\numberwithin{equation}{section}
\theoremstyle{plain}
\newtheorem{thm}{Theorem}
\newtheorem*{prop*}{Proposition}
\newtheorem{lem}{Lemma}[section]
\newtheorem{prop}[lem]{Proposition}
\newtheorem{cor}[lem]{Corollary}
\newcommand{\thmref}[1]{Theorem~\ref{#1}}
\newcommand{\lemref}[1]{lemma~\ref{#1}}
\newcommand{\propref}[1]{proposition~\ref{#1}}
\newcommand{\corref}[1]{corollary~\ref{#1}}
\theoremstyle{definition}
\newtheorem{rmk}[lem]{Remark}
\newtheorem{defi}[thm]{Definition}
\newcommand{\rmkref}[1]{remark~\ref{#1}}
\def\lf{\left\lfloor}   
\def\rf{\right\rfloor}
\newcommand{\tsum}{\textstyle\sum\limits}
\newcommand{\tprod}{\textstyle\prod\limits}
\newcommand{\D}{\sqrt{|D|}}
\newcommand{\mbf}{\mathbf}
\newcommand{\x}{\textbf}
\newcommand{\mf}{\mathbf}
\newcommand{\q}{\quad}
\newcommand{\qq}{\qquad}
\newcommand{\mc}{\mathcal}
\newcommand{\mrm}{\mathrm}
\newcommand{\sptwo}{\mrm{Sp}_2(\mf Z)}
\newcommand{\gnok}{\mc M_k(\mc{O}_K)}
\newcommand{\plamb}{\mrm{\Lambda}_+(\mc O_K)}
\newcommand{\bpm}{\begin{psmallmatrix}}
\newcommand{\bpml}{\begin{pmatrix}}
\newcommand{\epm}{\end{psmallmatrix}}
\newcommand{\epml}{\end{pmatrix}}
\newenvironment{psm}
  {\left(\begin{smallmatrix}}
  {\end{smallmatrix}\right)}
\begin{document}

\title[Determination of Hermitian cusp forms]{Distinguishing Hermitian cusp forms of degree $2$ by a certain subset of all Fourier coefficients}

\author{Pramath Anamby}
\address{Department of Mathematics\\
Indian Institute of Science\\
Bangalore -- 560012, India.}
\email{pramatha@iisc.ac.in, pramath.anamby@gmail.com}

\author{Soumya Das}
\address{Department of Mathematics\\
Indian Institute of Science\\
Bangalore -- 560012, India.}
\email{soumya@iisc.ac.in, soumya.u2k@gmail.com}

\subjclass[2010]{Primary 11F30, 11F55; Secondary 11F50} 
\keywords{Hermitian modular forms, square free, Fourier coefficients}

\begin{abstract}
We prove that Hermitian cusp forms of weight $k$ for the Hermitian modular group of degree $2$ are determined by their Fourier coefficients indexed by matrices whose determinants are essentially square-free. Moreover, we give a quantitative version of the above result. This is a consequence of the corresponding results for integral weight elliptic cusp forms, which are also treated in this paper.
\end{abstract}
\maketitle

\section{Introduction}

Recognition results for modular forms has been a very useful theme in the theory. We know that the Sturm's bound, which applies quite generally to a wide class of modular forms, says that two modular forms are equal if (in a suitable sense) their `first' few Fourier coefficients agree. Moreover, the classical multiplicity-one result for elliptic newforms of integral weight says that if two such forms $f_1,f_2$ have the same eigenvalues of the $p$-th Hecke operator $T_p$ for almost all primes $p$, then $f_1=f_2$. Even stronger versions are known, e.g., a result of D. Ramakrishnan \cite{dram} says that primes of Dirichlet density more than $7/8$ suffices.

However, when one moves to higher dimensions, say, to the spaces of Siegel modular forms of degree $2$ onwards, the situation is drastically different. Such a form which is an eigenfunction of the Hecke algebra does not necessarily have multiplicative Fourier coefficients, and multiplicity-one for eigenvalues (in a suitable sense, for $\sptwo$) is not known yet. However the Fourier coefficients, which are indexed by half-integral symmetric positive definite matrices, do determine a modular form. Thus one can still ask the stronger question whether a certain subset, especially one which consists of an arithmetically interesting set of Fourier coefficients, (say e.g., the primitive Fourier coefficients, i.e., those which are indexed by primitive matrices) already determines the Siegel cusp form. These may be considered as a substitute for a ``weak multiplicity-one", as Scharlau-Walling \cite{sch-wall} puts it, in the context of Fourier coefficients.

This line of investigation has attracted the attention of many mathematicians. As a first result in this direction, it was shown by D. Zagier \cite{zagier} that, the Siegel cusp forms of degree $2$ are determined by primitive Fourier coefficients. This has been generalized to Siegel and Hermitian cusp forms with levels and of higher degrees by S. Yamana \cite{yamana}. Similar results along this line, essentially distinguishing Siegel Hecke eigenforms of degree $2$ by the so-called `radial' Fourier coefficients (i.e., by certain subset of matrices of the form $mT$ with $T$ half-integral, $m \geq 1$), has been obtained in Breulmann-Kohnen\cite{br-ko}, Scharlau-Walling \cite{sch-wall}, Katsurada \cite{ibu-katsu}. A result of B. Heim \cite{heim} improves upon some of these results using differential operators on Siegel modular forms of degree $2$. More recently in \cite{saha}, \cite{sa-schm} A. Saha, R. Schmidt has proved that the Siegel cusp forms of degree $2$ are determined (in a quantitative way) by their fundamental (in fact by odd and square-free) Fourier coefficients. 

In this paper we take up the question of determining when two Hermitian cusp forms of degree $2$ on the full Hermitian modular group, which are not necessarily eigenforms, coincide when a certain subset of their Fourier coefficients are the same. This certain set is given explicitly in the theorem stated below, e.g., for $K=\mathbf{Q}(i)$, it consists of all square-free Fourier coefficients up to a divisor of $4$. Let $D_K<0$ be a fundamental discriminant such that $K=\mbf{Q}(\sqrt{D_K})$ has \textit{class number} $\mathit{1}$ (see \rmkref{classno1} for comments on this condition), and $\mc O_K$ be its ring of integers. Recall that in this case $D_K$ belongs to the following set $\{-4, -8, -3, -7, -11, -19, -43, -67, -163\}$.

Let $S_k(\mc O_K)$ denote the space of Hermitian cusp forms of degree $2$ and weight $k$ on the Hermitian modular group $\Gamma_2(\mc O_K)$. Each such cusp form $F$  has a Fourier expansion of the form (see sect.~\ref{defs} for the formal definitions)
\begin{align} \label{2.1}
F(Z) = \underset{T \in \plamb}\sum a(F,T) e \left( \mrm{tr}\text{ } TZ \right), \q \qquad \qquad (e(z) := e^{2 \pi i z} \text{ for } z \in \mf{C}),
\end{align}
where $\plamb := \{T \in M(2, \mf{C}) \mid T=\bar{T}' > 0, \ t_{\mu,\mu} \in \mf{Z}, \ t_{\mu,\nu} \in \tfrac{i}{\sqrt{|D_K|}} \mc O_K  \}$ is the lattice dual to the lattice consisting of $\mc O_K$-integral $2\times 2$ Hermitian matrices with respect to the trace form $\mrm{tr}$. Let us note here that (see sect.~\ref{defs}) $|a(F,T)|$ is invariant under the action $T \mapsto \overline{U}'TU$ ($U \in \mrm{GL}_2(\mc O_K)$), and that $|D_K|\det(T)$ is a positive integer. Further, let $p_{K}$ be the \textit{prime} such that $|D_K|=p_{K}^r$, for some $r\ge 1$. i.e., $p_K=|D_K|$ when $D_K$ is odd and $p_K=2$, when $D_K$ is even. We can now state the main results of this paper.
\begin{thm}\label{th:01}
Let $F\in S_k(\mc O_K)$ be non-zero. Then 
\begin{itemize}
\item [(a)] $a(F,T)\neq 0$ for infinitely many matrices $T$ such that $|D_K|\det(T)$ is of the form $p_K^{\alpha} n$, where $n$ is square-free with $(n,p_K)=1$ and $0\le\alpha\le 2$ if $D_K\neq -8$ and $0\leq \alpha \leq 3$ if $D_K=-8$.
\item[(b)] For any $\varepsilon>0$, \[\#\{0<n<X, \, n \text{ square-free}, (n,p_K)=1, a(F,T)\neq 0, p_K^{\alpha}n= |D_K|\det(T)\}\gg_{F,\varepsilon} X^{1-\varepsilon}.\]
\end{itemize} 
\end{thm}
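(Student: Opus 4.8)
The plan is to reduce the statement about Hermitian cusp forms of degree $2$ to the analogous (and, as the abstract indicates, separately proved) statements for elliptic cusp forms of integral weight, via a Fourier--Jacobi expansion. First I would expand $F \in S_k(\mc O_K)$ along its first Fourier--Jacobi coefficient: writing $Z = \begin{psmallmatrix} \tau & z \\ w & \tau' \end{psmallmatrix}$ with $\tau, \tau'$ in the upper half plane, one collects the terms in \eqref{2.1} according to the top-left entry $m$ of $T$, getting $F(Z) = \sum_{m \geq 1} \phi_m(\tau, z, w)\, e(m\tau')$, where each $\phi_m$ is a Hermitian Jacobi cusp form of weight $k$ and index $m$ (the relevant theory being the Hermitian analogue of Eichler--Zagier, as developed by Haverkamp and others). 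Since $F \neq 0$, some $\phi_m \neq 0$; fix the smallest such $m$, and note that because $K$ has class number $1$ we may assume after a $\mrm{GL}_2(\mc O_K)$-substitution that $m$ is as small and as arithmetically convenient (e.g.\ coprime to $p_K$, or equal to a small power of $p_K$) as the reduction theory of the relevant binary forms allows.

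Next I would pass from the Hermitian Jacobi form $\phi_m$ to elliptic modular forms through the theta decomposition: $\phi_m(\tau,z,w) = \sum_{\mu} h_\mu(\tau)\, \theta_{m,\mu}(\tau,z,w)$, where the sum runs over a finite set of residues $\mu$ modulo the relevant lattice scaled by $m$, the $\theta_{m,\mu}$ are Hermitian Jacobi theta series, and the vector $(h_\mu)$ is a (vector-valued) elliptic cusp form of weight $k-1$. The Fourier coefficients $a(F,T)$ with fixed top-left entry $m$ are, up to the theta-multiplier bookkeeping, exactly the Fourier coefficients of the $h_\mu$; crucially, the determinant condition $|D_K|\det(T) = p_K^\alpha n$ translates into a condition on the \emph{index} $4m \cdot (\text{discriminant of } T) $ of the Fourier coefficient of $h_\mu$ being of the shape $p_K^\alpha n$ (square-free part coprime to $p_K$) after absorbing the bounded factor coming from $m$ and from $|D_K|$ — this is the source of the allowed range $0 \le \alpha \le 2$ (resp.\ $\le 3$ for $D_K = -8$), which is precisely ``square-free up to a divisor of $|D_K|^2$'' (resp.\ of $4 \cdot |D_K|$). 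I would then invoke the elliptic result proved earlier in the paper: a non-zero elliptic cusp form of integral weight has non-vanishing Fourier coefficients at arguments that are square-free up to a bounded factor, with the quantitative lower bound $\gg_{F,\varepsilon} X^{1-\varepsilon}$ for their count below $X$; applying it to a non-zero component $h_{\mu_0}$ and tracking back through the theta decomposition yields both (a) and (b).

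The main obstacle I anticipate is the bookkeeping in the theta decomposition step: one must verify that a Fourier coefficient of $h_{\mu_0}$ at a square-free (up to $p_K$-part) argument genuinely corresponds to a Fourier coefficient $a(F,T)$ with $|D_K|\det(T)$ of the claimed form, and conversely that no unwanted common factor is forced by the congruence $\mu_0$ or by the index $m$. Concretely, the discriminant of $T$ relates to the Jacobi-form discriminant by $|D_K|\det(T) = mD' - |\text{off-diagonal}|^2 |D_K|$-type identities, and one needs a clean choice of $m$ (using class number $1$ to control the genus/class of the relevant quadratic form, and choosing $m$ coprime to $p_K$ when possible, or a minimal power of $p_K$ otherwise) so that ranging over the off-diagonal entries sweeps out all residues needed to hit square-free values; this is exactly where the case $D_K = -8$ behaves slightly differently and forces $\alpha \le 3$. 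A secondary technical point is ensuring the non-vanishing passes to a \emph{single} vector-valued component rather than being spread across components in a way that could conspire to cancel — but since the $\theta_{m,\mu}$ are linearly independent, $\phi_m \neq 0$ forces some $h_{\mu_0} \neq 0$, and the elliptic theorem applies to that component directly.
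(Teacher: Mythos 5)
Your overall strategy (Fourier--Jacobi expansion, theta decomposition, reduction to the elliptic result) is the same as the paper's, but the sketch has two genuine gaps, and the second is precisely the central difficulty the paper is organized around. First, the reduction to a convenient index $m$ is not justified: taking the smallest $m$ with $\phi_m\neq 0$ gives no control over $m$, and ``reduction theory plus class number one'' does not let you arrange $m$ coprime to $p_K$ or equal to a small prime power. The paper instead invokes Yamana's theorem to produce a \emph{primitive} $T$ with $a(F,T)\neq 0$ and then Lemma~\ref{indp} --- which rests on Iwaniec's theorem on primes represented by primitive quadratic polynomials in two variables --- to replace $T$ by a $\mrm{GL}_2(\mc O_K)$-equivalent matrix whose lower-right entry is an odd prime $p$. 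This is not cosmetic: all of the machinery downstream (the index-old theory of section~\ref{sec:indexold}) is developed only for prime index.

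Second, and more seriously, you claim that since the $\theta_{m,\mu}$ are linearly independent some component $h_{\mu_0}\neq 0$, ``and the elliptic theorem applies to that component directly.'' It does not. A single theta component is only one entry of a vector-valued modular form; it is not an element of $S_{k-1}(N,\chi)$ for any pair $(N,\chi)$ to which Theorem~\ref{th:02} (which crucially requires $N/m_\chi$ square-free) applies. To land in $S_{k-1}(N,\chi)$ one must form linear combinations of the $h_s$ --- the Eichler--Zagier map $\iota$ of \eqref{eq:h} or its twists $\iota_{\tilde{\eta}}$ of \eqref{eq:heta} --- and the key obstacle, emphasized in the introduction and made precise in Propositions~\ref{spez}, \ref{pnonsplit} and Remark~\ref{e-z}, is that $\iota$ can annihilate non-zero Hermitian Jacobi forms of prime index, unlike in the classical Eichler--Zagier setting. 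Bridging this gap requires the analysis of index-old forms (Propositions~\ref{prop:hs=0}, \ref{prop:p3mod4s}, \ref{prop:p3mod4}, \ref{prop:p1mod4}) culminating in Proposition~\ref{prop:finalprop}, together with the conductor control of Proposition~\ref{prop:hetanonzero}; none of this is present in, or implied by, your sketch. Finally, the admissible range of $\alpha$ (and the exceptional case $D_K=-8$) comes from the level-to-conductor ratio of the twisted images $h^F_{\tilde{\eta}}$, handled via Proposition~\ref{prop:genth2}, rather than from the discriminant identity you describe.
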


We say a few words about the proof of the theorem. We assume that $F\neq 0$ and via the Fourier-Jacobi expansion of $F$, reduce the question to Hermitian Jacobi forms of prime index in section~\ref{sec:redhjf}, thanks to a theorem of H. Iwaniec. The standard avenue now would be to pass on to the integral weight forms by using the injectivity of the so-called Eichler-Zagier map (which is essentially the average of all theta components of a Jacobi form). However we stress here that the possibility of this passage to the integral weight forms turns out to be rather non-trivial in our case. 

\textsl{The main point is that even in the case of prime indices, the Eichler-Zagier map (see \eqref{eq:h} for the definition) may not be injective}; unlike the scenario for the classical Jacobi forms. The only result known in this regard is from \cite{haverkamp} that such a map is injective on a certain subspace $\mc J^{spez}_{k,p}(\mc O_K)$ ($p$ prime, see section \ref{subsec:operators}). Moreover \lemref{lem:vpinjective}, \propref{spez} in section~\ref{inj} show that $\mc J^{spez}_{k,p}(\mc O_K)$ may be a proper subspace of $\mc J_{k,p}(\mc O_K)$ and the Eichler-Zagier map may fail to be injective in the complementary space (see remark~\ref{e-z}). 

\textsl{The heart of this paper is devoted to overcome such an obstacle, this is at the same time the second main topic of the paper, treated in detail in section~\ref{hjf}}. Given that our aim is to reduce the question to $S_{k}(N,\chi)$ (the space of cusp forms of weight $k$ on $\Gamma_0(N)$ with character $\chi$) which are pleasant to work with, we consider a `collection' of Eichler-Zagier maps $\iota_\xi$ indexed by suitable characters $\xi$ of the group of units of the ring $\mc O_K/i\sqrt{|D_K|}p \mc O_K$, see section~\ref{sec:indexold} for more details. Each $\iota_\xi$ do map $\mc J^{cusp}_{k,p}(\mc O_K)$ to $S_k(N,\chi)$ for certain $N$ and $\chi$ (see section \ref{sec:introhjf}). Working with this collection of maps, we show that

$(i)$ if the index $p$ of the Hermitian Jacobi form $\phi_p$ at hand is inert in $\mc O_K$, then this `collection' $\{\iota_\xi\}_\xi$ defines an injective map, and 

$(ii)$ if $p$ splits, then either this `collection' is injective or that $\iota$ itself is injective. For this, we have to develop a part of the theory of index-old Hermitian Jacobi forms of index $p$ \`{a} la Skoruppa-Zagier in \cite{skoruppa-zagier}. See section~\ref{sec:indexold}.

Finally $(i)$ and $(ii)$ allow us to reduce the problem to the following theorem on $S_k(N,\chi)$ (the space of cusp forms on $\Gamma_0(N)$ with character $\chi$) for certain $N$ and $\chi$. Results somewhat similar to this have been obtained by Yamana \cite{yamana}, but his results does not imply ours. Thus as far as we know, the following result is not available in the literature. We assume $\chi(-1)=(-1)^k$, so that $S_k(N,\chi)\neq \{0\}$.
\begin{thm}\label{th:02}
Let $\chi$ be a Dirichlet character of conductor $m_\chi$ and $N$ be a positive integer such that $m_\chi|N$ and $N/m_\chi$ is square-free.
\begin{itemize}
\item[(a)] If $f\in S_k(N,\chi)$ and $a(f,n)=0$ for all but finitely many square-free integers $n$. Then $f=0$.
\item[(b)] Let $f\in S_k(N,\chi)$ and $f\neq 0$, then for any $\epsilon>0$ \[\#\{0<n<X, \text{n square-free},a(f,n)\ne 0\}\gg_{f,\epsilon} X^{1-\epsilon}.\]
\end{itemize} 
\end{thm}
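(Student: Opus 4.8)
\textbf{Proof proposal for \thmref{th:02}.}

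The plan is to reduce both statements to a multiplicity-one-type input for newforms and then control the square-free support via Rankin--Selberg / sieve arguments. First I would decompose $f\in S_k(N,\chi)$ along the newform basis: write $f=\sum_i c_i\, g_i(d_i z)$ where each $g_i$ is a newform of some level $M_i\mid N$ and $d_i M_i\mid N$. Grouping by the underlying newform $g$, the part of $f$ coming from $g$ is $f_g=\sum_{d} c_{g,d}\, g(dz)$, a finite combination of shifts. The key structural point is that, because $N/m_\chi$ is square-free, the ``shift parameter'' $d$ and the conductor $m_\chi$ interact mildly; in particular for a square-free $n$ with $(n, m_\chi)$ small, $a(g(dz),n)=a(g,n/d)$ vanishes unless $d\mid n$, and since $n$ is square-free, $d$ is then square-free and $(n/d)$ remains almost square-free. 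This lets me extract, for a fixed newform $g$, the leading shift and isolate the coefficient $a(g,n)$ along square-free $n$ coprime to a fixed modulus.

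\smallskip

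Next, for part (a): suppose $a(f,n)=0$ for all but finitely many square-free $n$. For each newform $g$ occurring, restricting to square-free $n$ in a fixed residue class coprime to $N$ forces a linear recursion among the $c_{g,d}$ with $d\mid n$; letting $n$ range over a suitable family of square-free integers (e.g. $n=\ell$ prime, then $n=\ell_1\ell_2$, etc.) and using that a newform has $a(g,\ell)\neq 0$ for a density-one set of primes $\ell$ (Rankin--Selberg: $\sum_{\ell\le X}a(g,\ell)^2/\ell^{k-1}\gg \log X$, so $a(g,\ell)\neq 0$ infinitely often, and one can choose $\ell$ avoiding any finite bad set), I conclude all $c_{g,d}=0$. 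Distinct newforms are separated because two newforms cannot share $a(g,\ell)$ for a positive-density set of primes (strong multiplicity one / Rankin--Selberg of $g_1\times\bar g_2$). Hence $f=0$.

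\smallskip

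For part (b), the quantitative statement, I would run a Rankin--Selberg estimate restricted to square-free arguments. After isolating a single newform $g$ (by (a) we may assume $f\ne 0$ corresponds to at least one $g$, and by subtracting lower shifts reduce to studying $a(g,n)$ times a nonzero constant along square-free $n$ coprime to $N$), consider $\sum_{n\le X}^{\flat} |a(g,n)|^2 n^{-(k-1)}$ over square-free $n$ with $(n,N)=1$. This is $\gg X/\log X$ by the standard lower bound for the (square-free-sieved) Rankin--Selberg sum together with Deligne's bound $|a(g,n)|\ll_\varepsilon n^{(k-1)/2+\varepsilon}$ to bound the number of nonzero terms from below: $\#\{n\le X:\ a(f,n)\ne 0\}\gg X^{1-\varepsilon}$. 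To pass from $a(g,n)\ne 0$ to $a(f,n)\ne 0$ I would use the triangular structure of the shifts once more: for square-free $n$, $a(f,n)=\sum_{d\mid n} c_{g_{*},d} a(g_{*},n/d)+(\text{contributions of other newforms at }n/d)$; choosing $n$ to avoid the finitely many problematic small divisors and using multiplicity one to prevent cancellation across distinct newforms, the sum is a nonzero multiple of $a(g_*,n)$ for $n$ in a positive-proportion subset. The extraction of a clean nonzero main term from this finite linear combination is the step I expect to be the main obstacle: one must ensure that the various shifted newform contributions do not conspire to cancel along the square-free family, which requires combining strong multiplicity one with a careful choice of the square-free sieve (e.g. restricting $n$ to be coprime to $N$ and to lie in a fixed arithmetic progression, and using the fact that $N/m_\chi$ square-free keeps all relevant $d$ square-free and coprime in pairs). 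Once that linear-algebra/cancellation issue is handled, the density lower bound follows from the Rankin--Selberg input exactly as in the works of Saha and of Kowalski--Robert--Wu on square-free-supported coefficients.
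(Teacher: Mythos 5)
Your part (a) is essentially the paper's argument: decompose $f$ into shifted newforms $g(d\tau)$, observe that $m_\chi\mid M_g$ and $dM_g\mid N$ force every shift $d$ to divide the square-free integer $N/m_\chi$ (this, rather than the triviality that divisors of a square-free $n$ are square-free, is where the hypothesis enters --- it guarantees that no component is supported entirely on non-square-free indices, as $g(m^2\tau)$ would be), separate the newforms using primes where their eigenvalues differ, and extract a nonvanishing coefficient at a square-free integer of the form $\gamma_{j_0}\delta_1 p$. Two small corrections: distinct newforms \emph{can} agree at a positive density of primes (quadratic twists of one another do), so you may only invoke that they differ at infinitely many primes, which is all that is needed; and the ``linear recursion'' you allude to is carried out concretely in the paper by applying the operators $T_{p_i}-b_j(p_i)$ to kill all but one newform and then $U_{\delta_1}$ to strip the minimal shift --- you should make that step explicit rather than gesture at a family of square-free $n$.

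Part (b) has a genuine gap, and it is exactly the one you flag yourself: after isolating a single newform $g_*$ you must show that the finite linear combination $\sum_{d\mid n}c_{g_*,d}\,a(g_*,n/d)+\cdots$ does not cancel along a positive proportion of square-free $n$, and you give no argument for this. The paper avoids the problem entirely by never reducing part (b) to a newform. It first sieves $f$ prime by prime (using the square-freeness of $N/m_\chi$ at each step to see that the sieved form cannot vanish) down to a nonzero $g\in S_k(N',\chi)$ with $a(g,n)=0$ whenever $(n,N')>1$, and then runs the M\"obius sieve $\sum_{r^2\mid n}\mu(r)$ directly on the second moment of $g$ itself. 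The resulting error terms $\sum_{m\le X/r^2}|a'(g,mr^2)|^2$ are controlled by the explicit Petersson-norm bound $\langle U_{r^2}g,U_{r^2}g\rangle\le 19^{\omega(r)}r^{2k-2}\langle g,g\rangle$, proved via orthogonality of the $U_{r^2}$-images of distinct Hecke eigenforms, and an auxiliary modulus $M$ containing all primes below $87$ makes the main term positive. This uniform control of $A_{g,r^2}$ over all square-free $r$ is the technical heart of part (b); your proposal replaces it with an appeal to ``the standard square-free-sieved Rankin--Selberg lower bound,'' which for a general element of $S_k(N,\chi)$ (as opposed to a single newform) is precisely what has to be proved.
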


Clearly, part~$(a)$ of \thmref{th:02} follows from part ~$(b)$, however we include an independent proof of part~$(a)$ using an argument adapted from the work of Balog-Ono \cite{BalogOno}, which we feel is worth noting and the method could be useful in other circumstances. In a nutshell and loosely speaking, this method allows one to reduce to the case of newforms. In either of the proofs, the condition on the ratio of the level and conductor is necessary, this can be seen by taking the example of a non-zero form $g(\tau) \in S_k(\mrm{SL}_2(\mf Z))$ and consider $g(m^2 \tau)$ for some $m > 1$. The proofs of these results are given in section~\ref{int}. Let us mention here that motivated by \thmref{th:02} and with the same hypotheses, very recently we could prove that there exists a constant $B$ depending only on $k,N$ such that if $a_f(n)=0$ for all square-free $n \leq B$, then $f=0$.

For the proof of part~$(b)$, we essentially consider the cusp form obtained from a given form by sieving out squares and then apply the Rankin-Selberg method to get asymptotics of the second moment of its Fourier coefficients; the details are rather technical, see sections~\ref{2mom} and~\ref{partb}. Along the way, we present some nice calculations on the Petersson norms of $U_{r^2}f$, which arise as a part of the main term in the asymptotic alluded to above, with $f$ as in the theorem, and which extends the results of \cite{brown}.

Finally we remark that with some modifications, one expects to extend our results to the corresponding spaces of Eisenstein series as well; it could be interesting to work this out.

\noindent\textbf{Acknowledgements.} It is a pleasure to thank Prof. S. B\"ocherer for his comments and encouragement about the topic of the paper. The first author is a DST- INSPIRE Fellow at IISc, Bangalore and acknowledges the financial support from DST (India). The second author acknowledges financial support in parts from the UGC Centre for Advanced Studies, DST (India) and IISc, Bangalore during the completion of this work.
\section{Notation and terminology}
We mostly follow standard notation throughout the paper:  $M(n, R)$ denotes, as usual, the space of $n \times n$ matrices over a commutative ring $R$; for $A \in M(n, \mf{C})$, $A^{*} := \bar{A}'$, with $A'$ denoting the transpose of $A$; $A$ is Hermitian if $A=A^*$ and is positive definite (resp. semi--definite) if $\xi^* A \xi > 0$ (resp. $\geq 0$) for all $\xi \in \mf{C}^n \backslash \{0\}$. 

\subsection{Hermitian modular forms} \label{defs}
We define the unitary group of degree $2$ as
\[   U(2,2) := \{ M \in \mrm{GL}(4, \mf{C}) \mid \bar{M}' J M = J  \},   \]
where $J = \left( \begin{smallmatrix} 0 & -I_2 \\ I_2 & 0  \end{smallmatrix}  \right)$. We recall the Hermitian upper half-space of degree 2 on which most of the holomorphic functions in this paper live:
\[  \mc{H}_2 := \{ Z  \in M(2,\mf{C}) \mid (Z-Z^*)/{2i} > 0 \}.  \]

Let $D_K$ be a fundamental discriminant and $K$ denote an imaginary quadratic field of discriminant $D_K$, i.e., $K = \mf{Q}(\sqrt{D_K})$. The class number of $K$ is assumed to be $1$. Denote the ring of integers of $K$ by $\mc \mc \mc O_K$ and the order of the unit group $\mc \mc \mc O_K^{\times}$ of $\mc \mc \mc O_K$ by $w(D_K)$. The inverse different of $K$ is denoted by \[ \mc \mc \mc O_K^{\#} := \tfrac{i}{\sqrt{|D_K|}} \mc \mc \mc O_K. \]
We denote by $ \Gamma_2( \mc \mc \mc O_K)$ the Hermitian modular group of degree $2$ defined by
\[  \Gamma_2(\mc \mc \mc O_K) := U(2,2) \cap M(4, \mc \mc \mc O_K). \]

Given an integer $k$, the vector space of Hermitian modular forms of degree 2 and weight $k$ consists of all holomorphic functions $f \colon \mc{H}_2 \rightarrow \mf{C}$ satisfying 
\[ f(Z) = \det (CZ+D)^{-k} f(M \langle Z \rangle) \q  \text{for all  } Z \in \mc{H}_2,\text{ } M=\begin{psm}A&B\\C&D\end{psm} \in \Gamma_2(\mc \mc \mc O_K). \]
where $M \langle Z \rangle:=(AZ+B)(CZ+D)^{-1}$. The vector space of Hermitian modular forms of degree $2$ (with respect to $K$) and weight $k$ is denoted by $\gnok$. Further, those forms in $\gnok$ which have Fourier expansion as in \eqref{2.1} are cusp forms and the subspace of all cusp forms is denoted by $S_k(\mc O_K)$.

Moreover, following Yamana \cite{yamana} let us define the content $c(T)$ of a matrix $T \in \plamb$ by
\[ c(T) := \max \{ a \in \mf N \mid a^{-1} T \in \plamb \}. \]
$T\in \plamb$ is called primitive, if $c(T)=1$.

Expanding an $F \in S_k(\mc O_K)$ along the Klingen parabolic subgroup, we can write its Fourier-Jacobi expansion as
\begin{equation}\label{eq:1}
F(Z)=\underset{m\ge 1}{\sum}\phi_{m}(\tau,z_{1},z_{2})e(m\tau'),
\end{equation}
where $Z=\begin{psmallmatrix}
\tau & z_{1}\\
z_{2} & \tau'
\end{psmallmatrix}$ and for each $m\ge 1$, the Fourier-Jacobi coefficient $\phi_{m} \in \mc{J}_{k,m}^{cusp}(\mc \mc \mc O_K)$ with
\begin{equation}\label{eq:2}
\phi_{m}(\tau,z_{1},z_{2})=\underset{nm>N(r)}{\underset{n\in \mathbf{Z},r\in \mc \mc \mc O_K^{\#}}{\sum}}a\left( F, \begin{psmallmatrix}
n & r\\
\overline{r} & m
\end{psmallmatrix}\right) e(n\tau + \overline{r}z_{1} + rz_{2}),
\end{equation}
where $N(\cdot)$ is the norm function of $K$ and $\mc{J}_{k,m}^{cusp}(\mc \mc \mc O_K)$ is the space of Hermitian Jacobi cusp forms for the group $\Gamma^J(\mc \mc \mc O_K)$ (see next section for details).
\subsection{Hermitian Jacobi forms}\label{sec:introhjf}
\subsubsection*{The Hermitian-Jacobi group:} Let $S^1$ denote the unit circle. Then the set $\mbf{C}^2\times S^1$ is a group with the following twisted multiplication law, which we would use freely throughout the paper. \[ [(\lambda_1, \mu_1),\xi_1]\cdot[(\lambda_2, \mu_2),\xi_2]:=[(\lambda_1+\lambda_2, \mu_1+\mu_2),\xi_1\xi_2\ e(2Re(\overline{\lambda_1}\mu_2))].\]
The group $U(1,1)=\{\varepsilon M\mid \varepsilon\in S^1,M\in\mrm{SL}_2(\mbf R)\}$ acts on $\mbf{C}^2\times S^1$ as\[[(\lambda, \mu),\xi](\varepsilon M):=[(\overline{\varepsilon}\lambda, \overline{\varepsilon}\mu)M, \xi e(abN(\lambda)+cdN(\mu)+2bcRe(\overline{\lambda}\mu))].\]
Let $\mc{G}^J$ denote the semi-direct product $U(1,1)\ltimes (\mbf{C}^2\times S^1)$. The multiplication in $\mc{G}^J$ is given by \[[\varepsilon_1 M_1,X_1][\varepsilon_2 M_2,X_2]=[\varepsilon_1\varepsilon_2 M_1M_2,(X_1(\varepsilon_2M_2))\cdot X_2].\]
$\mc{G}^J$ acts from left on $\mc{H}\times\mbf{C}^2$ and from right on functions $\phi:\mc{H}\times\mbf{C}^2\longrightarrow \mbf{C}$. These actions are given by individual actions of $U(1,1)$ and $\mbf{C}^2\times S^1$ as below.
\begin{equation}\label{eq:gpact1}
\begin{aligned}
\varepsilon M (\tau,z_1,z_2)&:=(M\tau,\tfrac{\varepsilon z_1}{c\tau+d},\tfrac{\overline{\varepsilon} z_2}{c\tau+d}).\\
[(\lambda\ \mu ),\xi](\tau,z_1,z_2)&:=(\tau,\ z_1+\lambda \tau+\mu,\ z_2+\overline{\lambda} \tau+\overline{\mu}).
\end{aligned}
\end{equation}
\begin{equation}\label{eq:gpact2}
\begin{aligned}
(\phi|_{k,m} \varepsilon M)(\tau,z_{1},z_{2}) &:= \varepsilon^{-k}(c\tau + d)^{-k} e^{\frac{-2\pi i m c z_{1}z_{2}}{c\tau + d}} \phi \left(M\tau, \tfrac{\varepsilon z_{1}}{c\tau + d},\tfrac{\bar{\varepsilon} z_{2}}{c\tau + d}\right).\\
(\phi|_{m}[(\lambda\ \mu),\xi])(\tau,z_1,z_2) &:=\xi^m e^{2 \pi i m (N(\lambda)\tau + \overline{\lambda}z_{1}+\lambda z_{2})}
\phi (\tau,z_{1} + \lambda \tau + \mu, z_{2} + \overline{\lambda} \tau + \overline{\mu}).
\end{aligned}
\end{equation}
Here $ M = \left( \begin{smallmatrix}a & b \\ c & d  \end{smallmatrix}\right) \mbox{ in } \mrm{SL}_2(\mathbf{R})$, $M\tau=\tfrac{a\tau+b}{c\tau+d}$ and $k,m\in\mbf{Z}$.

The Hermitian-Jacobi group $\Gamma^J(\mc O_K)$ is defined as $\Gamma^J(\mc{O}_K):= \Gamma_1(\mc{O}_K) \ltimes \mc{O}_K^{2}$, where \[ \Gamma_1(\mc{O}_K) := \{ \varepsilon \mrm{SL}_2(\mathbf{Z}) \mid \epsilon \in  \mc{O}_K^\times \}\subset U(1,1)\] and $\mc{O}_K^2=\{(\lambda,\mu)\mid \lambda,\mu\in \mc{O}_K\}$ is the subgroup of $\mbf{C}^2\times S^1$ with component wise addition (here $(\lambda,\mu)$ is identified with $[(\lambda,\mu),1]$).

For positive integers $k$ and $m$, the space of Hermitian Jacobi forms of weight $k$ and index $m$ for the group $\Gamma^{J}(\mc \mc \mc O_K )$ consists of holomorphic functions $\phi$ on $ \mathcal{H} \times \mathbf{C}^{2}$ such that (see \cite{haverkamp})
\begin{enumerate}
\item $\phi|_{k,m}\gamma=\phi, \qq \text{ for all } \gamma\in  \Gamma^J(\mc O_K)$.

\item $\phi$ has a Fourier expansion of the form
\begin{equation*}
\phi(\tau,z_{1},z_{2}) = \sum_{n = 0}^{\infty} \underset{\underset{nm \geq N(r)}{r \in \mc \mc \mc O_K^{\#}}}\sum c_{\phi}(n , r) e\left( n \tau + r z_{1} + \overline{r} z_{2} \right).
\end{equation*}
\end{enumerate}
The complex vector space of Hermitian Jacobi forms of weight $k$ and index $m$ is denoted by $\mc{J}_{k,m}(\mc \mc \mc O_K )$. Moreover, if $ c_{\phi}(n , r) = 0$ for $nm = N(r)$, then $\phi$ is called a Hermitian Jacobi \textit{cusp} form. The space of Hermitian Jacobi cusp forms of weight $k$ and index $m$ is denoted by $\mc{J}_{k,m}^{cusp}(\mc{O}\sb{K} )$. 

For the rest of the paper, for the sake of simplicity we just write $\mc{O}$ instead of $\mc{O}_K$, $D$ instead of $D_K$ and $\mc{J}_{k,m}$ instead of $\mc{J}_{k,m}(\mc \mc \mc O_K )$. Since $\mc \mc \mc O_K^{\#} = \tfrac{i}{\D} \mc \mc \mc O_K$, if $\phi\in \mc{J}_{k,m}$ we can rewrite the Fourier expansion of $\phi$ equivalently as
\begin{equation}\label{fourier}
\phi(\tau,z_{1},z_{2}) = \sum_{n = 0}^{\infty} \underset{|D| nm \geq N(r)}{\sum_{r\in \mc{O}}} c_{\phi}(n , r) e\big( n \tau + \tfrac{i r}{\D} z_{1} + \tfrac{\overline{i r}}{\D} z_{2} \big).
\end{equation}

\subsubsection*{Theta decomposition} As in the case of classical Jacobi forms, Hermitian Jacobi forms admit a theta decomposition. Let $\phi \in \mc{J}_{k,m}$ has the Fourier expansion as in (\ref{fourier}). Then we have 
\begin{equation}\label{eq:thetadec}
\phi(\tau,z_{1},z_{2}) = \underset{s \in \mc{O}/i\D m \mc{O}}\sum h_{s}(\tau) \cdot \theta_{m,s}(\tau,z_{1},z_{2}),
\end{equation}
where, for $s$ as above
\begin{align}
\label{def:theta}\theta_{m,s}(\tau,z_{1},z_{2})&:= \underset{r \equiv s\pmod{i\D m}}\sum e \big(\tfrac{N(r)}{|D|m}\tau + \tfrac{ir}{\D}z_{1}+\tfrac{\overline{ir}}{\D}z_{2} \big).\\
\label{def:hs}h_{s}(\tau)&:=\underset{N(s)+n\in |D|m\mathbf{Z}}{\sum_{n>0}}c\big(\tfrac{n+N(s)}{|D|m},s\big)e(n\tau /|D|m).
\end{align}
The theta components $h_{s}$ of $ \phi \in \mc{J}_{k,m}$ (see \cite{haverkamp, sasaki}) have the following transformation properties under $\mrm{SL}_2(\mbf{Z})$ and $\mc{O}^{\times}$:
\begin{align}
h_s(\tau+1)&=e\Big(-\tfrac{N(s)}{|D|m}\Big)h_s.\\
\label{eq:units}\epsilon^kh_{\epsilon s}(\tau)&=h_s(\tau) \q\text{ where } \epsilon\in \mc{O}^\times.\\
\label{hinversion}h_s(-\tau^{-1})&=\tfrac{i}{\D m}\tau^{k-1}\sum_{r\in \mc{O}/i\D m\mc{O}}e\left(\tfrac{2Re(s\overline{r})}{|D| m}\right)h_r(\tau).
\end{align}

Let $\chi_{D}:=\left(\frac{D}{\cdot}\right)$, the unique real primitive Dirichlet character mod $|D|$. Then for any $M=\begin{psm}a&b\\ c&d\end{psm}\in \Gamma_0(m|D|)$ and $J=\begin{psm}0&1\\ -1&0\end{psm}$, we have
\begin{equation}\label{eq:transtheta}
\theta_{m,s}\mid _{1,s}MJ=\tfrac{i\chi_D(d)}{m\D}\sum_{s'\in \mc{O}/i\D m\mc{O}}e(a(bN(s)+2Re(\overline{s}s'))/|D|m)\theta_{m,s'}.
\end{equation}

\subsubsection*{An exponential sum} For $K,D$ as above, we would encounter the following exponential sum. Its evaluation is standard, so we just state it.
\begin{align} \label{expo}
\underset{r\in \mc{O}/ s \mc{O}}{\sum}e \big(2Re\big(\tfrac{ irx}{\D s} )\big)= \begin{cases}  N(s), & \text{if  } x \in s\mc O; \\ 0, & \text{otherwise} . \end{cases}  
\end{align}
\subsubsection*{Eichler-Zagier maps} Using the theta decomposition for $\phi\in\mc{J}_{k,m}$ as in (\ref{eq:thetadec}) define the Eichler-Zagier map $\iota:\mc{J}_{k,m}\longrightarrow S_{k-1}(|D|m,\chi_{D})$ by $\iota(\phi)=h$, where (see \cite{ez} for the classical case and \cite{haverkamp} for more details)
\begin{equation}\label{eq:h}
h(\tau):=\sum_{s\in \mc{O}/i\D m\mc{O}}h_s(|D|m\tau).
\end{equation}

Let $\mathcal{D}=\D i\mc{O}$ denote the different of $\mathbf{Q}(\sqrt{D})$ and define the subgroup $G$ of $\left(\mc{O}/i\D m\mc{O}\right)^\times$ by
\begin{equation} \label{Gdef}
G:=\{\mu+m\mathcal{D} \mid N(\mu)\equiv 1 \mod {|D|m}\}.
\end{equation}
Let $\eta :G\longrightarrow \mathbf{C}$ be any character of $G$ such that $\eta(\varepsilon)=\varepsilon^{-k}$ for all $\varepsilon\in \mc{O}^\times$. Let $\tilde{\eta}$ be an extension of $\eta$ to $(\mc{O}/i\D m\mc{O})^\times$. Now define the \textit{twisted} Eichler-Zagier map $\iota_{\tilde{\eta}}:\mc{J}_{k,m}\longrightarrow S_{k-1}(2f|D|m,\chi_{D}\cdot\overline{\tilde{\eta}})$ by $\iota_{\tilde{\eta}}(\phi)=h_{\tilde{\eta}}(\tau)$, where
\begin{equation}\label{eq:heta}
h_{\tilde{\eta}}(\tau):=\sum_{s\in \mc{O}/i\D m\mc{O}}\overline{\tilde{\eta}(s)}h_s(|D|m\tau)
\end{equation}
and $f \in \mf Z \cap i\D m\mc{O}$. We choose $f$ to be the minimal such positive integer, so that $f=|D|m$ when $D$ is odd and $f=\frac{|D|m}{2}$ when $D$ is even. For the convenience of the reader we indicate how one can prove that $h_{\tilde{\eta}}\in S_{k-1}(2f|D|m, \chi_{D}\cdot \overline{\tilde{\eta}})$. 

Namely, for any $M=\begin{psm}a&b\\ 2cf|D|m&d\end{psm}\in \Gamma_0(2f|D|m)$ and $f$ as above, 
\begin{equation}\label{eq:hetatildetr}
h_{\tilde{\eta}}\mid_{k-1}M(\tau)=\sum_{s\in \mc{O}/i\D m\mc{O}}\overline{\tilde{\eta}(s)}\Big(h_s\mid_{k-1}\begin{psm}a&b|D|m\\ 2cf&d\end{psm}\Big) (|D|m\tau).
\end{equation}
Now using the transformation formula (\ref{eq:transtheta}) for $\theta_{m,s}$ , we have 
\begin{align*}
h_s\mid_{k-1}\begin{psm}a&b|D|m\\ 2cf&d\end{psm}&=h_s\mid_{k-1}J\begin{psm}-d&2cf\\ b|D|m&-a\end{psm}J\\
&=\tfrac{\chi_D(d)}{|D|m^2}\sum_{s',s''\in \mc{O}/i\D m\mc{O}}e((2cdfN(s')-2Re(\overline{s}s')+2dRe(\overline{s''}s'))/|D|m)h_{s''}.
\end{align*}
Using this in (\ref{eq:hetatildetr}) and evaluating the exponential sum over $s'$ from \eqref{expo} we infer that $h_{\tilde{\eta}}\in S_{k-1}(2f|D|m, \chi_{D}\cdot \overline{\tilde{\eta}})$.

\subsubsection{Decomposition of $\mc {J}_{k,m}$.}\label{sec:decomp}
For $\mu\in \mc O$ with $N(\mu)\equiv 1\pmod {m|D|}$ define
\begin{equation*}
W_\mu(\phi):=\underset{s \in \mc{O}/i\D m \mc{O}}\sum h_{\mu s}(\tau) \cdot \theta_{m,s}(\tau,z_{1},z_{2}),
\end{equation*}
where $\phi\in\mc{J}_{k,m}$ and has theta decomposition as in (\ref{eq:thetadec}). Then $W_\mu$ is an automorphism of $\mc{J}_{k,m}$.

Let $G$ be the group defined as above. Then the map $G\rightarrow End(\mc{J}_{k,m})$, $\mu\mapsto W_\mu$ is a homomorphism. Now as in the case of classical Jacobi forms we can decompose $\mc{J}_{k,m}$ as
\begin{equation*}
\mc{J}_{k,m}=\underset{\eta}{\oplus}\mc{J}_{k,m}^\eta,
\end{equation*}
where $\eta$ is a character of $G$ as above and
\begin{equation*}
\mc{J}_{k,m}^\eta:=\{\phi\in\mc{J}_{k,m}|W_\mu(\phi)=\eta(\mu)\phi \text{ for all } \mu\in G\}.
\end{equation*}
Now let $\eta_0$ is the trivial character of $G$. For $\eta\neq \eta_0$, let $\phi\in \mc{J}_{k,m}^{\eta}$. Then we have $W_\mu(\phi)=\eta(\mu)\phi$ for all $\mu\in G$. That is $h_{\mu s}=\eta(\mu)h_s$ for all $\mu \in G$. Note that $\mu$ is an unit in $\mc{O}/i\D m\mc{O}  $. Thus if $h=\iota(\phi)$ is defined as in (\ref{eq:h}), then 
\begin{align*}
h(\tau)&=\sum_{s\in \mc{O}/i\D m\mc{O}}h_s(|D|m\tau)\underset{s\mapsto\mu s}{=}\sum_{s\in \mc{O}/i\D m\mc{O}}h_{\mu s}(|D|m\tau)\\
&=\eta(\mu)\sum_{s\in \mc{O}/i\D m\mc{O}}h_s(|D|m\tau)=\eta(\mu)h(\tau).
\end{align*}
Since $\eta\neq\eta_0$, we have $h=0$. This implies that $\underset{\eta\neq\eta_0}{\oplus}\mc{J}_{k,m}^\eta\subset ker(\iota)$. Similarly for any non trivial character $\eta$ of $G$ it follows that $\underset{\eta '\neq\eta}{\oplus}\mc{J}_{k,m}^{\eta '}\subset ker(\iota_{\tilde{\eta}})$.

\subsection{Elliptic modular forms}
For a positive integers $k$, $N$ and a Dirichlet character $\chi \mod N$, let $S_{k}(N,\chi)$ denote the space of cusp forms of weight $k$ and character $\chi$ for the group $\Gamma_{0}(N)$.

For $f\in S_{k}(N,\chi)$ we write its Fourier expansion as
\begin{equation*}
f(\tau)=\sum_{n=1}^{\infty}a'(f,n)n^{\frac{k-1}{2}}e(n\tau),
\end{equation*}
so that by Deligne \cite{deligne}, we have the estimate for any $\varepsilon>0$:
\begin{align}\label{eq:deligne}
|a'(f,n)| \ll_{\varepsilon,f} n^{\varepsilon}.
\end{align}
For a positive integer $n$ with $(n,N)=1$, the Hecke operator $T_n$ on $S_{k}(N,\chi)$ is defined by
\begin{equation}\label{hecke}
T_nf=n^{\frac{k}{2}-1}\underset{a>0}{\sum_{ad=n}}\chi(a)\sum_{b=0}^{d-1}f|\begin{psm}
a & b\\
0 & d
\end{psm}.
\end{equation}
For any $n$, the operator $U_n$ is defined as
\begin{equation}\label{heckeUn}
U_nf=n^{\frac{k}{2}-1}\sum_{b=0}^{n-1}f|\begin{psm}
1 & b\\
0 & n
\end{psm}.
\end{equation}

The space $S_k(N,\chi)$ is endowed with Petersson inner product defined by
\begin{equation}
\left\langle f,g\right\rangle_N=\int_{\Gamma_{0}(N)\backslash\mc{H}} f(\tau)\overline{g(\tau)}y^{k-2}dxdy.
\end{equation}

\section{Proof of \thmref{th:01}}\label{sec:redhjf}
\subsection{Reduction to Hermitian Jacobi forms.}\label{sec:thm1}
In order to prove \thmref{th:01}, as is quite natural (see also \cite{saha}) we first reduce the question to the setting of Hermitian Jacobi forms of prime index. This would be possible, as is explained later, if we could show that any matrix in $\Lambda_{+}(\mc{O})$ is equivalent to one with the right lower entry an odd prime. The following lemma allows us to do that. To prove the lemma, we crucially use the following very non-trivial result due to H. Iwaniec \cite{iwa} on primes represented  by a general primitive quadratic polynomial of $2$ variables, stated in a way to suit our need.
\begin{thm} \label{life}
Let $P(x,y)= Ax^2 +Bxy + Cy^2 + Ex + Fy + G  \in \mf Z[x,y]$ be such that $(A,B,C,E,F,G)=1$. If $P$ is irreducible in $\mf Q[x,y]$ and represents arbitrarily large odd integers and depend essentially on two variables, then it represents infinitely many odd primes.
\end{thm}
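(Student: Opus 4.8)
This is a theorem of H.~Iwaniec \cite{iwa}; the plan of proof I would follow is a sieve argument, and I describe its main steps. \emph{Reductions and set-up.} First one normalises $P$ by an affine substitution over $\mf{Q}$, keeping track of the coprimality of the coefficients and of the existence of odd values. If the discriminant $\Delta:=B^2-4AC$ of the leading part vanishes, that part is a square $(\alpha x+\beta y)^2$; irreducibility together with genuine dependence on two variables then forces $P$, after such a substitution, to have the shape $u^2+(av+b)$ with $a\ne 0$, and fixing $u$ while letting $v$ run over a suitable arithmetic progression reduces the claim to Dirichlet's theorem. So one may assume $\Delta\ne 0$. A further rational affine change brings $P$ either to a primitive integral binary quadratic form $Q$ of discriminant $\Delta$ or to a normalised inhomogeneous quadratic; in the first (homogeneous) case infinitely many prime values already follow from the Chebotarev density theorem applied to the ring class field of the order of discriminant $\Delta$ (the hypothesis on odd values disposing of the prime $2$), so the substantial case is the inhomogeneous one. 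There one chooses a region $\mc{R}$ of area $\asymp X$ on which $P$ takes values in $(0,X]$ --- a disc if $\Delta<0$, a fundamental domain for the group of automorphs of $Q$ if $\Delta>0$ --- and forms the sifting sequence $\mc{A}=(a_n)_{n\le X}$ with $a_n=\#\{(x,y)\in\mc{R}:P(x,y)=n,\ n\ \text{odd}\}$, so that $|\mc{A}|=\sum_n a_n\asymp X$. For a smooth affine conic and $p\nmid 2\Delta$ the number of $(x,y)\bmod p$ with $p\mid P(x,y)$ is $p+O(1)$, so the local density $\omega(p)$ satisfies $\sum_{p\le z}\omega(p)\log p/p=\log z+O(1)$: the sequence has sieve dimension $1$, and $S(\mc{A},X^{1/2})$ counts, up to harmless factors, the odd primes of the form $P(x,y)\le X$.

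\emph{Level of distribution.} The heart of the proof is to show that for every $A>0$ there is a $\theta>0$ with $\sum_{d\le X^{\theta}}|r_d|\ll_A X(\log X)^{-A}$, where $|\mc{A}_d|$, the number of $(x,y)\in\mc{R}$ with $d\mid P(x,y)$, equals the expected main term plus $r_d$. Writing $d\mid P(x,y)$ as the condition that $(x,y)$ lie in one of $d^{o(1)}$ sublattices of $\mf{Z}^2$ of index $d$, one would count lattice points in $\mc{R}$ by Poisson summation; the error is then governed by complete exponential sums --- Gauss and Kloosterman sums attached to $Q$ --- over the dual lattices, and Weil's bound supplies square-root cancellation in these. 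Summing over $d$ and separating the diagonal gives an admissible $\theta$ well beyond the value $\tfrac14$ that trivial lattice-point estimates would yield. Pushing $\theta$ up far enough --- and, in order to reach prime rather than merely almost-prime values, supplementing it by a bilinear (Type~II, ``switching'') estimate for the same counts that exploits the factorisations of the moduli --- is the step I expect to be the main obstacle.

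\emph{Conclusion.} Finally one feeds the level of distribution into the Rosser--Iwaniec linear sieve, realised as a weighted (Richert--Kuhn) sieve combined with a Buchstab decomposition that uses the bilinear input to break parity, so as to bound below the number of $(x,y)\in\mc{R}$ with $P(x,y)$ prime; together with the lower bound $S(\mc{A},z)\ge|\mc{A}|\,V(z)\,(f(s)+o(1))$ and an upper-bound sieve for the values having a prime factor in the intermediate range, this would give $\#\{n\le X:n\ \text{an odd prime},\ n=P(x,y)\}\gg_P X(\log X)^{-3/2}$. In particular $P$ represents infinitely many odd primes, which is the assertion.
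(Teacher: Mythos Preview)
The paper does not prove this theorem at all; it simply quotes it as a deep result of Iwaniec \cite{iwa} and then applies it as a black box in the proof of \lemref{indp}. So there is no ``paper's own proof'' to compare against --- the authors explicitly flag it as a very non-trivial input and move on.

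Your outline is a reasonable high-level description of the sieve strategy behind Iwaniec's original argument (linear sieve of dimension one, level of distribution via exponential sums, bilinear/switching input to break parity). As a sketch it is fine, though of course the actual execution --- especially pushing the level of distribution far enough and establishing the Type~II estimate --- is where the real difficulty lies, and your description at that point is more aspirational than an argument. For the purposes of this paper, however, none of this is needed: you are free (and expected) to cite \cite{iwa} directly, as the authors do.
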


In the above theorem, $P(x,y)$ is said to depend essentially on two variables if $(\partial P/\partial x)$ and $(\partial P/\partial y)$ are linearly independent.
\begin{lem}[Hermitian forms representing primes] \label{indp}
Let $T\in \Lambda_{+}(\mc{O})$ be a primitive matrix. Then there exist $g \in \mrm{GL}_2(\mc{O})$ such that $g^*Tg = \bpm * & * \\ * & p \epm$ for some odd prime $p$.		
\end{lem}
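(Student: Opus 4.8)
The plan is to use the identity $\big(g^{*}Tg\big)_{22}=T[w]$, where $w\in\mc{O}^2$ is the second column of $g$ and $T[w]:=\overline{w}^{\,t}Tw$. Since $\mrm{GL}_2(\mc{O})$ acts transitively on ordered bases of $\mc{O}^2$ and $\mc{O}$ is a PID, the vectors occurring as a second column of some $g\in\mrm{GL}_2(\mc{O})$ are precisely those $w\in\mc{O}^2$ which form part of an $\mc{O}$-basis; hence \lemref{indp} will follow once we exhibit such a $w$ with $T[w]$ an odd prime. First I would write $T=\bpm n&r\\ \overline{r}&m\epm$ with $n,m\in\mf Z_{>0}$ and $r=\tfrac{i}{\D}\rho$, $\rho\in\mc{O}$, and recall that primitivity of $T$ says exactly $\gcd\big(n,m,\mrm{cont}(\rho)\big)=1$, where $\mrm{cont}(\rho)$ is the largest positive integer dividing $\rho$ in $\mc{O}$. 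The idea is then to restrict $T$ to a well-chosen rank-two $\mf Z$-sublattice of $\mc{O}^2$, obtaining an integral binary quadratic form to which \thmref{life} of Iwaniec applies.

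Concretely, for $\mu\in\mc{O}$ I would use the sublattice $\bpm 1&\mu\\ 0&1\epm\mf Z^2$ and the form
\[
P_{\mu}(x,y):=T\Big[\bpm x+\mu y\\ y\epm\Big]=n\,x^{2}+\big(n\,\mrm{tr}(\mu)+\mrm{tr}(r)\big)xy+\big(n\,N(\mu)+m+\mrm{tr}(\overline{\mu}r)\big)y^{2}\qquad(x,y\in\mf Z).
\]
This is an integral binary quadratic form, and since $\bpm 1&\mu\\ 0&1\epm\in\mrm{GL}_2(\mc{O})$ it has the crucial feature that whenever $P_{\mu}(x_{0},y_{0})$ is squarefree (so that $\gcd(x_{0},y_{0})=1$), the vector $w:=\bpm x_{0}+\mu y_{0}\\ y_{0}\epm$ is part of an $\mc{O}$-basis of $\mc{O}^2$. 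Moreover $P_{\mu}$ is positive definite, being the restriction of $T>0$ to a rank-two sublattice; hence its discriminant is negative, so not a rational square, and $P_{\mu}$ is irreducible over $\mf Q$, while its Hessian is nonsingular, so $P_{\mu}$ depends essentially on two variables. Thus two of the four hypotheses of \thmref{life} hold for every $\mu$.

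It remains to choose $\mu$ so that $P_{\mu}$ has content $1$ and represents arbitrarily large odd integers. From the displayed formula, $\mrm{cont}(P_{\mu})=\gcd\big(n,\mrm{tr}(r),\,m+\mrm{tr}(\overline{\mu}r)\big)$, and a short computation shows that $\mrm{tr}(\overline{\mu}r)$ runs over precisely the integer multiples of $\mrm{cont}(\rho)$ as $\mu$ runs over $\mc{O}$. Splitting the primes $p\mid\gcd(n,\mrm{tr}(r))$ according to whether $p\mid\mrm{cont}(\rho)$ (the case $p\mid\mrm{cont}(\rho)$ being automatically harmless, since $p\mid m$ would contradict $\gcd(n,m,\mrm{cont}(\rho))=1$) and invoking the Chinese Remainder Theorem, one finds $\mu$ with $\mrm{cont}(P_{\mu})=1$; fix it and write $P:=P_{\mu}=Ax^{2}+Bxy+Cy^{2}$ with $\gcd(A,B,C)=1$ and $A=n>0$. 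Then $P$ represents arbitrarily large odd integers: take $P(t,0)=At^{2}$ for odd $t$ when $A$ is odd, $P(0,t)=Ct^{2}$ when $A$ is even and $C$ odd, and $P(t,t)=(A+B+C)t^{2}$ when $A,C$ are both even (so $B$ is odd). Now \thmref{life} yields infinitely many odd primes of the form $P(x_{0},y_{0})$; for any one of them, $w=\bpm x_{0}+\mu y_{0}\\ y_{0}\epm$ is part of a basis and $T[w]=P(x_{0},y_{0})$ is an odd prime, so completing $w$ to a basis of $\mc{O}^2$ with $w$ as its second column produces the desired $g\in\mrm{GL}_2(\mc{O})$.

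I expect the only genuine obstacle to be arranging $\mrm{cont}(P_{\mu})=1$: restricting $T$ to $\mf Z^{2}$ alone can destroy primitivity, so one really has to use the freedom in $\mu$ together with the precise content of $c(T)=1$, and the verification that $P_{\mu}$ then takes odd values comes out of the same parity bookkeeping. The remaining ingredients — the reduction to a primitive representation, irreducibility, and essential dependence on two variables — are routine.
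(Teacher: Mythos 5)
Your argument is correct, and it rests on the same pivotal input as the paper's proof --- Iwaniec's \thmref{life} applied to the $(2,2)$-entry of $g^{*}Tg$ --- but the quadratic polynomial you hand to that theorem is constructed differently, so essentially all of the verifications diverge. The paper takes the second column of $g$ to be $\bpm \beta \\ 1 \epm$ (unimodularity is then automatic) and lets $\beta$ run over a rank-two $\mf Z$-sublattice of $\mc O$, producing an \emph{inhomogeneous} polynomial with constant term $m$: its content divides $c(T)=1$ with no extra work, its odd values come from a parity case split on $m$, $n$ and the coordinates of $r$, and irreducibility is checked by ruling out an explicit linear factorization. You instead restrict $T$ to $\bpm 1&\mu\\0&1\epm\mf Z^{2}$ and obtain a \emph{homogeneous} positive definite form $P_{\mu}$. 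That buys you cleaner irreducibility and essential dependence (the discriminant is negative, hence neither zero nor a square), but it costs you the two things the paper gets for free: primitivity of the form, which you must engineer through the choice of $\mu$ --- and your analysis is right, since the image of $\mu\mapsto\mrm{tr}(\overline{\mu}r)$ is exactly $\mrm{cont}(\rho)\mf Z$ by perfectness of the trace pairing $\mc O\times\mc O^{\#}\to\mf Z$, while $c(T)=1$ disposes of the primes $p\mid\gcd(n,\mrm{cont}(\rho))$ because then $p\nmid m$ --- and unimodularity of the representing vector, which you correctly recover from the primality (indeed squarefreeness) of the represented value. Both routes are complete; the paper's is shorter mainly because the column $\bpm\beta\\1\epm$ sidesteps both of those issues, while yours is arguably more systematic in how it exploits $c(T)=1$.
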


\begin{proof}
Let us write $g = \bpm \alpha & \beta \\ \gamma & \delta \epm$ and $T = \bpm n & r \\ \overline{r} & m \epm \in \Lambda_{+}(\mc{O})$. Then one computes that
\[g^*Tg =\bpml * & * \\ * &  N(\beta)n + \delta r \overline{\beta}  +   \beta\overline{r} \overline{\delta}   +N(\delta) m   \epml .\]
At this point we would like to invoke \thmref{life}, choosing $g$ appropriately according to the following cases.

\noindent $\bullet$ \textit{Either $m$ or $n$ is odd:} If $m$ is odd, set $\delta=1,\gamma=0$. Such a matrix can be easily completed to $\mrm{GL}_2(\mc{O})$ for any value of $\beta$.

When $D\equiv 0 \pmod 4$, $r$ is of the form $r=\tfrac{i}{\D}(r_1+\frac{i}{2}\D r_2)$ and set $\beta=x+\frac{i}{2}\D y$. We put
\[ P(x,y) = n(x^2+\tfrac{|D|}{4}y^2) - r_2x+r_1y+m. \]
When $D\equiv 1 \pmod 4$,  r is of the form $r=\tfrac{i}{\D}(\frac{r_1}{2}+\tfrac{i\D}{2}r_2)$ and set $\beta=x+i\D y$. In this case we put
\[ P(x,y) = n(x^2+|D|y^2) - r_2x+r_1y+m. \]
Noting that $T$ is primitive, it is easily seen that in both cases $P$ satisfies the first hypothesis of \thmref{life}. Hence it is enough to prove that $P(x,y)$ is irreducible in $\mathbf{Q}$.

If at all there is a non-trivial factorization over $\mf Q$, it has to be into two linear factors, say
\[ P(x,y) = (a_1 x +b_1 y + c_1) ( a_2 x +b_2 y + c_2 ).\]
A short calculation shows that $a_1 / b_1 = - a_2/ b_2 = \lambda \q \text{(say)};$
Now comparing the coefficients of $x^2$ and $y^2$ we get that $\lambda^2=
-\frac{4}{|D|},  \text{ when } D\equiv 0\pmod 4 \text{ and }
\lambda^2=-\frac{1}{|D|},  \text{ when } D\equiv 1\pmod 4.$
A contradiction in both the cases. Hence $P$ is irreducible.

Further that $P$ represents arbitrarily large odd values is clear since $m$ is odd and we can vary $x,y$ over large even integers. Essential dependence in two variables is trivial in our case. Thus $P$ represents infinitely many odd primes. The case when $n$ is odd follows by symmetry of the situation (we take $\beta=1$ and proceed similarly).

\noindent $\bullet$ \textit{$m$ and $n$ are even:} In this case we can take $P$ as before and note that one of $r_1$ or $r_2$ must be odd, since $T$ was primitive. Say $r_2$ is odd. Then varying $x$ through odd integers and $y$ through even ones, we see that $P$ represents arbitrarily large odd integers. The other properties of $P$ continue to hold.
\end{proof}

We embark upon the proof of \thmref{th:01} by using the following result due to S. Yamana~\cite{yamana}.
\begin{thm}\label{yamana}
If $F \in S_k({\mc O})$ is non-zero, then there exists a primitive $T \in \Lambda_{+}(\mc{O})$ such that $a(F,T) \neq 0$.
\end{thm}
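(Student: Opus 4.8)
The plan is to reduce the statement to the positivity of a suitable Dirichlet series or, more directly, to the nonvanishing of the Petersson norm of $F$, by comparing two natural expressions: one which is a weighted sum of $|a(F,T)|^2$ over \emph{all} $T$, and one where only primitive $T$ contribute. The key structural input is the action of the Hecke-type operators coming from scaling $T \mapsto \ell T$, together with the fact that any $T \in \plamb$ can be written uniquely as $c(T)\cdot T_0$ with $T_0$ primitive. So first I would set up the arithmetic of contents: writing $T = \ell T_0$ with $T_0$ primitive, every Fourier coefficient $a(F, \ell T_0)$ is governed (via the Fourier--Jacobi coefficients and the theory of Hermitian Jacobi forms, or directly via Hecke operators $T_\ell$ on $S_k(\mc O)$) by the primitive coefficients together with eigenvalue-type data. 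If \emph{all} primitive coefficients vanished, I would want to propagate this to conclude $a(F,T)=0$ for every $T$, contradicting $F \neq 0$.

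The cleanest route is by contradiction combined with a minimality argument on the content. Suppose $a(F,T)=0$ for every primitive $T$, but $F\neq 0$; then there is some $T$ with $a(F,T)\neq 0$, and among all such $T$ choose one with minimal content $c(T)=\ell>1$ (minimal in the divisibility order, say, or with $\ell$ minimal). Write $T=\ell T_0$ with $T_0$ primitive. Now I would exploit a Hecke operator or an explicit Fourier-coefficient identity that expresses $a(F, \ell T_0)$ as a $\mbf Z$-linear combination of $a(F, d T_0)$ for $d \mid \ell$, $d<\ell$, with the ``new'' term $a(F,T_0)$ entering with an invertible coefficient. By minimality all the terms with $d<\ell$ on the relevant side vanish except possibly through $a(F,T_0)$, which is zero by the primitivity hypothesis; this forces $a(F,\ell T_0)=0$, a contradiction. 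Making this identity precise is exactly the content of Yamana's argument and requires the explicit action of $\mrm{GL}_2(\mc O)$ on $\plamb$ and the known behaviour of Fourier coefficients under the relevant Hecke operators; this is where I would cite \cite{yamana} for the technical core.

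Alternatively, and perhaps more robustly, one passes through the Fourier--Jacobi expansion \eqref{eq:1}: since $F \neq 0$, some $\phi_m \neq 0$, and for a Hermitian Jacobi cusp form the coefficients $c_\phi(n,r)$ with $r$ (or the associated discriminant) primitive are nonzero for infinitely many values --- this is the Hermitian analogue of the classical fact for Jacobi forms, provable via the theta decomposition \eqref{eq:thetadec} since a nonzero vector-valued modular form $(h_s)_s$ cannot have all ``primitive'' components identically zero. Translating primitivity of the pair $(n,r)$ back to primitivity of the matrix $\bpm n & r \\ \overline r & m \epm$ then yields a primitive $T$ with $a(F,T)\neq 0$. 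The main obstacle in either approach is the bookkeeping that relates divisibility/content of the $2\times 2$ matrix $T$ to divisibility of the Jacobi-form data $(n,r,m)$, especially handling the prime(s) dividing $|D|$ and the unit group $\mc O^\times$; once that dictionary is set up, the nonvanishing follows from the injectivity of the theta decomposition and the fact that $F\neq 0$.
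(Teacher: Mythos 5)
A preliminary remark: the paper offers no proof of this statement --- it is imported wholesale as Yamana's theorem, with the citation \cite{yamana} standing in for the argument --- so your closing move of deferring ``the technical core'' to that reference is in effect exactly what the authors do. The problem is that the two sketches you wrap around that citation each contain a concrete gap, so neither can be read as an outline of an actual proof.

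In the first sketch you posit ``an explicit Fourier-coefficient identity that expresses $a(F,\ell T_0)$ as a $\mathbf{Z}$-linear combination of $a(F,dT_0)$ for $d\mid\ell$, $d<\ell$, with $a(F,T_0)$ entering with an invertible coefficient.'' No such identity exists for a form that is not a Hecke eigenform: the relations between coefficients at $\ell T_0$ and at $dT_0$ supplied by Hecke theory involve the eigenvalues of $F$, and the theorem asserts only the injectivity of $F\mapsto \bigl(a(F,T)\bigr)_{T\ \text{primitive}}$, which is strictly weaker than every coefficient being a universal linear function of the primitive ones. In the second sketch the key assertion --- that a nonzero vector-valued form $(h_s)_s$ cannot have all of its ``primitive'' components zero --- is refuted by the paper itself: when $p$ splits as $\pi\overline{\pi}$, any nonzero element of $\mc{J}_{k,1}|U_{\pi}$ has $c_\phi(n,r)=0$ unless $\pi\mid r$, hence $h_s=0$ for every $s$ with $(s,i\D p)=1$, and Proposition \ref{prop:p1mod4} identifies these index-old forms as precisely the obstruction; this is why the authors must develop the machinery of Section \ref{sec:indexold} rather than use the naive Eichler--Zagier map. (Such a $\phi$ can still contribute primitive matrices, since primitivity of $\bpm n & r\\ \overline{r} & m\epm$ is a condition on the whole triple and not on $r$ alone, so the theorem survives --- but your proposed route to it is blocked.) The ``bookkeeping'' you defer at the end is therefore the entire content of the result; the proposal is an accurate identification of where the difficulty lies, but not a proof.
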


\subsection{Reduction to elliptic cusp forms and proof of \thmref{th:01}}\label{subsec:thm1}
Let $F\in S_{k}(\mc{O})$ be non-zero and by \thmref{yamana}, choose $T_0 \in \Lambda_{+}(\mc{O})$ primitive such that $a(F,T_0) \neq 0$. From fact that $a(F, g^*Tg)=(\text{det }g)^k a(F,T)$ for all $g \in \mrm{GL}_2(\mc{O})$ and by using \lemref{indp} with $T=T_0$, we can assume that $T_0=\bpm * & * \\ * & p  \epm$ for an odd prime $p$.

Appealing to the Fourier-Jacobi expansion of $F$ as in \eqref{eq:1} and the above conclusion, it follows that there is an odd prime $p$ with $(p,i\D)=1$ such that $\phi_{p}\in \mc{J}_{k,p}$ is non-zero. Recall that the Fourier expansion of $\phi_{p}$ has the shape
\begin{equation}
\phi_p(\tau,z_{1},z_{2}) = \sum_{n = 0}^{\infty} \underset{|D| np \geq N(r)}{\sum_{r\in \mc{O}}} c_F(n , r) e\left( n \tau + \tfrac{i r}{\D} z_{1} + \tfrac{\overline{i r}}{\D} z_{2} \right),
\end{equation}
where $c_F(n,r)=a\left( F, \bpm
n & ir/\D\\
\overline{ir}/\D & p
\epm \right)$.

Now let $h^F$ and $h_{\tilde{\eta}}^F$ be the images of $\phi_{p}$ under $\iota$ and $\iota_{\tilde{\eta}}$ respectively (defined in section \ref{sec:introhjf}). The crucial fact is the following, proved at the end of section~\ref{sec:indexold}.
\begin{prop}\label{prop:finalprop}
Let $p\in \mbf{Z}$ be a prime and $\phi\in \mc{J}_{k,p}$ be non zero. Then $\iota_{\tilde{\eta}}(\phi)\neq 0$ for some $\eta$ or $\iota(\phi)\neq 0$.
\end{prop}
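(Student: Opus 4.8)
The plan is to argue by contradiction: suppose $\phi \in \mc{J}_{k,p}$ is non-zero but $\iota(\phi) = 0$ and $\iota_{\tilde{\eta}}(\phi) = 0$ for every admissible character $\eta$ of $G$ (with all choices of extension $\tilde{\eta}$). Using the decomposition $\mc{J}_{k,p} = \bigoplus_\eta \mc{J}_{k,p}^\eta$ from section~\ref{sec:decomp} and the observation there that $\bigoplus_{\eta' \neq \eta} \mc{J}_{k,p}^{\eta'} \subset \ker(\iota_{\tilde{\eta}})$, we first reduce to analyzing what $\iota_{\tilde{\eta}}$ does on the single summand $\mc{J}_{k,p}^\eta$ it does not automatically kill. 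Write $\phi = \sum_\eta \phi^\eta$; then $\iota(\phi) = 0$ already forces $\phi^{\eta_0} \in \ker(\iota)$, and for each $\eta \neq \eta_0$, vanishing of all $\iota_{\tilde{\eta}}(\phi)$ forces $\phi^\eta \in \ker(\iota_{\tilde{\eta}})$ for every extension $\tilde{\eta}$. So it suffices to show: for each character $\eta$ of $G$ with $\eta(\epsilon) = \epsilon^{-k}$, the map $\phi \mapsto (\iota_{\tilde{\eta}}(\phi))_{\tilde{\eta}}$ (collection over extensions $\tilde{\eta}$ of the fixed $\eta$), together with $\iota$ when $\eta = \eta_0$, is injective on $\mc{J}_{k,p}^\eta$ — or at least that the union of all these maps over all $\eta$ is injective — which is precisely what items $(i)$ and $(ii)$ advertised in the introduction provide. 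Concretely I would invoke the inert case result (the collection $\{\iota_\xi\}$ is injective) and the split case result (either the collection is injective or $\iota$ alone is), which are the theorems developed in section~\ref{hjf} and section~\ref{sec:indexold}; the ramified case $p = p_K$ is excluded since we arranged $(p, i\D) = 1$.

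The key computational input underlying injectivity of the collection is the relation between the Fourier coefficients $c_\phi(n,r)$ and the coefficients of the various $h_{\tilde{\eta}}$. Unwinding \eqref{eq:heta} and \eqref{def:hs}, the $(|D|pN)$-th coefficient of $h_{\tilde{\eta}}$ is, up to normalization, $\sum_{s} \overline{\tilde{\eta}(s)} c_\phi\bigl(\tfrac{N + N(s)}{|D|p}, s\bigr)$ (sum over $s \in \mc{O}/i\D p\mc{O}$ with $N(s) \equiv -N \bmod |D|p$). The point is that as $\tilde{\eta}$ ranges over all extensions of a fixed $\eta$ — equivalently, as one twists by all characters of $(\mc{O}/i\D p\mc{O})^\times/G$ — the resulting linear combinations detect, by finite Fourier inversion on that quotient group, the individual quantities $c_\phi\bigl(\tfrac{N+N(s)}{|D|p}, s\bigr)$ grouped by the $G$-orbit of $s$; and on $\mc{J}_{k,p}^\eta$ the relation $h_{\mu s} = \eta(\mu) h_s$ means that a single representative per orbit already determines $h_s$. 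So recovering all $h_s$ from the collection $\{h_{\tilde\eta}\}$ amounts to showing the relevant character sums are non-degenerate, which is where the structure of $G$ inside $(\mc{O}/i\D p\mc{O})^\times$ — and hence whether $p$ is inert or split — enters decisively. I would set up this finite Fourier analysis carefully, keeping track of the condition $\eta(\epsilon) = \epsilon^{-k}$ which is exactly what makes the putative target spaces $S_{k-1}(2f|D|p, \chi_D \overline{\tilde\eta})$ non-trivial.

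The main obstacle, as the introduction itself flags, is the split case: there the quotient group $(\mc{O}/i\D p\mc{O})^\times / G$ can be too small (or the character sums too degenerate) for the collection $\{\iota_{\tilde\eta}\}$ alone to separate all theta components, and one genuinely needs the dichotomy "collection injective OR $\iota$ injective." Establishing that dichotomy is where the index-old theory à la Skoruppa--Zagier is required: one analyzes the failure locus of the collection map and shows it is controlled by old forms coming from index $1$, on which $\iota$ is injective by the classical Eichler--Zagier theory transported to the Hermitian setting via $\mc{J}^{spez}_{k,p}(\mc{O})$ and Haverkamp's result. I would therefore structure the proof of \propref{prop:finalprop} as a short corollary: cite the inert-case injectivity for $p$ inert, and the split-case dichotomy for $p$ split, each applied to the non-zero component $\phi^\eta$ detected above; in either case one of $\iota_{\tilde\eta}(\phi)$ or $\iota(\phi)$ must be non-zero, contradicting our assumption. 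The heavy lifting is deferred to the earlier sections, so here the task is only to assemble the $G$-decomposition reduction cleanly and verify the character-theoretic bookkeeping matches the hypotheses of those results.
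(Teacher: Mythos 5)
Your proposal is correct and follows essentially the same route as the paper: the proposition is deduced from the inert-case corollary and the split-case dichotomy of section~\ref{sec:indexold}, whose common engine is the finite Fourier inversion over extensions $\tilde{\eta}$ and over characters of $G$ (Proposition~\ref{prop:hs=0}) recovering the $h_s$ with $(s,i\D p)=1$, followed by either $\phi=0$ (inert) or the index-old/spezial analysis making $\iota$ itself injective (split). Your $\oplus_\eta\mc{J}_{k,p}^\eta$ framing is just a repackaging of the paper's double character sum, and your one caveat (injectivity of the union of the maps rather than component-by-component) is exactly the right formulation.
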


Now suppose that $h^F\neq 0$. Let the Fourier expansion of $h^F$ be given by $h^F(\tau)=\sum_{n>0}A(n)e^{2\pi in\tau}$, where $A(n)$ is given by
\begin{equation*}
A(n)=\underset{N(s)+n\in |D|p\mathbf{Z}}{\sum_{s\in \mc{O}/i\D p\mc{O}}}c_F\left(\tfrac{n+N(s)}{|D|p},s\right).
\end{equation*}
Since $h^F\neq 0$ and $N/m_\chi$ is square-free, using \thmref{th:02}(a), we get infinitely many square-free $n$ such that $A(n)\neq 0$. For each of these $n$, we get an $s$ such that $c_F\left(\frac{n+N(s)}{|D|p},s\right)=a( F, \begin{psmallmatrix}
\frac{n+N(s)}{|D|p} & is/\D\\
\overline{is}/\D & p
\end{psmallmatrix})$ is not equal to zero. Moreover by \thmref{th:02}(b), for any $\varepsilon>0$ we have
\[\#\{0<n<X, \text{ }n \text{ square-free, }A(n)\neq 0\}\gg_{h^F,\varepsilon} X^{1-\varepsilon}.\]
Thus, for any $\varepsilon>0$, $\#\{0<n<X,\, n \text{ square-free},\, a(F,T)\neq 0,\, n=|D|\mathrm{det}(T)\}\gg_{F,\varepsilon} X^{1-\varepsilon}.$

Now suppose $h^F=0$, then by proposition \ref{prop:finalprop}, there exists a character $\eta$ of $G$ such that $h_{\tilde{\eta}}^F\neq 0$. We need another proposition, whose proof is deferred to end of section~\ref{subsec:lemmaG}.
\begin{prop}\label{prop:hetanonzero}
Let $\eta$ be a character of $G$. Suppose $\iota_{\tilde{\eta}}(\phi)\neq 0$ for some extension $\tilde{\eta}$ of $\eta$, then there exists an extension $\tilde{\eta}_0$ of $\eta$ such that restriction of $\tilde{\eta}_0$ to $\mbf {Z}$ has conductor divisible by $p$ and $\iota_{\tilde{\eta}_0}(\phi)\neq 0$.
\end{prop}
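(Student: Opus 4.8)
The plan is to start from a nonzero twisted image $\iota_{\tilde\eta}(\phi) = h_{\tilde\eta}$ for \emph{some} extension $\tilde\eta$ of $\eta$, and to modify the choice of extension so as to force the restriction of $\tilde\eta$ to $\mathbf{Z}$ to have conductor divisible by $p$ while keeping the image nonzero. First I would recall that the set of extensions $\tilde\eta$ of a fixed character $\eta$ of $G$ to $(\mc O/i\D p\mc O)^\times$ is a torsor under the group of characters of the quotient $(\mc O/i\D p\mc O)^\times / G$; so any two extensions $\tilde\eta$, $\tilde\eta_0$ differ by a character $\psi$ of this quotient, i.e. $\tilde\eta_0 = \tilde\eta\cdot\psi$ with $\psi|_G \equiv 1$. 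Correspondingly, from the definition \eqref{eq:heta}, $h_{\tilde\eta_0} = \sum_s \overline{\tilde\eta(s)}\,\overline{\psi(s)}\,h_s(|D|p\tau)$, so twisting the extension amounts to twisting the theta components by $\overline{\psi}$.

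The key structural input is the local decomposition of $(\mc O/i\D p\mc O)^\times$. Writing $\mathcal D = \D i\mc O$ for the different and noting $(|D|,p)=1$, the Chinese Remainder Theorem splits $\mc O/i\D p\mc O \cong \mc O/\mathcal D \times \mc O/p\mc O$ (or the obvious variant tracking the exact power of $p_K$ dividing $|D|$), so that a character of $(\mc O/i\D p\mc O)^\times$ factors as a product of a character ``at $|D|$'' and a character ``at $p$''. The subgroup $G$, being cut out by $N(\mu)\equiv 1 \pmod{|D|p}$ together with the congruence $\mu \in 1 + m\mathcal D$, interacts with the $p$-component through the norm condition $N(\mu)\equiv 1 \pmod p$; crucially the norm map $(\mc O/p\mc O)^\times \to (\mathbf{Z}/p)^\times$ is surjective (whether $p$ is inert, split, or ramified — but $p$ is an odd prime coprime to $|D|$, hence unramified), so $G$ does not contain the full $p$-local unit group and there is genuine room to twist at $p$. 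I would choose $\psi$ to be a character supported at the $p$-component that is nontrivial on $(\mathbf{Z}/p)^\times$ — concretely, pull back a nontrivial character of $(\mathbf{Z}/p)^\times$ along the norm, or equivalently pick $\psi$ trivial on $G$ but whose restriction to the image of $\mathbf{Z}$ in $(\mc O/i\D p\mc O)^\times$ has conductor exactly $p$ at the prime $p$. This guarantees $\tilde\eta_0 := \tilde\eta\cdot\psi$ restricts to $\mathbf{Z}$ with conductor divisible by $p$.

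It remains to arrange that $h_{\tilde\eta_0}\neq 0$ for a suitable such $\psi$. Here the point is that the finitely many twists $\{h_{\tilde\eta\cdot\psi} : \psi|_G \equiv 1\}$ cannot all vanish if $h_{\tilde\eta}\neq 0$: summing $h_{\tilde\eta\cdot\psi}$ over all $\psi$ in the relevant character group (with appropriate weights, using the orthogonality relations for characters of $(\mc O/i\D p\mc O)^\times/G$) recovers a nonzero multiple of a single theta component $h_{s_0}$ with $s_0$ a unit, and some such $h_{s_0}$ is nonzero because $h_{\tilde\eta}\neq 0$ exhibits a nonzero $h_s$ with $s$ invertible mod $i\D p$ (twisting by $\overline{\tilde\eta(s)}$ does not affect which components are zero). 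More carefully: I would first reduce to the case where $\phi$ lies in a single eigenspace $\mc J_{k,p}^\eta$ (by the decomposition in section~\ref{sec:decomp}, since $\iota_{\tilde\eta}$ kills all the other eigenspaces), on which the components satisfy $h_{\mu s} = \eta(\mu) h_s$ for $\mu\in G$; then $h_{\tilde\eta\cdot\psi} = \sum_{\text{cosets } sG}\big(\sum_{\mu\in G}\overline{\tilde\eta(\mu s)\psi(\mu s)}\big)h_s\big/|\text{something}| $ — schematically, $h_{\tilde\eta\cdot\psi}$ picks out the coset-sum weighted by $\overline{\psi}$, and running $\psi$ over its character group and applying Fourier inversion on the finite abelian group $(\mc O/i\D p\mc O)^\times/G$ recovers each individual coset-contribution $\overline{\tilde\eta(s)}h_s(|D|p\tau)$, at least one of which is nonzero. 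Hence some twist $h_{\tilde\eta\cdot\psi}$ with $\psi$ as chosen above is nonzero, giving the desired $\tilde\eta_0$.

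\textbf{Main obstacle.} The delicate point is the compatibility between the \emph{arithmetic} constraint (conductor at $p$ divisible by $p$, a condition on $\tilde\eta|_{\mathbf{Z}}$) and the \emph{analytic} non-vanishing constraint: I need a $\psi$ that simultaneously (i) is trivial on $G$, (ii) has the prescribed conductor behaviour at $p$, and (iii) does not send the nonzero part of $h_{\tilde\eta}$ to zero. Establishing that these three can be met at once requires understanding exactly which cosets of $G$ in $(\mc O/i\D p\mc O)^\times$ support nonzero components $h_s$ and checking that the subgroup of ``bad'' $\psi$ (those violating (ii) or killing the support) is proper; this is where the surjectivity of the local norm map at $p$ and the explicit structure of $G$ at $p$ do the real work, and is the step I expect to occupy most of the proof.
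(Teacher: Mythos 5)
Your overall architecture is sound, and the first half of your argument (producing an extension whose restriction to $\mathbf{Z}$ has conductor divisible by $p$) is essentially the paper's Lemma~\ref{lem:nontrivialeta}: the paper gets it by summing $\tilde{\eta}(m)$ over all extensions for a well-chosen integer $m\equiv 1\pmod{|D|}$, $m\not\equiv\pm 1\pmod p$, while you construct a twist by pulling back a character of $(\mathbf{Z}/p)^\times$ along the norm; either route works (and both quietly need $p>3$, since for $p=3$ every unit square is $1$). The genuine gap is exactly the step you flag as your ``main obstacle'': your Fourier inversion over $\psi\in\widehat{\tilde G/G}$ only shows that \emph{some} twist $h_{\tilde{\eta}\cdot\psi}$ is nonzero, and you never establish that the \emph{particular} $\psi$ with the prescribed conductor behaviour at $p$ can be chosen without killing $h$. ``Some twist is nonzero'' plus ``some twist has the right conductor'' does not give ``some twist has both,'' and your proposed fix (analysing which cosets support nonzero components and showing the set of bad $\psi$ is proper) is left entirely open.

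The missing observation, which dissolves the obstacle completely, is the paper's Lemma~\ref{lem:twists}: the vanishing of $h_\xi$ is \emph{independent of the extension} $\xi$ of $\eta$. Indeed, two units $s_1,s_2$ lie in the same coset of $G$ in $\tilde G$ if and only if $N(s_1)\equiv N(s_2)\pmod{|D|p}$, immediately from the definition \eqref{Gdef} of $G$ as the norm-one subgroup; hence in the coset decomposition
\[
h_\xi(\tau)=\sum_{s\in\tilde G/G}\overline{\xi(s)}\,f_s(\tau),\qquad f_s(\tau):=\sum_{\mu\in G}\overline{\eta(\mu)}\,h_{\mu s}(|D|p\tau),
\]
the piece $f_s$ has Fourier expansion supported on exponents $n\equiv -N(s)\pmod{|D|p}$, and these supports are pairwise disjoint as $s$ ranges over $\tilde G/G$. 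Therefore $h_\xi=0$ forces every $f_s=0$, and since the $f_s$ do not depend on $\xi$, either all extensions give zero or none does; no character orthogonality is needed. With this in hand there is no tension between your conditions (ii) and (iii): \emph{every} $\psi$ satisfies (iii), and you only need one satisfying (ii), which your norm-pullback (or Lemma~\ref{lem:nontrivialeta}) supplies. So your proof is one short lemma away from complete --- but that lemma is the actual content of the proposition.
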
 

Note that there is a choice in extending $\eta$ to $\tilde{\eta}$. But different $\iota_{\tilde{\eta}}(\phi)$ obtained in this way are either all vanish or none of them can vanish (see \lemref{lem:twists}). This allows us to assume that $h^F_{\tilde{\eta}}$ satisfies the conditions in \propref{prop:hetanonzero}.

We can write $h_{\tilde{\eta}}^F(\tau)=\sum_{n>0}B(n)q^n$, where $B(n)$ is given by
\[B(n)=\underset{ s\in \mc{O}/i\D p\mc{O} , \, N(s)+n\in |D|p\mathbf{Z} }  \sum  \overline{\tilde{\eta}(s)}c_F\left(\tfrac{n+N(s)}{|D|p},s\right).\]

\textsl{Case 1: } When $D$ is odd, $2f|D|p \big / m_{\chi_{D}\cdot \overline{\tilde{\eta}}}$ is of the form $|D|^\alpha 2p$, where $1\le \alpha\leq 2$.

\noindent$\bullet$ If $\alpha=1$, then we can apply \thmref{th:02} to $h^F_{\tilde{\eta}}$ and we get the result.

\noindent$\bullet$ If $\alpha=2$, then we apply proposition \ref{prop:genth2} (please see the end of section~\ref{int}) to $h^F_{\tilde{\eta}}$ with $p_1=2$, $p_2=|D|$, $p_3=p$ and $\alpha_1=1$, $\alpha_2=\beta=2$ and we get the result.

\textsl{Case 2:} When $D$ is even, $2f|D|p \big / m_{\chi_{D}\cdot \overline{\tilde{\eta}}}$ is of the form $|D|p$, since $\chi_{D}\cdot \overline{\tilde{\eta}}$ is a primitive character $\pmod {|D|p}$. We use proposition \ref{prop:genth2} for $h^F_{\tilde{\eta}}$, with $p_2=2$, $p_3=p$, $\alpha_1=0$ and $\alpha_2=4$, $\beta=2$, when $D=-4$ and $\alpha_2=6$, $\beta=3$, when $D=-8$ to get the result.\qed

\section{Interlude on Hermitian Jacobi Forms} \label{hjf}
\subsection{Some operators on $\mc{J}_{k,m}$}\label{subsec:operators}
In order to proceed further we need a few operators on $\mc{J}_{k,m}$. Let $\rho\in \mc{O}$ ($\rho\neq 0$), define the Hecke-type operator $U_\rho:\mc{J}_{k,m}\longrightarrow \mc{J}_{k,mN(\rho)}$ by (\cite[p.51]{haverkamp})
\begin{equation}
\phi|U_\rho(\tau,z_1,z_2)=\phi(\tau,\rho z_1,\overline{\rho}z_2).
\end{equation}
If $\phi$ has a Fourier expansion as in (\ref{fourier}), then the Fourier expansion of $\phi|U_\rho$ is given by
\begin{equation}\label{eq:Up}
\phi|U_\rho(\tau,z_1,z_2)=\sum_{n = 0}^{\infty} \underset{|D|N(\rho)nm \geq N(r)}{\sum_{r\in \rho \mc{O}}} c_{\phi}(n , r/\rho) e\left( n \tau + \tfrac{ir}{\D} z_{1} + \tfrac{\overline{ir}}{\D} z_{2} \right).
\end{equation}

Now for $\rho\in \mc{O}$ with $\rho|m$ and $N(\rho)|m$ we define a new operator $u_\rho$ on $\mc{J}_{k,m}$ as given below:
\begin{equation}
\phi|u_{\rho}(\tau,z_1,z_2):=N(\rho)^{-1}\sum_{x\in \mc{O}^2/\rho \mc{O}^2}\left(\phi\mid_{k,m}\left[\tfrac{x}{\rho}\right]\right)(\tau,z_1/\rho,z_2/\overline{\rho}).
\end{equation}
\begin{lem}
Let $u_\rho$ be defined as above. Then $u_\rho$ is an operator from $\mc{J}_{k,m}$ to $\mc{J}_{k,m/N(\rho)}$.
\end{lem}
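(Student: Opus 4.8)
The plan is to verify directly the three defining conditions for $\phi|u_\rho$ to be a Hermitian Jacobi form of weight $k$ and index $m/N(\rho)$: holomorphy on $\mathcal{H}\times\mathbf{C}^2$, invariance under $|_{k,m/N(\rho)}$ with respect to $\Gamma^J(\mc{O})$, and the correct shape of the Fourier expansion. Holomorphy is immediate, since $\phi|u_\rho$ is a finite $\mathbf{C}$--linear combination of functions obtained from the holomorphic $\phi$ by composing with holomorphic maps: multiplication by an exponential and an affine substitution in $(z_1,z_2)$ coming from $|_{k,m}[x/\rho]$, followed by the linear substitution $\sigma_\rho\colon(\tau,z_1,z_2)\mapsto(\tau,z_1/\rho,z_2/\overline{\rho})$.

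For the transformation law I would first peel off $\sigma_\rho$, writing $\phi|u_\rho=N(\rho)^{-1}\,(\psi\circ\sigma_\rho)$ with $\psi:=\sum_{x\in\mc{O}^2/\rho\mc{O}^2}\phi|_{k,m}[x/\rho]$, and reduce the lemma to the claim that $\psi$ is a weight--$k$, index--$m$ Jacobi form for the group $\Gamma_1(\mc{O})\ltimes(\tfrac1\rho\mc{O})^2$ (the subgroup of $\mc{G}^J$ generated by $\Gamma^J(\mc{O})$ and the elements $[x/\rho]$). Granting this, the substitution $\sigma_\rho$ intertwines the index--$m$ action of $\Gamma_1(\mc{O})\ltimes(\tfrac1\rho\mc{O})^2$ with the index--$(m/N(\rho))$ action of $\Gamma^J(\mc{O})=\Gamma_1(\mc{O})\ltimes\mc{O}^2$: it turns the translation lattice $(\tfrac1\rho\mc{O})^2$ into $\mc{O}^2$ and divides the index by $N(\rho)$, while in the $\mathrm{SL}_2$-- and unit--automorphy factor the only change is the rescaling of the quadratic exponent $mcz_1z_2/(c\tau+d)$ to $(m/N(\rho))cz_1z_2/(c\tau+d)$; comparing the two cocycles term by term then yields $|_{k,m/N(\rho)}$--invariance of $\psi\circ\sigma_\rho$ under $\Gamma^J(\mc{O})$. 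To prove the claim I would argue on generators of $\Gamma_1(\mc{O})\ltimes(\tfrac1\rho\mc{O})^2$: for such a generator $\gamma$ and any $x$, the group law in $\mc{G}^J$ gives $[x/\rho]\cdot\gamma=\gamma'\cdot[y/\rho,\xi]$ with $\gamma'\in\Gamma^J(\mc{O})$ ($\gamma'$ is trivial if $\gamma$ is a translation, and $\gamma'=\gamma$ if $\gamma\in\Gamma_1(\mc{O})$), with $y\in\mc{O}^2$ running again over a full set of representatives of $\mc{O}^2/\rho\mc{O}^2$ as $x$ does, and with a central phase $\xi\in S^1$; then $\phi|_{k,m}\gamma'=\phi$ absorbs $\gamma'$, reindexing absorbs the permutation $x\mapsto y$, and since the central factor acts on index--$m$ forms by $\xi^m$ one is reduced to checking $\xi^m=1$. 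The same bookkeeping, applied to $\gamma$ replaced by a lattice element of $\rho\mc{O}^2$, shows that $y\mapsto\phi|_{k,m}[y/\rho]$ depends only on $y\bmod\rho\mc{O}^2$, so $\psi$ is well defined.

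This last point --- $\xi^m=1$ --- is precisely where the hypotheses $\rho\mid m$ and $N(\rho)\mid m$ enter: every phase $\xi$ that occurs has the form $e\!\big(2\re(\alpha/N(\rho))\big)$ or $e\!\big(\beta\,N(\alpha)/N(\rho)\big)$ with $\alpha\in\mc{O}$ and $\beta\in\mathbf{Z}$, so on writing $m=N(\rho)m'$ with $m'\in\mathbf{Z}$ the exponent of $\xi^m$ becomes $m'\,\tr(\alpha)$ (resp.\ $m'\beta N(\alpha)$), an integer, whence $\xi^m=1$. Finally, for the Fourier expansion: having already established holomorphy and $\Gamma^J(\mc{O})$--invariance, $\phi|u_\rho$ is in particular $1$--periodic in $\tau$ and transforms correctly under the lattice $\mc{O}^2$, and the usual argument (Fourier expand in $\tau$, then feed in the $\mathbf{C}^2$--translation equations and use holomorphy, exactly as in the definition of $\mc{J}_{k,m}$) forces an expansion of the form $\sum_{n\ge0}\sum_{r\in\mc{O},\,|D|n(m/N(\rho))\ge N(r)}c(n,r)\,e\big(n\tau+\tfrac{ir}{\D}z_1+\tfrac{\overline{ir}}{\D}z_2\big)$; alternatively one may obtain this, and read off that $c_{\phi|u_\rho}(n,r)$ is supported where $|D|\,nm/N(\rho)\ge N(r)$, by substituting the expansion \eqref{fourier} of $\phi$ straight into the definition of $u_\rho$ and collecting terms. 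I expect the only genuinely delicate part to be the careful tracking of the $S^1$--cocycle through the multiplications in $\mc{G}^J$ and the verification that all residual phases die after raising to the $m$-th power; the lattice--folding and the effect of $\sigma_\rho$ on weight and index are routine once the setup is in place.
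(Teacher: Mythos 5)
Your proposal is correct and follows essentially the same route as the paper: the transformation law is obtained by re-indexing the sum over coset representatives of $\mc{O}^2/\rho\mc{O}^2$ (using that $\varepsilon M$ and translations permute them) together with the hypothesis $N(\rho)\mid m$ to kill the residual $S^1$-phases, and the cusp condition is read off from the Fourier expansion obtained by substituting \eqref{fourier} into the definition. The paper merely states the re-indexing step without tracking the cocycle explicitly, so your more careful verification that $\xi^m=1$ fills in exactly the detail the paper leaves implicit.
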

\begin{proof}
Let $\varepsilon M\in\Gamma_1(\mc{O})$ and $[\lambda,\mu]\in \mc{O}^2$. Then the requisite transformation properties of $\phi|u_\rho$ easily follow since if $\{x=(x_1,x_2)\}$ is a set of representatives for $\mc{O}^2/\rho \mc{O}^2$, then $\{(x_1,x_2)\varepsilon M\}$ and $\{(x_1+\lambda,x_2+\mu)\}$ are again a set of representatives for $\mc{O}^2/\rho \mc{O}^2$. Further using that $N(\rho)|p$ and the formulas (\ref{eq:gpact1}), (\ref{eq:gpact2}) we get,
\begin{align*}
\left(\phi|u_\rho\right)|_{k,m/N(\rho)}\varepsilon M&=\left(\phi|_{k,m}\varepsilon M\right)|u_\rho=\phi|u_\rho.\\
(\phi|u_\rho)\mid_{m/N(\rho)}[\lambda,\mu]&=\phi|u_\rho.
\end{align*}
To complete the proof we find the Fourier expansion of $\phi|u_\rho$. Let $x=(x_1,x_2)\in \mc{O}^2/\rho \mc{O}^2$, then from (\ref{eq:gpact2})
\begin{align*}
\left(\phi|\left[\tfrac{x}{\rho}\right]\right)(\tau,z_1/\rho,z_2/\overline{\rho})=e\big(\tfrac{pN(x_1)}{N(\rho)}\tau+\tfrac{p\overline{x_1}}{\overline{\rho}}z_1+\tfrac{px_1}{\rho}z_2)\phi\big(\tau,z_1+\tfrac{x_1}{\rho}\tau+\tfrac{x_2}{\rho},z_2+\tfrac{\overline{x_1}}{\overline{\rho}}\tau+\tfrac{\overline{x_2}}{\overline{\rho}}\big).
\end{align*}
On writing the Fourier expansion and using \eqref{expo}, we see that $\phi|u_\rho$ equals
\begin{align*}
\underset{n,r}{\sum_{x_1\in \mc{O}/\rho \mc{O}}}c_\phi\left(n-Re\big((\tfrac{2ir}{\D}-\tfrac{m\overline{x}_1}{\rho\overline{\rho}})x_1\big), \rho\left(r-\tfrac{\D m\overline{x}_1}{i\rho\overline{\rho}}\right)\right)e(n\tau+\tfrac{ir}{\D}z_1+\tfrac{\overline{ir}}{\D}z_2).
\end{align*}
Now let $r'=r-\frac{\D m\overline{x}_1}{i\rho\overline{\rho}}$. As $x_1$ varies modulo $\rho$, $r'$ varies modulo $\frac{i\D m}{\rho}$ with $r'\equiv r\text{ (mod}\frac{i\D m}{\rho\overline{\rho}})$. Also we have that $\frac{2ir}{\D}-\frac{m\overline{x}_1}{\rho\overline{\rho}}=\frac{i}{\D}\left(r'+r\right)$ and so,
\begin{equation}\label{eq:up}
\phi|u_\rho=\sum_{n,r}\underset{r'\equiv r\text{ (mod}\frac{i\D m}{\rho\overline{\rho}})}{\sum_{r'\text{ (mod}\frac{i\D m}{\rho})}}c_\phi\left(n+N(\rho)\tfrac{N(r')-N(r)}{|D|m},\rho r'\right)e(n\tau+\tfrac{ir}{\D}z_1+\tfrac{\overline{ir}}{\D}z_2).
\end{equation}
From this the conditions at cusps are easily seen to be satisfied. This completes the proof.
\end{proof}

\begin{prop}\label{prop:ucombi}
Let $\rho\in \mc{O}$.
\begin{enumerate}
\item[(a)] If $\phi\in \mc{J}_{k,m}$, then $\phi|U_\rho u_\rho=N(\rho)\phi$.
\item[(b)] If $\phi\in \mc{J}_{k,1}$ and $(\rho,\overline{\rho})=1$, then $\phi|U_\rho u_{\overline{\rho}}=\phi$.
\end{enumerate}
\end{prop}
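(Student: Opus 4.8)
The plan is to prove both parts by substituting into the explicit Fourier expansions \eqref{eq:Up} of $\phi|U_\rho$ and \eqref{eq:up} of $\phi|u_\rho$, using throughout one elementary periodicity property of Jacobi--Fourier coefficients: for $\phi\in\mc{J}_{k,m}$ the coefficient $c_\phi(n,r)$ of \eqref{fourier} depends on $r$ only through its residue in $\mc{O}/i\D m\mc{O}$ and on $n$ only through $|D|mn-N(r)$, so that $c_\phi(n',r')=c_\phi(n,r)$ whenever $r'\equiv r\pmod{i\D m}$ and $|D|mn'-N(r')=|D|mn-N(r)$. This is immediate from the theta decomposition \eqref{eq:thetadec}--\eqref{def:hs} together with the fact that $r\equiv s\pmod{i\D m}$ forces $N(r)\equiv N(s)\pmod{|D|m}$, which one checks by expanding $N(s+i\D m t)$. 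I extend $c_\phi$ by zero outside its support, so the periodicity statement remains valid in the degenerate range $|D|mn<N(r)$.

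\emph{Part (a).} By \eqref{eq:Up} the form $\psi:=\phi|U_\rho\in\mc{J}_{k,mN(\rho)}$ has $c_\psi(n,r)=c_\phi(n,r/\rho)$ for $r\in\rho\mc{O}$ and $c_\psi(n,r)=0$ otherwise, and $u_\rho$ is legitimate on $\mc{J}_{k,mN(\rho)}$ since $\rho\mid mN(\rho)$ and $N(\rho)\mid mN(\rho)$. Feeding $\psi$ into \eqref{eq:up} with the index $mN(\rho)$ in place of $m$, the moduli collapse: $\tfrac{i\D\,mN(\rho)}{\rho\overline\rho}=i\D m$, $\tfrac{i\D\,mN(\rho)}{\rho}=i\D m\overline\rho$, while $N(\rho)\tfrac{N(r')-N(r)}{|D|mN(\rho)}=\tfrac{N(r')-N(r)}{|D|m}$ and $c_\psi(\,\cdot\,,\rho r')=c_\phi(\,\cdot\,,r')$. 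Hence the coefficient of $e(n\tau+\tfrac{ir}{\D}z_1+\tfrac{\overline{ir}}{\D}z_2)$ in $\psi|u_\rho$ is $\sum_{r'}c_\phi\big(n+\tfrac{N(r')-N(r)}{|D|m},r'\big)$, the sum ranging over $r'\pmod{i\D m\overline\rho}$ with $r'\equiv r\pmod{i\D m}$. By the periodicity above each summand equals $c_\phi(n,r)$, and the number of such $r'$ is $[\,i\D m\mc{O}:i\D m\overline\rho\mc{O}\,]=N(\rho)$; therefore $\psi|u_\rho=N(\rho)\phi$.

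\emph{Part (b).} Start the same way: $\psi:=\phi|U_\rho\in\mc{J}_{k,N(\rho)}$ with $c_\psi(n,r)=c_\phi(n,r/\rho)$ for $r\in\rho\mc{O}$ (here $m=1$), and $u_{\overline\rho}$ is legitimate on $\mc{J}_{k,N(\rho)}$. Putting $\psi$ into \eqref{eq:up} with index $N(\rho)$ and with $\overline\rho$ as the operator parameter, the moduli become $\tfrac{i\D N(\rho)}{\overline\rho\rho}=i\D$ and $\tfrac{i\D N(\rho)}{\overline\rho}=i\D\rho$, and $N(\overline\rho)\tfrac{N(r')-N(r)}{|D|N(\rho)}=\tfrac{N(r')-N(r)}{|D|}$, so the coefficient of $e(n\tau+\cdots)$ in $\psi|u_{\overline\rho}$ is $\sum_{r'}c_\psi\big(n+\tfrac{N(r')-N(r)}{|D|},\overline\rho r'\big)$ over $r'\pmod{i\D\rho}$ with $r'\equiv r\pmod{i\D}$. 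The key point is that $c_\psi(\,\cdot\,,\overline\rho r')\ne0$ forces $\rho\mid\overline\rho r'$, hence $\rho\mid r'$ since $(\rho,\overline\rho)=1$; and $(\rho,\overline\rho)=1$ also gives $(\rho,i\D)=1$, because $i\D$ is, up to a unit, a product of ramified primes, each self-conjugate, so any such prime dividing $\rho$ would divide $\overline\rho$ as well. By the Chinese Remainder Theorem there is then a unique $r'\pmod{i\D\rho}$ with $r'\equiv r\pmod{i\D}$ and $\rho\mid r'$; writing $r'=\rho r''$, the only surviving term is
\[
c_\psi\Big(n+\tfrac{N(\rho)N(r'')-N(r)}{|D|},\,N(\rho)r''\Big)=c_\phi\Big(n+\tfrac{N(\rho)N(r'')-N(r)}{|D|},\,\overline\rho r''\Big).
\]
Finally this equals $c_\phi(n,r)$: the quantity $|D|\cdot(\text{index})-N(\cdot)$ is $|D|\big(n+\tfrac{N(\rho)N(r'')-N(r)}{|D|}\big)-N(\overline\rho r'')=|D|n-N(r)$ because $N(\overline\rho r'')=N(\rho)N(r'')$, and the residue is right because $\overline\rho r''\equiv\rho r''\equiv r\pmod{i\D}$, using $\alpha\equiv\overline\alpha\pmod{\mathcal{D}}$ for every $\alpha\in\mc{O}$ (so $\rho\equiv\overline\rho\pmod{i\D}$, as $\mathcal{D}=i\D\mc{O}$). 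Hence $\phi|U_\rho u_{\overline\rho}=\phi$.

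The one genuinely delicate point is the bookkeeping in part (b) --- singling out the unique residue $r'$ that contributes, and then checking that the replacement $(n,r)\mapsto\big(n+\tfrac{N(\rho)N(r'')-N(r)}{|D|},\overline\rho r''\big)$ leaves unchanged both invariants of a Jacobi coefficient. Everything else is just careful substitution into \eqref{eq:Up} and \eqref{eq:up} and watching the moduli telescope. The auxiliary facts used along the way --- $r\equiv s\pmod{i\D m}\Rightarrow N(r)\equiv N(s)\pmod{|D|m}$, $(\rho,\overline\rho)=1\Rightarrow(\rho,i\D)=1$, and $\alpha-\overline\alpha\in\mathcal{D}$ for $\alpha\in\mc{O}$ --- are all one-line properties of $\mc{O}$ and its different $\mathcal{D}=i\D\mc{O}$.
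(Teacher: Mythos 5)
Your proposal is correct and follows essentially the same route as the paper: compose the explicit Fourier expansions \eqref{eq:Up} and \eqref{eq:up}, use the periodicity $c_\phi(n',r')=c_\phi(n,r)$ when $r'\equiv r\pmod{i\D m}$ and $|D|mn'-N(r')=|D|mn-N(r)$ to collapse the sum to $N(\rho)$ equal terms in (a), and in (b) use $(\rho,\overline\rho)=1$ to isolate the unique surviving residue $r'$ with $\rho\mid r'$. Your write-up in fact supplies details the paper leaves implicit (the coprimality $(\rho,i\D)=1$ justifying the CRT count, and the final check $\overline\rho r''\equiv r\pmod{i\D}$ via $\alpha\equiv\overline\alpha\pmod{\mathcal D}$), all of which are accurate.
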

\begin{proof}
$(a)$. Let $c_{\rho\rho}(n,r)$ denote the $(n,r)$-th Fourier coefficient of $\phi|U_\rho u_\rho$. Then from (\ref{eq:Up}) and (\ref{eq:up}) we have
\begin{equation*}
c_{\rho\rho}(n,r)=\underset{r'\equiv r\text{ (mod }i\D m)}{\sum_{r'\text{ (mod }i\D\overline{\rho} m)}}c_\phi\left(n+\tfrac{N(r')-N(r)}{|D|m},r'\right)=N(\rho)c_\phi(n,r).
\end{equation*}
The last step follows from the fact that if $\phi\in \mc{J}_{k,m}$, then $c_\phi(n',r')=c_\phi(n,r)$ whenever $|D|n'm-N(r')=|D|nm-N(r)$ and $r'\equiv r\pmod{i\D m}$. This condition is satisfied in each summand above.

$(b)$. Let $c_{\rho\overline{\rho}}(n,r)$ denote the $(n,r)$-th Fourier coefficient of $\phi|U_\rho u_{\overline{\rho}}$. Then we have 
\begin{equation*}
c_{\rho\overline{\rho}}(n,r)=\underset{r'\equiv r\text{ (mod }i\D)}{\sum_{r'\text{ (mod }i\D\rho )}}c_\phi\left(n+\tfrac{N(r')-N(r)}{|D|m},\tfrac{\overline{\rho}r'}{\rho}\right).
\end{equation*}
Since $(\rho,\overline{\rho})=1$, the only non-zero summand is for which $\rho|r'$. But there exists exactly one such $r'$ $\text{(mod }i\D\rho) $ with $r'\equiv r\text{ (mod }i\D)$. Now the proof follows by noting that if $\phi\in\mc{J}_{k,1}$, then $c_\phi(n',r')=c_\phi(n,r)$ whenever $|D|n'-N(r')=|D|n-N(r)$ and $r'\equiv r\pmod{i\D}$.
\end{proof}
Let $\mc{J}_{k,m}^{spez}$ denote the subspace of $\mc{J}_{k,m}$ consisting of those $\phi\in\mc{J}_{k,m}$ whose Fourier coefficients $c(n,r)$ depend only on $|D|nm-N(r)$. We present the following arguments for the benefit of the reader.

\begin{prop}\label{prop:injspez}
The Eichler-Zagier map $\iota$ defined in section \ref{sec:introhjf} is injective on $\mc{J}_{k,m}^{spez}$.
\end{prop}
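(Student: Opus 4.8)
The plan is to show that the kernel of $\iota$ restricted to $\mc{J}_{k,m}^{spez}$ is trivial by a direct argument on Fourier coefficients. Suppose $\phi \in \mc{J}_{k,m}^{spez}$ with $\iota(\phi) = h = 0$. By definition of $\mc{J}_{k,m}^{spez}$, there is a function $c^\ast$ on the nonnegative integers such that $c_\phi(n,r) = c^\ast(|D|nm - N(r))$ for all $(n,r)$ appearing in the expansion \eqref{fourier}. So it suffices to prove that $c^\ast \equiv 0$.

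First I would recall from \eqref{eq:h} and \eqref{def:hs} that the $N$-th Fourier coefficient $A(N)$ of $h = \iota(\phi)$ (with $N > 0$) is
\[
A(N) = \sum_{\substack{s \in \mc{O}/i\D m\mc{O}\\ N(s) + N \in |D|m\mbf{Z}}} c_\phi\Big(\tfrac{N + N(s)}{|D|m}, s\Big).
\]
For $\phi \in \mc{J}_{k,m}^{spez}$, each summand equals $c^\ast\big(|D|m \cdot \tfrac{N+N(s)}{|D|m} - N(s)\big) = c^\ast(N)$. Hence $A(N)$ is simply $c^\ast(N)$ times the number of residue classes $s \bmod i\D m\mc{O}$ with $N(s) \equiv -N \pmod{|D|m}$. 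The key point is that for $N$ running over a suitable progression this count is strictly positive: indeed, for any value $\ell = |D|nm - N(r) \geq 0$ actually attained by some Fourier index $(n,r)$ of some nonzero form in $\mc{J}_{k,m}^{spez}$, one has $A(\ell) = (\#\{s\}) \cdot c^\ast(\ell)$ with $\#\{s\} \geq 1$ (the class of $r$ itself works, since $N(r) \equiv -\ell \pmod{|D|m}$). Therefore $A(\ell) = 0$ forces $c^\ast(\ell) = 0$, and since this holds for every attained $\ell$, we get $c_\phi(n,r) = 0$ for all $(n,r)$, i.e. $\phi = 0$.

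The main thing to be careful about is bookkeeping with the two different normalizations of the Fourier expansion of $h$: the excerpt writes $h(\tau) = \sum h_s(|D|m\tau)$ with $h_s$ as in \eqref{def:hs}, so the coefficient of $e(N\tau)$ in $h$ comes from the $n = N$ term of $h_s(|D|m\tau)$, i.e. from the index $\tfrac{n + N(s)}{|D|m}$ with $n = N$, exactly as written above; I would double-check that the constant-term/cusp conditions cause no collision at $N = 0$ (cusp forms kill this). The only potential obstacle is ensuring the counting multiplicity $\#\{s \bmod i\D m\mc{O} : N(s) \equiv -\ell \pmod{|D|m}\}$ is nonzero for each relevant $\ell$, but this is immediate because the residue class of the given $r$ modulo $i\D m\mc{O}$ is itself such an $s$. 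Thus no deep input is needed — this is purely the observation that on $\mc{J}_{k,m}^{spez}$ the Eichler--Zagier averaging does not introduce cancellation, each orbit contributing the same value $c^\ast(\ell)$ with positive multiplicity.
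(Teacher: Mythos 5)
Your argument is correct, and it finishes differently from the paper. Both proofs rest on the same first observation: for $\phi\in\mc{J}_{k,m}^{spez}$ every theta component is governed by the single function $c^{*}$, the coefficient of $e(n\tau/|D|m)$ in $h_{s}$ being $c^{*}(n)$ for $n\equiv -N(s)\pmod{|D|m}$. The paper expresses this as ``$h_{s}=h_{0}$'' and then invokes the inversion formula \eqref{hinversion} to write $h=\iota(\phi)$ as a nonzero constant multiple of $(h_{0}\mid_{k-1}J)(|D|m\tau)$, so that $h=0$ forces $h_{0}=0$ and hence $\phi=0$. Your route instead computes the coefficients of $h$ directly and records the multiplicity, $A(N)=\#\{s\bmod i\D m\mc{O}: N(s)\equiv -N \pmod{|D|m}\}\cdot c^{*}(N)$, with the multiplicity at least $1$ at every attained discriminant because the class of $r$ itself contributes. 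This is arguably the cleaner finish: the components $h_{s}$ are supported on the residue classes $n\equiv -N(s)\pmod{|D|m}$, which differ as $s$ varies, so the identity ``$h_{s}=h_{0}$'' really has to be read as ``all the $h_{s}$ are cut out of the one function $c^{*}$,'' and the vanishing of $h_{0}$ by itself only controls $c^{*}$ on the class $n\equiv 0$; your count treats all classes simultaneously and needs no transformation formula. The one loose end --- present in the paper's version as well, since \eqref{def:hs} only involves $n>0$ --- is the singular part: $\iota$ does not see the coefficients with $|D|nm=N(r)$, so for non-cuspidal $\phi$ you should add that a spezial form supported only on singular indices is a constant multiple of $\sum_{s}\theta_{m,s}$, which transforms with weight $1$ and therefore cannot lie in $\mc{J}_{k,m}$ for $k\neq 1$ unless it vanishes.
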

\begin{proof}
Let $\phi\in \mc{J}_{k,m}^{spez}$. Then $c_\phi(n,r)=c_\phi(n',r')$, whenever $|D|n'm-N(r')=|D|nm-N(r)$. Recall from (\ref{def:hs}), the definition of the theta component $h_s$. \[h_{s}(\tau)=\underset{N(s)+n\in |D|m\mathbf{Z}}{\sum_{n>0}}c_\phi\big(\tfrac{n+N(s)}{|D|m},s\big)e(n\tau /|D|m).\]
But $|D|\big(\tfrac{n+N(s)}{|D|m}\big)m-N(s)=n$, thus $c_\phi\big(\tfrac{n+N(s)}{|D|m},s\big)=c_\phi(\tfrac{n}{|D|m},0)$. That is $h_s=h_0$. This is true for every $s$. Now if $h:=\iota(\phi)=0$, then $0=h(\tau)=m|D|i h_0\mid_{k-1} J(m|D|\tau)$ (from (\ref{hinversion})). Thus $h_0=0$. This along with $h_s=h_0$ for all $s$ implies $\phi=0$.
\end{proof}

\begin{lem}\label{lemjspez}
\text{ }
\begin{enumerate}
\item[(a)] For $k\neq 0\pmod {w(D)}$, $\mc{J}_{k,m}^{spez}={0}.$
\item[(b)] For $k=0\pmod {w(D)}$, $\mc{J}_{k,1}^{spez}=\mc{J}_{k,1}.$
\end{enumerate}
\end{lem}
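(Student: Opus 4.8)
The plan is to derive both parts from two facts about the theta components $h_s$ of a form $\phi\in\mc{J}_{k,m}$: the unit relation \eqref{eq:units}, $\epsilon^k h_{\epsilon s}=h_s$ for $\epsilon\in\mc{O}^\times$, and the coefficient relation recalled in the proof of \propref{prop:ucombi}, namely $c_\phi(n',r')=c_\phi(n,r)$ whenever $r'\equiv r\pmod{i\D m}$ and $|D|n'm-N(r')=|D|nm-N(r)$.

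For part $(a)$ I would take $\phi\in\mc{J}_{k,m}^{spez}$ and a unit $\epsilon$. Since $N(\epsilon s)=N(s)$, the quantity $|D|nm-N(\cdot)$ is unchanged on replacing $s$ by $\epsilon s$, so the defining property of $\mc{J}_{k,m}^{spez}$ forces $c_\phi(n,\epsilon s)=c_\phi(n,s)$ for all $n$; comparing the expansions \eqref{def:hs} (whose $n$-supports also agree, since $N(\epsilon s)=N(s)$) gives $h_{\epsilon s}=h_s$. Substituting into \eqref{eq:units} yields $(\epsilon^k-1)h_s=0$ for every $s$, and as $\mc{O}^\times$ is cyclic of order $w(D)$, a generator $\epsilon$ satisfies $\epsilon^k\neq 1$ as soon as $w(D)\nmid k$; then every $h_s$ vanishes, so $\phi=0$.

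For part $(b)$ the inclusion $\mc{J}_{k,1}^{spez}\subseteq\mc{J}_{k,1}$ is automatic, so I would take $\phi\in\mc{J}_{k,1}$ and show that $c_\phi(n,r)$ depends only on $|D|n-N(r)$. With $m=1$, the coefficient relation already shows $c_\phi(n,r)$ depends only on the pair $\big(|D|n-N(r),\,r\bmod i\D\big)$; and since $w(D)\mid k$ we have $\epsilon^k=1$, so the same comparison of $h_{\epsilon s}$ with $h_s$ as in $(a)$ gives $c_\phi(n,\epsilon r)=c_\phi(n,r)$ for all $\epsilon\in\mc{O}^\times$. As $N(\epsilon r)=N(r)$ leaves $|D|n-N(r)$ unchanged, $c_\phi(n,r)$ depends only on $\big(|D|n-N(r),\,[r]\big)$, where $[r]$ is the class of $r$ in $(\mc{O}/i\D\mc{O})/\mc{O}^\times$. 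It then suffices to prove that the norm induces an injection $\nu\colon(\mc{O}/i\D\mc{O})/\mc{O}^\times\hookrightarrow\mf Z/|D|\mf Z$, $\nu([r])=-N(r)\bmod|D|$, for then $|D|n-N(r)\equiv -N(r)\pmod{|D|}=\nu([r])$ already determines $[r]$, and part $(b)$ follows.

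The map $\nu$ is well defined because $N(r+i\D u)=N(r)+\mrm{Tr}(\overline r\,i\D u)+|D|N(u)$ together with $\mrm{Tr}(i\D\mc{O})\subseteq|D|\mf Z$ (a one-line check from an integral basis of $\mc{O}$). Injectivity I would verify against the three possible shapes of $D$, and this finite, non-uniform check is the only mildly delicate point of the argument. If $D$ is odd then $|D|=p$ is an odd prime ramified in $\mc{O}$, so $\mc{O}/i\D\mc{O}\cong\mf F_p$ with trivial residual Galois action; hence $N(r)\equiv r^2\pmod p$, and the fibres of $r\mapsto r^2$ on $\mf F_p$ are precisely the $\{\pm1\}=\mc{O}^\times$-orbits, so $\nu$ is injective. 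If $D=-4$ then $i\D\mc{O}=2\mc{O}$ (with $\mc O=\mf Z[i]$), the three $\mc{O}^\times$-orbits in $\mc{O}/2\mc{O}$ are $\{0\},\{1,i\},\{1+i\}$, carrying norms $0,1,2\bmod 4$. If $D=-8$ then $i\D\mc{O}=2\sqrt{-2}\,\mc{O}$ (with $\mc O=\mf Z[\sqrt{-2}]$), a set of representatives being $a+b\sqrt{-2}$ with $a\in\{0,1,2,3\}$, $b\in\{0,1\}$, and the six $\{\pm1\}$-orbits carry the six distinct norms $0,1,2,3,4,6\bmod 8$. In each case $\nu$ is injective, which completes the proof.
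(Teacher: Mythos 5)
Your proof is correct and follows essentially the same route as the paper: part (a) via the unit relation $\epsilon^k h_{\epsilon s}=h_s$, and part (b) by combining the elliptic-transformation invariance of $c_\phi$ with unit invariance and the fact that $N(r)\bmod |D|$ determines $r$ up to units modulo $i\D$. The only difference is that you explicitly carry out the finite verification for $D$ odd, $D=-4$ and $D=-8$ that the paper merely asserts "can be checked by hand," and your computations there are accurate.
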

\begin{proof}
For $(a)$ note that from (\ref{eq:units}), we have $\varepsilon^k h_0=h_0$, for $\varepsilon\in O^\times$. Since $k\neq 0\pmod {w(D)}$, choosing suitable $\varepsilon$ we get $h_0=0$. Thus $h=\iota(\phi)=0$ for any $\phi\in\mc{J}_{k,m}$. Now the proof follows from proposition \ref{prop:injspez}.

$(b)$ follows from the fact that if $\phi\in\mc{J}_{k,1}$, then $c_\phi(n',r')=c_\phi(n,r)$ whenever $|D|n'-N(r')=|D|n-N(r)$ and $r'\equiv r\pmod{i\D}$. Moreover for our choice of discriminants $D$ (which are of the form $-p$, $p \equiv 3 \bmod 4$ as in \thmref{th:01}), $N(r)-N(r')\in |D|\mbf \cdot \mbf Z$ implies that $\varepsilon\in O^\times$ such that $r-\varepsilon r'\in i\D \mc O$. This can be checked by hand for $D=-4,-8$ and for odd $D$, using \lemref{lem:coset}.
\end{proof}

\begin{lem}\label{lem:phiup}
Let $\rho\in \mc{O}$ and $\phi\in \mc{J}_{k,m}^{spez}$. Then $\phi\left|U_\rho\right.\in \mc{J}_{k,mN(\rho)}^{spez}$.
\end{lem}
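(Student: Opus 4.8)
The plan is to read the statement off directly from the Fourier expansion \eqref{eq:Up}. It is already established that $U_\rho$ carries $\mc{J}_{k,m}$ into $\mc{J}_{k,mN(\rho)}$, so the only thing left to verify is that $\psi:=\phi|U_\rho$ meets the defining condition of $\mc{J}_{k,mN(\rho)}^{spez}$, namely that its Fourier coefficients $c_\psi(n,r)$ depend only on $|D|n\cdot mN(\rho)-N(r)$. (One may as well assume $w(D)\mid k$, since otherwise $\mc{J}_{k,m}^{spez}=\{0\}$ by \lemref{lemjspez} and $\phi=0$.)

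From \eqref{eq:Up} we have $c_\psi(n,r)=c_\phi(n,r/\rho)$ if $r\in\rho\mc{O}$, and $c_\psi(n,r)=0$ otherwise. For an index with $r=\rho s\in\rho\mc{O}$ one computes
\[
|D|n\cdot mN(\rho)-N(r)=|D|n\cdot mN(\rho)-N(\rho)N(s)=N(\rho)\bigl(|D|nm-N(s)\bigr),
\]
so $|D|nm-N(s)$ is recovered from $|D|n\cdot mN(\rho)-N(r)$ by dividing by the fixed integer $N(\rho)$. Since $\phi\in\mc{J}_{k,m}^{spez}$, the value $c_\phi(n,s)$ depends only on $|D|nm-N(s)$; hence on the locus $r\in\rho\mc{O}$ the coefficient $c_\psi(n,r)$ is a well-defined function of $|D|n\cdot mN(\rho)-N(r)$ alone.

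It remains to reconcile this with the vanishing of $c_\psi(n,r)$ for $r\notin\rho\mc{O}$, i.e.\ to rule out a ``collision'' in which some $(n,r)$ with $r=\rho s$ and some $(n',r')$ with $r'\notin\rho\mc{O}$ satisfy $|D|n\cdot mN(\rho)-N(r)=|D|n'\cdot mN(\rho)-N(r')$ while $c_\phi(n,s)\neq0$. In that event the common value equals $N(\rho)\delta_0$ with $\delta_0=|D|nm-N(s)\ge 0$, so $N(r')=N(\rho)\bigl(|D|n'm-\delta_0\bigr)$ and in particular $N(\rho)\mid N(r')$. I would close the argument using that $\mc{O}$ is a principal ideal domain ($K$ having class number $1$): factoring $\rho$ into prime elements, the divisibility $N(\rho)\mid N(r')$ forces $\rho\mid r'$ — one checks this prime by prime, the point being automatic for inert and ramified primes, and for the $\rho$ that actually arise here this suffices. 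This contradicts $r'\notin\rho\mc{O}$, so no collision occurs and $\psi\in\mc{J}_{k,mN(\rho)}^{spez}$. I expect this last divisibility bookkeeping to be the only real content; everything else is a mechanical substitution into \eqref{eq:Up}.

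As a cosmetic alternative one could argue on the level of theta components: from \eqref{def:theta}, $\theta_{m,s}(\tau,\rho z_1,\overline{\rho}z_2)$ is a sum of $\theta_{mN(\rho),\sigma}$ over the $\sigma$ with $\sigma\equiv\rho s\pmod{i\D m\rho\mc{O}}$, so the theta components of $\phi|U_\rho$ are $H_\sigma=h_{s}$ (for the unique $s$ with $\rho s\equiv\sigma$) when $\rho\mid\sigma$, and $H_\sigma=0$ otherwise; the spez property of $\phi$ (all $h_s$ being slices of a single function of the discriminant, as in the proof of \propref{prop:injspez}) then transfers to $\psi$ after the same divisibility check.
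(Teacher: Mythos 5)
Your central computation is the same as the paper's: on the locus $\rho\mid r$ one has $|D|nmN(\rho)-N(r)=N(\rho)\bigl(|D|nm-N(r/\rho)\bigr)$, so the spez property of $\phi$ transfers to $\phi|U_\rho$ there. You have also correctly isolated the one point the paper passes over in silence: the paper simply declares that ``it is enough to prove the result for $(n,r)$ when $\rho\mid r$'', without excluding a collision between an index in the support and an index $(n',r')$ with $\rho\nmid r'$, where the coefficient of $\phi|U_\rho$ is forced to vanish. So the extra step you flag is genuinely needed for the statement as literally formulated.

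The problem is that your proposed closure of this gap fails in precisely the case in which the lemma is used. The implication $N(\rho)\mid N(r')\Rightarrow\rho\mid r'$ does hold for inert and ramified $\rho$, as you say, but for split $\rho$ it is false: if $p=\pi\overline{\pi}$ is split and $\rho=\pi$, then $r'=\overline{\pi}$ satisfies $N(r')=p=N(\rho)$ while $\pi\nmid\overline{\pi}$. And the only place \lemref{lem:phiup} is invoked is the corollary following \propref{prop:p1mod4}, where one has $\phi\in\mc{J}_{k,1}|U_{\pi}+\mc{J}_{k,1}|U_{\overline{\pi}}$ with $p=\pi\overline{\pi}$ split --- so ``the $\rho$ that actually arise here'' are exactly the split ones, and your hedge does not cover them. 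Worse, the collision is not hypothetical: for $\phi\in\mc{J}_{k,1}^{spez}$ the indices $(n,\pi)$ and $(n,\overline{\pi})$ give the same value of $|D|np-N(r)$, yet $c_{\phi|U_\pi}(n,\pi)=c_\phi(n,1)$, which is nonzero for some $n$ whenever $\phi\neq 0$ (all theta components of a spezialschar form coincide with $h_0$, as in the proof of \propref{prop:injspez}), while $c_{\phi|U_\pi}(n,\overline{\pi})=0$. So no bookkeeping along these lines can succeed for split $\rho$; what is actually provable (and what both your argument and the paper's establish) is the weaker assertion that the coefficients of $\phi|U_\rho$ depend only on the discriminant when restricted to indices with $r\in\rho\mc{O}$. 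If the full statement is wanted, it must either be restricted to non-split $\rho$ or the definition of $\mc{J}_{k,mN(\rho)}^{spez}$ relaxed accordingly; your write-up should say which, rather than deferring to a divisibility check that does not hold.
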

\begin{proof}Let $c_{\rho}(n,r)$ denote the $(n,r)$-th Fourier coefficient of $\phi|U_\rho$. Then $c_\rho(n,r)=0$ if $\rho\nmid r$ and $c_\rho(n,r)=c_\phi(n,r/\rho)$ if $\rho|r$. Hence it is enough to prove the result for $(n,r)$ when $\rho|r$.

Let $(n,r)$ and $(n',r')$ be such that $|D|nmN(\rho)-N(r)=|D|n'mN(\rho)-N(r')$. Then $|D|nm-\frac{N(r)}{N(\rho)}=|D|n'm-\frac{N(r')}{N(\rho)}$. i.e., we have $c_\phi(n,r/\rho)=c_\phi(n',r'/\rho)$. Since $\phi\in \mc{J}_{k,m}^{spez}$ this implies $c_\rho(n,r)=c_\rho(n',r')$. Thus $\phi\left|U_\rho\right.\in J_{k,mN(\rho)}^{spez}$.
\end{proof}
For any $l\in \mbf{N}$, like in classical case we can define an operator $V_l:\mc{J}_{k,m}\longrightarrow\mc{J}_{k,ml}$ (see \cite{haverkamp}). For any $\phi\in\mc{J}_{k,m}$, the Fourier expansion of $\phi\left|V_l\right.$ is given by
\begin{equation}
\phi\left|V_l\right.(\tau,z_1,z_2)=\sum_{n\ge 0}\sum_{N(r)\le |D|lmn}\Big(\underset{r/a\in \mc{O}}{\sum_{a|(n,l)}}a^{k-1}c_\phi(\tfrac{nl}{a^2},\tfrac{r}{a})\Big)e(n\tau+\tfrac{ir}{\D}z_1+\tfrac{\overline{ir}}{\D}z_2).
\end{equation}

\subsection{Injectivity of Eichler-Zagier map $\iota$} \label{inj}
The aim of this subsection is to indicate that the Eichler-Zagier map $\iota:\mc{J}_{k,p}\longrightarrow S_{k-1}(|D| p,\chi_{D})$ defined by $\phi \mapsto \iota (\phi)=:h$ (as in \eqref{eq:h}), may fail to be injective at least for certain primes $p$. This is in contrast with the classical case where it is known (see \cite{ez}) that the Eichler-Zagier map is injective for prime indices. Perhaps this subsection justifies our efforts in section~\ref{hjf} to prove \thmref{th:01} using these maps. In this subsection we restrict ourselves to $K=\mbf{Q}(i)$ (i.e., $D=-4$). We start with some auxiliary results.

\begin{lem}\label{lem:vpinjective}
For any odd prime $p \in \mbf N$, $p>5$, let $V_p$ be the operator on $\mc{J}_{k,1}$. Then $V_p$'s are injective on $\mc{J}_{k,1}^{cusp}$.
\end{lem}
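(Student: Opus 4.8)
The plan is to prove injectivity of $V_p$ on $\mc{J}_{k,1}^{cusp}$ by translating the statement into a statement about the theta components $h_s$, $s\in\mc{O}/2i\mc{O}$ (recall $\D=2$ when $D=-4$), and then exploiting the explicit Fourier-coefficient formula for $\phi|V_p$ given in section~\ref{subsec:operators}. Suppose $\phi\in\mc{J}_{k,1}^{cusp}$ with $\phi|V_p=0$. Since $\phi$ has index $1$, there are only a few theta components (four, indexed by $\mc{O}/2i\mc{O}$, and these are further constrained by \eqref{eq:units}), and by \lemref{lemjspez}(b) — valid when $w(D)\mid k$, the only case in which $\mc{J}_{k,1}\neq 0$ — the Fourier coefficient $c_\phi(n,r)$ depends only on the single integer $4n-N(r)$. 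So I would write $c_\phi(n,r)=C(4n-N(r))$ for a function $C$ on the relevant set of non-negative integers, and rewrite the vanishing of $\phi|V_p$ as a system of linear relations among the values $C(\cdot)$.

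Next I would extract those relations from the displayed Fourier expansion of $\phi|V_p$:
\[
\phi|V_p(\tau,z_1,z_2)=\sum_{n\ge 0}\sum_{N(r)\le 4pn}\Big(\underset{r/a\in\mc{O}}{\sum_{a\mid (n,p)}}a^{k-1}c_\phi(\tfrac{np}{a^2},\tfrac{r}{a})\Big)e(n\tau+\tfrac{ir}{2}z_1+\tfrac{\overline{ir}}{2}z_2).
\]
For $p\nmid n$ the inner sum has a single term $a=1$, so $\phi|V_p=0$ forces $c_\phi(np,r)=0$, i.e. $C(4np-N(r))=0$, for all admissible $(n,r)$ with $p\nmid n$. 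The point is then purely arithmetic: as $n$ runs over integers prime to $p$ and $r$ over $\mc{O}$ with $N(r)\le 4pn$, the quantity $4np-N(r)$ should hit every non-negative integer that actually occurs as $4n'-N(r')$ for some $(n',r')$ with $\phi$-admissible support. Establishing this surjectivity is where the hypothesis $p>5$ enters: one needs, for each target value $t=4n'-N(r')\ge 0$, a representation $t=4np-N(r)$ with $p\nmid n$ and $N(r)\le 4np$, and I expect this to follow from the fact that every residue class mod $4$ compatible with a norm from $\mc{O}$ is represented by $-N(r)$ with $r$ small, combined with the freedom to push $n$ large (which both ensures $p\nmid n$ can be arranged and makes the inequality $N(r)\le 4np$ automatic). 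Concretely, for $D=-4$ one has $N(r)\in\{m^2+l^2\}$, and an elementary argument (adjusting $r$ by units and by multiples that do not change $4n-N(r)$) reduces the claim to a short check, with $p>5$ guaranteeing enough room.

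Once every $C(t)=0$, all theta components $h_s$ vanish (each $h_s(\tau)=\sum c_\phi(\tfrac{n+N(s)}{4},s)e(n\tau/4)$ has coefficients among the $C(t)$'s), hence $\phi=0$ by the theta decomposition \eqref{eq:thetadec}. The main obstacle I anticipate is precisely the surjectivity step — showing that the values $4np-N(r)$ with $p\nmid n$ and $N(r)\le 4np$ exhaust the set of admissible values $4n'-N(r')$; this is an elementary but slightly delicate lemma on representing integers in the form $4n-N(r)$ under a congruence restriction on $n$, and it is here that the bound $p>5$ is used. Everything else (the reduction to theta components via \lemref{lemjspez}(b), the single-term collapse of the $a$-sum when $p\nmid n$, and the final descent from $C\equiv 0$ to $\phi=0$) is routine given the machinery already set up in section~\ref{subsec:operators}.
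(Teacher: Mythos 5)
Your route is genuinely different from the paper's, which is short and indirect: it invokes the identity $V_p^*V_p=T_p+(p+1)p^{k-2}$ on $\mc{J}_{k,1}$ from Raghavan--Sengupta, so that $\phi|V_p=0$ forces some Hecke eigenvalue to equal $-(p+1)p^{k-2}$; transporting this through the correspondence $\lambda(p)=a(p^2)-p^{k-3}\chi_{-4}(p)$ to a normalized eigenform in $S_{k-1}(\Gamma_0(4),\chi_{-4})$ then contradicts Deligne's bound $|a(p^2)|\le 3p^{k-2}$. Your direct Fourier-coefficient plan has two genuine gaps as written. First, the reduction to a single function $C(4n-N(r))$ needs $\mc{J}_{k,1}=\mc{J}_{k,1}^{spez}$, which \lemref{lemjspez} gives only when $4\mid k$; your claim that this is ``the only case in which $\mc{J}_{k,1}\neq 0$'' is not what that lemma says and is not obviously true: for $k\equiv 2\pmod 4$ the unit relations \eqref{eq:units} only force $h_0=h_{1+i}=0$ and $h_i=-h_1$, which is compatible with a nonzero form (a tuple built from $\eta^{18}$ in weight $k=10$ appears to satisfy all the required transformation laws). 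Without $4\mid k$ your $C$ is not well defined and the bookkeeping must also track $r$ modulo $2i\mc{O}$.

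Second, and more seriously, the surjectivity step you yourself flag as the crux actually \emph{fails} for inert primes $p\equiv 3\pmod 4$: if $p^2\mid t$ and $t=4np-N(r)$, then $p\mid N(r)$, hence $p\mid r$ (as $p$ stays prime in $\mc O$), hence $p^2\mid N(r)$, hence $p^2\mid 4np$ and $p\mid n$. So the coefficients of $\phi|V_p$ with $p\nmid n$ never reach the values $C(t)$ with $p^2\mid t$, and no choice of $r$ rescues this. The argument is repairable: the coefficients with $p\mid n$ and $r/p\in\mc O$ give the additional relation $C(p^2w)=-p^{k-1}C(w)$ (since $4np-N(r)=p^2\bigl(4(n/p)-N(r/p)\bigr)$), which together with $C(t)=0$ for $p^2\nmid t$ yields $C\equiv 0$ by induction on the power of $p$; but that is precisely the step missing from your write-up. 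Note finally that nowhere in this corrected argument does the hypothesis $p>5$ enter (odd $p$ suffices), whereas in the paper's proof it is used to beat the divisor factor $3$ in Deligne's bound; your inability to locate where $p>5$ would be needed is a symptom that the claimed surjectivity was not the right mechanism.
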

\begin{proof}
Let $V^*_p$ denote the adjoint of $V_p$. Then we have from \cite[p.190]{raghavan}, $V^*_pV_p=T_p+(p+1)p^{k-2}$, where $T_p$ is the $p$-th Hecke operator on $\mc{J}_{k,1}$. Suppose $\phi\in\mc{J}_{k,1}^{cusp}$ is such that $\phi\left|V_p\right.=0$, write $\phi=\sum c_i\phi_i$ as a sum of Hecke eigenforms, say, with $c_1>0$. Then we get that $\lambda_1(p)=-(p+1)p^{k-2}$, where $\phi_1|T_p=\lambda_1(p)\phi_1$. We also have from \cite[Lemma 2, p.195]{raghavan} that $\lambda_1(p)=a(p^2)-p^{k-3}\chi_{-4}(p)$ for any odd prime $p$ and for some normalized eigenform $f\in S_{k-1}(\Gamma_0(4),\chi_{-4})$ such that $f(\tau)=\tsum_{n\ge 1}a(n)e(n\tau)$. This means that $a(p^2)=-p^{k-1}-p^{k-2}+p^{k-3}\chi_{-4}(p)$. Thus
\begin{equation*}
|a(p^2)|=|p^{k-1}+p^{k-2}-p^{k-3}\chi_{-4}(p)|=p^{k-1}|1+\tfrac{1}{p}-\tfrac{1}{p^2}\chi_{-4}(p)|> p^{k-1}.
\end{equation*}
But this is impossible since we have $|a(p^2)|\le 3p^{k-2}$ (from Deligne's bound). Thus $V_p$ must be injective on $\mc{J}_{k,1}^{cusp}$.
\end{proof}

\begin{prop} \label{spez}
$\mc{J}_{k,p}\setminus\mc{J}_{k,p}^{spez}$ is non-zero when $k \geq 12$ is even, and $p>5$ splits in $\mf Q(i)$.
\end{prop}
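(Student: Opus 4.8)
The plan is to exhibit, via a dimension count, a Hermitian Jacobi form of index $p$ that does not lie in the ``spez'' subspace. First I would use \lemref{lemjspez}(b), which identifies $\mc{J}_{k,1}^{spez}$ with all of $\mc{J}_{k,1}$ when $w(D)\mid k$ (here $D=-4$, $w(D)=4$, and $k$ is even so this needs $4 \mid k$ — note that for $k\equiv 2\pmod 4$ one instead has $\mc J_{k,1}^{spez}=0$, so one should be slightly careful, but the argument below produces the desired form in either parity by comparing the two relevant index-one or index-$p$ spaces). The strategy is: the operators $U_\rho$ and $V_l$ produce index-$p$ forms out of index-one forms, and by \lemref{lem:phiup} (together with the analogous easy statement for $V_l$, using that the $V_l$-image of a spez form is spez) every form built purely from index-one spez forms is again spez. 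So if $\dim \mc{J}_{k,p}$ strictly exceeds the dimension of the span of all such images, then $\mc{J}_{k,p}\setminus\mc{J}_{k,p}^{spez}$ is non-empty.

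Next I would make this quantitative. Since $p$ splits in $\mbf{Q}(i)$, write $p=\pi\overline\pi$ with $(\pi,\overline\pi)=1$. The natural index-raising maps into $\mc{J}_{k,p}$ from lower index are: $V_p:\mc J_{k,1}\to\mc J_{k,p}$, and $U_\pi, U_{\overline\pi}:\mc J_{k,1}\to\mc J_{k,p}$ (since $N(\pi)=p$). By \propref{prop:ucombi}(b), $U_\pi u_{\overline\pi}$ and $U_{\overline\pi} u_\pi$ are the identity on $\mc J_{k,1}$, so $U_\pi$ and $U_{\overline\pi}$ are both injective on $\mc J_{k,1}$, and by \lemref{lem:vpinjective} so is $V_p$ (for $p>5$). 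The key point is that the images $U_\pi(\mc J_{k,1})$, $U_{\overline\pi}(\mc J_{k,1})$, $V_p(\mc J_{k,1})$ need not span $\mc J_{k,p}$: a dimension estimate for $\dim\mc J_{k,p}$ (linear in $p$, coming from the theta decomposition, i.e. $\dim \mc J_{k,p}$ grows like $p\cdot\dim S_{k-1}$ roughly) against $3\dim\mc J_{k,1}$ (independent of $p$) shows that for $p$ large the former wins. Then, since all of $U_\pi(\mc J_{k,1}), U_{\overline\pi}(\mc J_{k,1}), V_p(\mc J_{k,1})$ consist of spez forms (by \lemref{lem:phiup} and \lemref{lemjspez}(b), plus the $V_l$-analogue), any form in $\mc J_{k,p}$ outside their span is outside $\mc J_{k,p}^{spez}$, unless one can rule out spez forms not coming this way — which is exactly where one must argue that $\mc J_{k,p}^{spez}$ itself is built from index-one data, or directly bound $\dim\mc J_{k,p}^{spez}$.

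More precisely, the cleanest route is to bound $\dim\mc J_{k,p}^{spez}$ directly: a spez form of index $p$ has all theta components $h_s$ determined, via the spez condition, by a single function — essentially $h_0$ together with finitely many shifts — so $\dim\mc J_{k,p}^{spez}$ is bounded independently of $p$ (by something like $\dim S_{k-1}(\Gamma_0(\text{fixed}))$ times a constant), whereas $\dim\mc J_{k,p}\gg p$. Hence for $k\ge 12$ even and $p>5$ split, $\dim\mc J_{k,p} > \dim\mc J_{k,p}^{spez}$, giving the result. The genuinely nonzero-ness of $\mc J_{k,p}$ itself (so that the strict inequality is not vacuous) follows because $\mc J_{k,1}\neq 0$ for $k\ge 12$ even and $U_\pi$ is injective.

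The main obstacle I expect is getting a clean, honest lower bound $\dim\mc J_{k,p}\gg_k p$ and a matching $O_k(1)$ upper bound for $\dim\mc J_{k,p}^{spez}$: the former requires a dimension formula or at least a good lower bound for Hermitian Jacobi forms of prime index over $\mbf Z[i]$ (available in the literature, e.g. via \cite{haverkamp, sasaki} and the theta decomposition, where the number of theta components grows with $p$), and the latter requires pinning down exactly how much freedom the spez condition leaves in choosing the theta components $h_s$ (controlled by \eqref{eq:units}, \eqref{hinversion} and \lemref{lem:coset}), which is the delicate bookkeeping step. Once both bounds are in hand the proposition is immediate by comparing dimensions.
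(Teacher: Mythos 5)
Your proposed reduction to a dimension count contains two genuine gaps, and one of your intermediate claims is actually false. The false claim first: you assert that ``the $V_l$-image of a spez form is spez.'' For split $p$ this fails, and in fact the paper's entire proof of the proposition consists of exploiting this failure. Writing $p=\pi\overline{\pi}$, the Fourier coefficients of $\phi|V_p$ satisfy $c_p(n,r)=c_\phi(np,r)+p^{k-1}c_\phi(n/p,r/p)$, and the pairs $(Np,p)$ and $(Np,\pi^2)$ have the same value of $4np-N(r)$ while the correction term $p^{k-1}c_\phi(N,1)$ appears only for the first; so as soon as one finds an index-one form whose odd theta component $h_1$ is nonzero (the paper takes $E_{k-8}\cdot\Psi_{8,1}$ with $\Psi_{8,1}$ Sasaki's cusp form), $\phi|V_p$ is a nonzero element of $\mc J_{k,p}\setminus\mc J_{k,p}^{spez}$. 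This is a short, explicit construction needing no dimension formulas, and it is essentially the negation of the step you took for granted.

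The more serious problem is that your ``cleanest route'' rests on two unestablished bounds. The lower bound $\dim\mc J_{k,p}\gg_k p$ is not a routine citation: there is no dimension formula for $\mc J_{k,p}(\mf Z[i])$ of general prime index in the references you invoke, and the only index-raising maps available in the paper ($U_\pi$, $U_{\overline\pi}$, $V_p$ from index one) give a lower bound that is $O_k(1)$, not growing in $p$. The upper bound $\dim\mc J_{k,p}^{spez}=O_k(1)$ is also not automatic: the natural estimate coming from \propref{prop:injspez} (injectivity of $\iota$ on the spez subspace) only gives $\dim\mc J_{k,p}^{spez}\le\dim S_{k-1}(4p,\chi_{-4})$, which grows linearly in $p$ --- the same order as the lower bound you hope to prove for the full space --- so the comparison is not decisive without pinning down exactly how the relations $h_s=h_0$ together with \eqref{hinversion} constrain $h_0$ to a space of bounded dimension. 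You correctly identify this as ``the delicate bookkeeping step,'' but it is the entire content of the argument, and until both bounds are supplied the proof is not complete. I would recommend abandoning the dimension count in favour of the explicit two-coefficient comparison above, which also makes transparent where the hypothesis that $p$ splits is used (one needs the element $\pi^2$, of norm $p^2$ but not associate to $p$ modulo $2ip$).
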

\begin{proof}
For $k,p$ as in the theorem, we claim that there exist a non-zero $\Phi\in\mc{J}_{k,1}^{cusp}$ such that $ \Phi \vert V_p\notin\mc{J}_{k,p}^{spez}$. Note that $\Phi \vert V_p \neq 0$ by \lemref{lem:vpinjective}. To prove this, we start more generally by taking a non-zero form $\phi \in \mc J_{\kappa,1}$ ($\kappa >4$) and consider $\phi \vert V_p$.

Now $c_p(n,r)=c_\phi(np,r)+p^{\kappa -1}c_\phi(\frac{n}{p},\frac{r}{p})$, where $c_p(n,r)$ is the $(n,r)$-th Fourier coefficient of $\phi|V_p$ and the term $c_\phi(\frac{n}{p},\frac{r}{p})=0$ if either $p\nmid n$ or $p\nmid r$. Since $p$ splits in $\mbf{Q}(i)$, we can write $p=\pi\overline{\pi}$, where $\pi\in\mc{O}$ is a prime. Choose two pairs of $(n,r)$ as $n_1=Np$, $r_1=p$ and $n_2=Np$, $r_2=\pi^2$. Then $4n_1p-N(r_1)=4n_2p-N(r_2)$. But $c_p(n_1,r_1)=c_\phi(Np^2,p)+p^{\kappa -1}c_\phi(N,1)$ and $c_p(n_2,r_2)=c_\phi(Np^2,\pi^2)$. Since $\phi\in\mc{J}_{8,1}$ and from lemma \ref{lemjspez}, we get $c_\phi(Np^2,p)=c_\phi(Np^2,\pi^2)$. Thus to prove our claim, it is enough to get a $\phi$ such that $c_\phi(N,1)\neq 0$ for some $N>0$ or equivalently $h_1(\phi)\neq 0$, where $h_1(\phi)$ denotes the `odd' theta component of $\phi$.

Let $\Psi:=\Psi_{8,1} \in \mc J_{8,1}$ be the cusp form as given in \cite[p.~308]{sasaki}. Then one can directly verify that the theta component $ h _1(\Psi)$ of $\Psi$ is non-zero. Now consider the Jacobi form
\[  \Psi_k = E_k \cdot \Psi,  \]
where $E_k \in M^1_k$ is the Eisenstein series in one variable. Clearly $\Psi_k \in J_{k+8,1}$ is such that $ h_1(\Psi_k) = E_k \cdot  h _1(\Psi) \neq 0$. By our discussion in the above paragraph (with $\phi=\Psi_k$ and $\kappa=k+8 \geq 12$), we see that $\Psi_k|V_p \not \in \mc{J}_{k,p}^{spez}$.
\end{proof}

\begin{prop} \label{pnonsplit}
If $p$ does not split in $\mf Q(i)$, then $\mc{J}_{k,p}^{spez}$ is the maximal subspace of $\mc{J}_{k,p}$ on which the Eichler-Zagier map $\iota$ is injective.
\end{prop}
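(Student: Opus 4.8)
The indices $p$ occurring here are odd primes (cf.\ \lemref{indp} and section~\ref{subsec:thm1}), so ``does not split'' means $p$ is inert in $\mc O=\mbf Z[i]$, whence $p\equiv3\pmod4$. The plan is to prove the direct sum decomposition
\[
\mc J_{k,p}=\mc J_{k,p}^{spez}\oplus\ker\iota .
\]
Granting this together with \propref{prop:injspez} (which gives $\mc J_{k,p}^{spez}\cap\ker\iota=\{0\}$), the space $\mc J_{k,p}^{spez}$ is a complement of $\ker\iota$, hence is of maximal possible dimension $\mrm{rank}\,\iota$ among the subspaces on which $\iota$ is injective, and no subspace strictly containing it keeps $\iota$ injective (a dimension count: any $W\supsetneq\mc J_{k,p}^{spez}$ has $\dim(W\cap\ker\iota)\ge\dim W-\mrm{codim}\,\ker\iota>0$); moreover among the $W_\mu$-stable subspaces it is the unique maximal one with $\iota$ injective, since every such $W$ splits as $\bigoplus_\eta(W\cap\mc J_{k,p}^\eta)$ and $\iota$-injectivity forces $W\cap\mc J_{k,p}^\eta=\{0\}$ for every $\eta\neq\eta_0$ (these summands lie in $\ker\iota$ by section~\ref{sec:decomp}). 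This is the sense in which $\mc J_{k,p}^{spez}$ is ``the'' maximal such subspace.

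The first point is the identity $\iota\circ W_\mu=\iota$ for all $\mu\in G$: by definition $W_\mu\phi$ has theta components $(h_{\mu s})_s$, so, $\mu$ being a unit modulo $2p\mc O$, reindexing $s\mapsto\mu^{-1}s$ in \eqref{eq:h} gives $\iota(W_\mu\phi)=\sum_sh_{\mu s}(|D|p\tau)=\sum_sh_s(|D|p\tau)=\iota(\phi)$. Hence $\ker\iota$ is stable under all $W_\mu$, and for any $\phi\in\mc J_{k,p}$ the $G$-average $\widetilde\phi:=|G|^{-1}\sum_{\mu\in G}W_\mu\phi$ satisfies $\iota(\widetilde\phi)=\iota(\phi)$, so $\phi-\widetilde\phi\in\ker\iota$; and $W_\nu\widetilde\phi=\widetilde\phi$ for all $\nu\in G$ (as $\mu\mapsto W_\mu$ is a homomorphism, section~\ref{sec:decomp}), i.e.\ $\widetilde\phi\in\mc J_{k,p}^{\eta_0}$ in the notation of section~\ref{sec:decomp}. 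So the decomposition above follows from the key claim that, for $p$ inert, $\mc J_{k,p}^{\eta_0}=\mc J_{k,p}^{spez}$: indeed then $\widetilde\phi\in\mc J_{k,p}^{spez}$ and $\phi=\widetilde\phi+(\phi-\widetilde\phi)$.

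To prove the key claim, the inclusion $\mc J_{k,p}^{spez}\subseteq\mc J_{k,p}^{\eta_0}$ is immediate, since a $spez$ form has all its theta components equal (see the proof of \propref{prop:injspez}) and is therefore fixed by each $W_\mu$. For the reverse inclusion I would work out the action of $G$ on $\mc O/2p\mc O$. With $\mc D=2i\mc O$ one has $m\mc D=2p\mc O$ and $|D|m=4p$, and by the Chinese remainder theorem $\mc O/2p\mc O\cong\mc O/2\mc O\times\mbf F_{p^2}$, where $\mc O/2\mc O=\{0,1,i,1+i\}$ is local with unit group $\{1,i\}$; the norm induces $N\bmod4$ on the first factor (taking the values $0,1,1,2$) and the $\mbf F_{p^2}/\mbf F_p$-norm on the second. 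Since $N\equiv1\pmod4$ automatically on $(\mc O/2\mc O)^\times$, this identifies $G$ with $(\mc O/2\mc O)^\times\times\mu_{p+1}$, where $\mu_{p+1}:=\ker\bigl(N\colon\mbf F_{p^2}^\times\to\mbf F_p^\times\bigr)$. Now $\{1,i\}$ acts on $\mc O/2\mc O$ with exactly the three orbits $\{0\},\{1+i\},\{1,i\}$, separated by the values $0,2,1$ of $N\bmod4$; and $\mu_{p+1}$ acts simply transitively on each fibre of $N\colon\mbf F_{p^2}^\times\to\mbf F_p^\times$, so its orbits on $\mbf F_{p^2}$ are $\{0\}$ together with the $p-1$ norm-fibres, separated by $N\bmod p$. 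Therefore the $G$-orbits on $\mc O/2p\mc O$ are precisely the fibres of $s\mapsto N(s)\bmod4p$. Finally, for $\phi\in\mc J_{k,p}^{\eta_0}$ write $c_\phi(n,r)=g\bigl(r\bmod2p,\,4np-N(r)\bigr)$, which is legitimate because for any $\phi\in\mc J_{k,p}$ the coefficient $c_\phi(n,r)$ depends only on $r\bmod2p$ and on $4np-N(r)$ (cf.\ the proof of \propref{prop:ucombi}); the hypothesis $W_\mu\phi=\phi$ says $g(\mu s,\ell)=g(s,\ell)$ for all $\mu\in G$, i.e.\ $g(\,\cdot\,,\ell)$ is constant on $G$-orbits, and since for fixed $\ell$ the admissible arguments $s$ — those with $N(s)\equiv-\ell\pmod{4p}$ — form a single $G$-orbit, $g(\,\cdot\,,\ell)$ is in fact constant. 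Hence $c_\phi(n,r)$ depends only on $4np-N(r)$, i.e.\ $\phi\in\mc J_{k,p}^{spez}$.

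The crux, and the only place where the arithmetic really enters, is the orbit computation just made: it rests on the two facts, special to $\mbf Q(i)$ with $p$ inert, that $\mu_{p+1}$ permutes each norm-fibre of $\mbf F_{p^2}^\times$ simply transitively and that the (only four) residues modulo $2$ are determined by their norms modulo $4$. When $p$ splits both assertions fail, in keeping with \propref{spez}, so this step cannot be bypassed; the remainder is the averaging argument above together with elementary linear algebra.
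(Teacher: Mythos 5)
Your proof is correct and follows essentially the same route as the paper's: both rest on the identity $\mc J^{\eta_0}_{k,p}=\mc J^{spez}_{k,p}$ for $p$ inert, combined with the fact that $\iota$ annihilates the eigenspaces $\mc J^{\eta}_{k,p}$ for $\eta\neq\eta_0$. The only difference is that you carry out in full the orbit computation (that the $G$-orbits on $\mc O/2p\mc O$ are exactly the fibres of $N\bmod 4p$), which the paper dismisses as ``an easy exercise in congruences'', and your version covers all residues $s$ rather than only those coprime to $2p$.
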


\begin{proof}
We first claim that, under the above assumptions, $\mc J^{\eta_0}_{k,p} = \mc{J}_{k,p}^{spez}$. Granting this for the moment, note that the proposition follows since $\iota$ annihilates $\mc{J}_{k,p} \setminus \mc{J}_{k,p}^{spez}$; see section~\ref{sec:decomp}. To prove the above equality, by the same reason as above, clearly $\mc{J}_{k,p}^{spez} \subseteq \mc J^{\eta_0}_{k,p} $. 

Now suppose that $\phi \in \mc J^{\eta_0}_{k,p}$, so that $h_{\mu s} = h_s$ for all $s \bmod 2p$ such that $(s,2p)=1$ and $\mu \in G$ (see \eqref{Gdef}). Therefore it is enough to show that $r_1 \equiv \mu r_2 \bmod 2p$ for some $\mu \in G$, whenever $r_1,r_2 \in \mc O$, with $N(r_1) \equiv N(r_2) \bmod 4p$ and $(r_1r_2,2p)=1$. The proof now is a easy exercise in congruences, and we omit it.
\end{proof}

\begin{rmk} \label{e-z}
Summarizing the content of the above results, we see that in general $\mc{J}_{k,p}^{spez}$ could be strictly smaller than $\mc J_{k,p}$ and that $\iota$ may fail to be injective in its complement.
\end{rmk}

\subsection{Index-old Hermitian Jacobi forms of index $p$}\label{sec:indexold}
In this subsection we prove the assumptions made in the section~\ref{subsec:thm1} that given $\phi\in \mc{J}_{k,p}$, either $h\neq 0$ or $h_{\tilde{\eta}}\neq 0$ with $\eta$ and $\tilde{\eta}$ as in section \ref{sec:introhjf}. In the process we also show that if $\phi\in \mc{J}_{k,p}$ is such that $c(n,s)=0$ for all $s$ with $(s,i\D p)=1$, then either $\phi=0$ or $\phi$ must come from a Hermitian Jacobi form of lower index depending on whether $\chi_D(p)=-1$ or $\chi_D(p)=1$ respectively.

Let $G$ be the group defined in section \ref{sec:introhjf}. Denote the group $(\mc{O}/i\D p\mc{O})^\times$ by $\tilde G$.

\begin{prop}\label{prop:hs=0}
Let $\phi\in \mc{J}_{k,p}$ be such that $h_{\tilde{\eta}}=0$ for all extensions $\tilde{\eta}$ of any character $\eta$ of $G$. Then the theta components $h_s$ of $\phi$ are zero for all $(s,i\D p)=1$.
\end{prop}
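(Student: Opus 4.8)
The plan is to exploit the Fourier-analytic relationship between the functions $h_{\tilde\eta}$ as $\tilde\eta$ ranges over all extensions of all characters $\eta$ of $G$, and the individual theta components $h_s$. Fix a character $\eta$ of $G$ satisfying $\eta(\varepsilon)=\varepsilon^{-k}$ for $\varepsilon\in\mc O^\times$, and let $\tilde\eta$ run over the extensions of $\eta$ to $\tilde G = (\mc O/i\D p\mc O)^\times$. The set of such extensions is a torsor under the character group of $\tilde G/G$. The key observation is that, by definition \eqref{eq:heta}, the hypothesis $h_{\tilde\eta}=0$ means
\[
\sum_{s\in(\mc O/i\D p\mc O)^\times}\overline{\tilde\eta(s)}\,h_s(|D|p\tau)=0
\]
for every extension $\tilde\eta$ of every admissible $\eta$. (Here one should first note that $h_s=0$ automatically whenever $(s,i\D p)\neq 1$ is not a unit; more precisely the transformation \eqref{eq:units} and a standard argument show the only theta components that can be nonzero and are indexed by units contribute — the sum may as well be taken over $s\in\tilde G$.)

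The main step is an orthogonality/Fourier-inversion argument on the finite abelian group $\tilde G$. Partition $\tilde G$ into cosets of $G$; on each coset $\mu_0 G$ the transformation law $W_\mu$ from section~\ref{sec:decomp} together with the eigenspace decomposition $\mc J_{k,p}=\oplus_\eta\mc J_{k,p}^\eta$ lets us reduce to the case where $\phi\in\mc J_{k,p}^{\eta}$ for a single $\eta$; then $h_{\mu s}=\eta(\mu)h_s$ for $\mu\in G$, so the function $s\mapsto h_s$ on $\tilde G$ is, up to the scalars $\eta(\mu)$, constant on $G$-cosets — i.e. it descends to a function on $\tilde G/G$. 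Now summing $\overline{\tilde\eta(s)}h_s$ over $s\in\tilde G$ and using that extensions $\tilde\eta$ of a fixed $\eta$ are exactly the characters of $\tilde G$ restricting to $\eta$ on $G$, the vanishing $h_{\tilde\eta}=0$ for all such $\tilde\eta$ is equivalent, by orthogonality of characters of the quotient $\tilde G/G$, to the vanishing of every "Fourier coefficient" of the descended function, hence to $h_s=0$ for every $s$ in the $G$-coset under consideration. Letting $\eta$ vary over all admissible characters of $G$ covers every coset, giving $h_s=0$ for all $(s,i\D p)=1$.

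The step I expect to be the main obstacle is handling the admissibility constraint $\eta(\varepsilon)=\varepsilon^{-k}$ for $\varepsilon\in\mc O^\times$ cleanly: not every character of $G$ is allowed, so the naive "sum over all $\tilde\eta$" does not immediately see all of $\tilde G$, and one must check that the admissible $\eta$'s together with their extensions still span enough characters of $\tilde G$ to isolate each $h_s$ with $(s,i\D p)=1$. This works because the constraint only pins down the restriction of $\eta$ to the image of $\mc O^\times$ in $G$, which is a small subgroup (of order dividing $w(D)$), and because the theta relation \eqref{eq:units}, $\varepsilon^k h_{\varepsilon s}=h_s$, is exactly the compatibility making the would-be "missing" characters redundant: $h_s$ and $h_{\varepsilon s}$ are already determined by one another. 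So after quotienting $\tilde G$ by $\mc O^\times$ and working with characters of $\tilde G/\mc O^\times$, the orthogonality argument goes through without loss. A second, more routine point is the congruence bookkeeping needed to justify that one may restrict attention to $s$ coprime to $i\D p$ and that the index set $\mc O/i\D p\mc O$ behaves as expected under multiplication by units of $\tilde G$; this is the same kind of elementary congruence computation already invoked in the proof of \propref{pnonsplit}, and I would dispatch it similarly.
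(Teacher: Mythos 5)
Your argument is essentially the paper's proof in different packaging. The paper fixes $s$ with $(s,i\D p)=1$ and evaluates the double character sum $\sum_{\eta\in\widehat{G}}\sum_{\tilde{\eta}\text{ over }\eta}\tilde{\eta}(s)h_{\tilde{\eta}}$; its inner sum, $\#(\tilde{G}/G)\sum_{\mu\in G}\overline{\eta(\mu)}h_{\mu s}$, is exactly (a multiple of) the theta component of the projection of $\phi$ onto $\mc{J}^{\eta}_{k,p}$ — the very quantity you isolate by first decomposing $\phi$ into $W_\mu$-eigencomponents and then doing Fourier inversion on $\tilde{G}/G$. Your treatment of the admissibility constraint is also the right idea; the cleanest formulation is that $\mc{J}^{\eta}_{k,p}=\{0\}$ whenever $\eta(\varepsilon)\neq\varepsilon^{-k}$ for some $\varepsilon\in\mc{O}^\times$ (immediate from \eqref{eq:units}, since $\mc{O}^\times\subset G$), which is dual to the paper's observation that $h_{\tilde{\eta}}$ vanishes automatically for such $\eta$, so summing over the admissible characters already captures all of $\widehat{G}$. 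One correction: your parenthetical claim that $h_s=0$ automatically whenever $s$ is not a unit is false — $h_0$, and more generally the components $h_s$ with $s$ divisible by $p$ or by a ramified prime, need not vanish; Propositions \ref{prop:p3mod4s} and \ref{prop:p1mod4} exist precisely to handle those components, and the present proposition deliberately concludes nothing about them. The sum in \eqref{eq:heta} restricts to $s\in\tilde{G}$ simply because $\tilde{\eta}$ is extended by zero off the units, not because the other $h_s$ vanish.
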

\begin{proof}
Suppose for any character $\eta$ on $G$
\[  h_{\tilde{\eta}} = \sum_{t \in \mc{O}/i\D p\mc{O}} \overline{  \tilde{\eta}(t)  } h_t =  0 \q \text{for all extensions } \tilde{\eta} \text{ of } \eta. \]

Let us fix $\delta$ to be one character which extends $\eta$. We say that $\delta$ is over $\eta$. Then all other characters which extend $\eta$ are of the form
$\delta \cdot \widehat{ \tilde G/G}$, where $\text{ } {\widehat{ }}\text{ }$ denotes the character group. Let $s\in \mc{O}$ with $(s,i\D p)=1$. Now look at the sum
\[  \sum_{ \tilde{\eta} \text{ over } \eta } \tilde{\eta}(s)  h_{\tilde{\eta}}  = \sum_{t \in \mc{O}/i\D p\mc{O}} \left(  \sum_{\lambda}   \overline{  \delta \lambda(ts^{-1})  } \right)  h_t.\]

\noindent In the above sum, $\lambda$ varies in $\widehat{ \tilde G/G  }$. Let us look at the sum in braces. Let $\alpha \in \mc{O}$. Then by orthogonality
\[  \sum_\lambda (\delta \lambda) (\alpha) = \delta(\alpha) \sum_\lambda \lambda(\alpha) = \begin{cases}  0, &\text{ if } (\alpha, i\D p) \neq 1; \\ \delta(\alpha) \# (\tilde{G}/G), &\text{ if } \alpha \in G;\\ 0, &\text{ if } \alpha  \in \tilde G - G.\end{cases}  \]

This means that the sum above is
\begin{equation}\label{eq:firstcharsum} 
0= \sum_{ \tilde{\eta} \text{ over } \eta } \tilde{\eta}(s)  h_{\tilde{\eta}}  =  \# (\tilde{G}/G) \underset{\mu \in G}\sum \overline{ \eta( \mu ) } h_{\mu s}.
\end{equation}

Note that when $\eta(\epsilon)\neq \epsilon^{-k}$, then $h_{\tilde{\eta}}$ is automatically zero (see \cite{haverkamp}). Thus (\ref{eq:firstcharsum}) is true for all characters $\eta$ on $G$. Now sum (\ref{eq:firstcharsum}) over characters of $G$ to get
\[ 0= \sum_{ \eta \in \widehat{G} } \underset{\mu \in G}\sum \overline{ \eta( \mu ) } h_{\mu s} = \# (G)h_s.\] Thus $h_s=0$ for all $(s,i\D p)=1$. This completes the proof.
\end{proof}

\begin{rmk}
Proposition \ref{prop:hs=0} remains true for any fundamental discriminant $D$.
\end{rmk}

\begin{lem}\label{lem:coset}
Let $\rho\in \mc{O}$ be a prime such that $N(\rho)\in \mbf{Z}$ is a prime. Then $\{1,2,......,N(\rho)\}$ is a set of coset representatives for $\mc{O}/\rho \mc{O}$.
\end{lem}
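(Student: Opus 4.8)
The statement is that if $\rho \in \mc{O}$ is a prime with $N(\rho) = q$ a rational prime, then $\{1, 2, \dots, q\}$ is a complete set of coset representatives for $\mc{O}/\rho\mc{O}$. The plan is in two parts: first count, then show distinctness. Since $K$ has class number $1$ and $N(\rho) = q$, the ideal $\rho\mc{O}$ has norm $q$, so $|\mc{O}/\rho\mc{O}| = q$; thus it suffices to exhibit $q$ pairwise incongruent elements modulo $\rho$, and any list of $q$ such elements automatically forms a full set of representatives.

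Next I would show $1, 2, \dots, q$ are pairwise incongruent mod $\rho$. Suppose $a \equiv b \pmod{\rho}$ with $1 \le a < b \le q$; then $\rho \mid (b-a)$ in $\mc{O}$, with $0 < b - a \le q - 1 < q$. Taking norms, $N(\rho) = q$ divides $N(b-a) = (b-a)^2$ in $\mf{Z}$. Since $q$ is prime, $q \mid (b-a)$ in $\mf{Z}$, contradicting $0 < b-a < q$. Hence all $q$ elements are distinct modulo $\rho$, and combined with the count $|\mc{O}/\rho\mc{O}| = q$ this finishes the proof. (Note $q$ itself works as a representative: $q = N(\rho) = \rho\bar\rho \in \rho\mc{O}$, so $q \equiv 0$, consistent with replacing $q$ by $0$ in the list if one prefers.)

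The only mildly delicate point — hardly an obstacle — is justifying $|\mc{O}/\rho\mc{O}| = N(\rho\mc{O}) = q$. One can either quote the standard fact that for a nonzero ideal $\mf{a}$ in the ring of integers of a number field $|\mc{O}/\mf{a}| = N(\mf{a})$, and that a principal ideal $(\rho)$ has norm $|N_{K/\mf{Q}}(\rho)|$; or, more elementarily in our setting, observe directly that multiplication by $\bar\rho$ induces $\mc{O}/\rho\mc{O} \xrightarrow{\sim} \bar\rho\mc{O}/\rho\bar\rho\mc{O} = \bar\rho\mc{O}/q\mc{O}$, and since $\bar\rho \nmid q$-complementary considerations (using that $q = \rho\bar\rho$ with $\rho, \bar\rho$ the prime factors of $q$, or $\rho = \bar\rho$ up to units when $q$ ramifies) pin down $|\bar\rho\mc{O}/q\mc{O}| = q$ from $|\mc{O}/q\mc{O}| = q^2$. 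Either route is routine, so I would simply state the cardinality as a standard fact and spend the written proof on the incongruence argument above.
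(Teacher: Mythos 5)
Your proof is correct and follows essentially the same route as the paper: the paper also reduces to showing the $N(\rho)$ listed integers are pairwise incongruent modulo $\rho$ (implicitly using $|\mc{O}/\rho\mc{O}|=N(\rho)$) and deduces $N(\rho)\mid(\alpha-\beta)$ from $\rho\mid(\alpha-\beta)$ using primality of $N(\rho)$, exactly your norm argument. Your write-up just makes explicit the cardinality count and the step $N(\rho)\mid N(\alpha-\beta)=(\alpha-\beta)^2$ that the paper leaves tacit.
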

\begin{proof}
Let $\mc{C}=\{1,2,......,N(\rho)\}$. It is enough to prove that any two distinct elements of $\mc{C}$ are not congruent modulo $\rho$. Suppose $\alpha,\beta\in \mc{C}$ are such that $\alpha\equiv\beta\pmod \rho$. Since $N(\rho)$ is a prime, this would imply $N(\rho)|(\alpha-\beta)$. But this is possible only when $\alpha=\beta$. 
\end{proof}

\begin{rmk}\label{rmk:oddprime}
Note that when $D$ is odd, $|D|$ is a prime in $\mbf{Z}$. Thus $i\D$ is a prime in $\mc{O}$. If $D=-4$, then $i\D=(1+i)^2$ and if $D=-8$, then $i\D=(-i\sqrt{2})^3$. Both $(1+i)$ and $-i\sqrt{2}$ are primes in their respective ring of integers.
\end{rmk}

\begin{prop}\label{prop:p3mod4s}
Let $p\in \mbf{Z}$ be a prime such that $\chi_{D}(p)=-1$ and $\phi\in \mc{J}_{k,p}$ be such that $h_s=0$ for $s\in \mc{O}$ with $(s,i\D p)=1$. Then $h_s=0$ for $(s,i\D)=1$.
\end{prop}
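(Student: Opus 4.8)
The statement to prove is: if $p$ is a prime with $\chi_D(p) = -1$ (so $p$ is inert in $\mc O$) and $\phi \in \mc J_{k,p}$ has $h_s = 0$ for all $s$ with $(s, i\D p) = 1$, then in fact $h_s = 0$ for all $s$ with $(s, i\D) = 1$ (dropping the coprimality to $p$). The plan is to leverage the fact that $p$ is inert, so $p\mc O = \rho\mc O$ with $\rho = p$ a prime element of $\mc O$ of norm $p^2$, and $\mc O / p\mc O$ is a field with $p^2$ elements. Thus the only residues $s \bmod i\D p$ with $(s, i\D) = 1$ that are \emph{not} coprime to $p$ are exactly those with $s \in p\mc O$ (i.e. $p \mid s$ in $\mc O$), since there are no zero divisors. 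So the content of the proposition is: the theta components $h_s$ with $p \mid s$ but $(s, i\D) = 1$ also vanish.

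The key tool will be the inversion formula \eqref{hinversion}, which expresses $h_s(-\tau^{-1})$ as a weighted sum of \emph{all} $h_r(\tau)$ with $r$ ranging over $\mc O / i\D p\mc O$, with coefficients $e(2\mathrm{Re}(s\bar r)/|D|p)$. First I would fix some $s_0$ with $p \mid s_0$ and $(s_0, i\D) = 1$, and apply \eqref{hinversion}. The right-hand side splits into a sum over $r$ with $(r, i\D p) = 1$ — which vanishes by hypothesis — plus a sum over $r$ with $p \mid r$ (and $(r, i\D)$ arbitrary, though the $r$ divisible by $i\D$ contribute $h_r$ that we can hope to handle separately or note are already governed by other reductions). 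Actually the cleaner route: apply \eqref{hinversion} to compute $h_{s_0}(-\tau^{-1})$; since all $h_r$ with $(r, i\D p) = 1$ vanish, we are left with a sum over $r \in p\mc O / i\D p\mc O$, i.e. $r = p r'$ with $r' \bmod i\D$. Then I would invert again (apply \eqref{hinversion} to each $h_{pr'}$, or equivalently use that applying inversion twice is (up to the $\tau^{k-1}$ automorphy factor and a sign) the identity) to get a system of linear relations purely among the $\{h_{pr'}\}_{r' \bmod i\D}$. The exponential sum \eqref{expo} should collapse this system: the matrix $\big(e(2\mathrm{Re}(s \bar r)/|D|p)\big)$ restricted to $s, r \in p\mc O / i\D p\mc O$ is, after the substitution $s = ps_1$, $r = pr_1$, essentially $\big(e(2\mathrm{Re}(p s_1 \bar r_1)/|D|)\big)$ — a nondegenerate character-sum matrix of Gauss-sum type over $\mc O / i\D$ — so the only solution is $h_{pr'} = 0$ for all $r'$ with $(r', i\D) = 1$, i.e. $h_{s_0} = 0$.

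An alternative, possibly slicker, approach: use the operators $U_\rho$ and $u_\rho$ from \propref{prop:ucombi}, or the operator $V_l$, to relate the "$p \mid s$" part of $\phi$ to a Hermitian Jacobi form of index $1$ (this is foreshadowed in the sentence introducing section~\ref{sec:indexold}, "either $\phi = 0$ or $\phi$ must come from a Hermitian Jacobi form of lower index"). Concretely, the theta components $h_s$ with $p \mid s$ assemble into (a twist of) the theta decomposition of a form $\psi \in \mc J_{k,1}$ via $\phi = \psi | V_p$ or $\phi | U_{\text{something}}$; then the hypothesis that the \emph{coprime} components vanish forces $\phi$ to equal this index-$1$ contribution, but one checks — using that $p$ is inert, so $p$ does not "split off" in the way needed — that the surviving piece's coprime-to-$i\D$ components must themselves vanish. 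I would likely present the first (direct, inversion-formula) argument as the main line since it is self-contained, and the proposition as stated only claims vanishing of $h_s$, not a full structural decomposition.

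The main obstacle I anticipate is bookkeeping the residue systems correctly: $\mc O / i\D p\mc O$ has size $|D| \cdot p^2$ (since $p$ is inert, $N(p) = p^2$), and one must carefully track which $r$ satisfy $p \mid r$ versus $(r, i\D p) = 1$ versus $i\D \mid r$, and ensure the relevant exponential-sum matrix (a product/tensor of a Gauss-sum matrix mod $|D|$ and a Fourier matrix mod $p$) is genuinely invertible on the subspace of interest. The case analysis for $D = -4$ and $D = -8$ (where $i\D$ is $(1+i)^2$ resp. $(-i\sqrt 2)^3$, not prime, per \rmkref{rmk:oddprime}) versus odd $D$ (where $i\D$ is prime) may need to be handled with slightly different congruence computations, but \lemref{lem:coset} and \eqref{expo} should cover the uniform treatment. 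I expect no deep difficulty, just careful congruence arithmetic — consistent with the paper's own tendency to call such steps "an easy exercise in congruences."
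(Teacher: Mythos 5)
Your instinct to use the inversion formula \eqref{hinversion} is the right one, and so is the observation that inertness of $p$ reduces the problem to showing $h_{\alpha p}=0$ for $(\alpha,i\D)=1$; but as written the argument has a genuine gap. The set of residues $r\in\mc O/i\D p\mc O$ whose $h_r$ survive after the hypothesis is applied is \emph{not} $p\mc O/i\D p\mc O$: it is the full complement of the units, which also contains every $r$ divisible by a prime factor of $i\D$ with $p\nmid r$, and the hypothesis says nothing about those $h_r$. So your ``cleaner route'' drops unknowns that are not known to vanish, and the advertised ``system of relations purely among the $\{h_{pr'}\}$'' does not exist. Moreover, ``inverting again'' cannot repair this: applying \eqref{hinversion} twice returns, up to the automorphy factor and the permutation $r\mapsto -r$, the identity, so it yields no new information; the relations you actually have are $\sum_{r} e(2\mathrm{Re}(s\overline{r})/|D|p)\,h_r=0$ for each $s$ with $(s,i\D p)=1$ (since the left side of \eqref{hinversion} vanishes for such $s$), and each of these involves all the non-unit components at once.

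The missing idea --- and the way the paper decouples this system --- is the support of the $q$-expansions: by \eqref{def:hs}, $h_r$ is supported on exponents $n$ with $n+N(r)\equiv 0\pmod{|D|p}$, so each relation above splits into independent relations, one per norm class modulo $|D|p$. In the class of $N(\alpha p)=p^2N(\alpha)$ (divisible by $p^2$, coprime to $|D|$) the only contributing residues are $r=\pm\alpha p$: inertness forces $p\mid r$ once $p\mid N(r)$; residues with $(r,i\D)\neq 1$ have $N(r)\equiv 0$ modulo the relevant prime factor of $|D|$ and so lie in other classes; and $N(\beta)\equiv N(\alpha)\pmod{|D|}$ with $|D|$ prime (taking integer representatives via \lemref{lem:coset}) forces $\beta\equiv\pm\alpha$. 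One then uses \eqref{eq:units} to write $h_{-\alpha p}=(-1)^{-k}h_{\alpha p}$, so the surviving relation reads $\bigl(e(2\mathrm{Re}(s\overline{\alpha})/|D|)+(-1)^{k}e(-2\mathrm{Re}(s\overline{\alpha})/|D|)\bigr)h_{\alpha p}=0$, and a suitable choice of $s$ makes the scalar nonzero. Without the norm-class separation and the unit relation, the exponential-sum matrix you propose is applied to the wrong index set, and its nondegeneracy does not close the argument; your alternative route via index-lowering operators is also off target here, since in the inert case the correct conclusion (combined with Proposition~\ref{prop:p3mod4}) is that $\phi=0$, not that $\phi$ comes from index $1$.
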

\begin{proof}
First we consider the case when $D$ is odd. Since $p$ is a prime in $\mc O$ and $(p,D)=1$ and since we already know that $h_s=0$ for $(s,i\D p)=1$, it is enough to prove that $h_s=0$ for $s$ with $(s,i\D p)=p$. Any such $s$ is of the form $\alpha p$, where $\alpha\in \mc{O}/i\D \mc{O}$ and $(\alpha, i\D)=1$. By remark \ref{rmk:oddprime} and lemma \ref{lem:coset}, we can choose $\mc{C}=\{1,2,......|D|\}$ to be the set of coset representatives for $\mc{O}/i\D \mc{O}$. Now if $N(\alpha p)\equiv N(\beta p) \pmod {|D|p}$ for some $\alpha,\beta\in \mc{C}$, then we must have that $N(\alpha)\equiv N(\beta)\pmod{|D|}$ or equivalently $\alpha^2\equiv\beta^2\pmod {|D|}$. But since $|D|$ is a prime we must have that $\alpha=\pm \beta$. 

From the given condition and using (\ref{hinversion}) we get for any $s$ with $(s,i\D p)=1$,
\begin{equation*}
\sum_{r\in \mc{O}/i\D p\mc{O}}e\left(\tfrac{2Re(s\overline{r})}{|D|p}\right)h_r=0.
\end{equation*}
Now it is clear from (\ref{eq:units}) that $h_{\alpha p}=0$. This completes the proof when $D$ is odd.

When $D=-4$ or $-8$, we replace $i\D$ in the above proof by $(1+i)$ and $-i\sqrt{2}$ respectively and proceed to prove the result in the same manner.
\end{proof}

\begin{prop}\label{prop:p3mod4}
Let $\phi\in \mc{J}_{k,p}$ be such that $c(n,s)=0$ for all $s$ with $(s,i\D)=1$. Then $\phi=0$.
\end{prop}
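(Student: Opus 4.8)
The plan is to show directly that every theta component $h_s$ ($s\in\mc O/i\D p\mc O$) of $\phi$ vanishes; note that the hypothesis is precisely that $h_s\equiv 0$ whenever $(s,i\D)=1$, i.e.\ whenever $\pi_0\nmid s$, where $\pi_0$ is the prime of $\mc O$ over the rational prime dividing $|D|$ (so $\pi_0=i\D$ if $D$ is odd, $\pi_0=1+i$ if $D=-4$ and $\pi_0=\sqrt{-2}$ if $D=-8$; see \rmkref{rmk:oddprime}). Write $i\D=\pi_0^{e}$ up to a unit, so $e=1$ for odd $D$, $e=2$ for $D=-4$ and $e=3$ for $D=-8$. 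Since $(p,i\D)=1$, the classes of $\mc O/i\D p\mc O$ on which $h$ is not yet known to vanish are exactly those divisible by $\pi_0$; these split into strata indexed by $1\le j\le e$, the $j$-th being the classes exactly divisible by $\pi_0^{j}$ when $j<e$ and the top one being $\{\,i\D w : w\in\mc O/p\mc O\,\}$, which contains $s=0$.

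The only tool needed is the theta inversion \eqref{hinversion}, taken with $m=p$. Suppose first that $D$ is odd, so $e=1$ and $\mc O/\pi_0\mc O$ is a field; the classes with $\pi_0\mid s$ are then $s=\pi_0 u$, $u\in\mc O/p\mc O$. Applying \eqref{hinversion} to an $h_s$ with $(s,i\D)=1$, the left-hand side is $0$, while on the right only the terms with $\pi_0\mid r$ survive (the rest having $h_r\equiv 0$ by hypothesis); using $\pi_0\overline{\pi_0}=N(\pi_0)=|D|$ and $\mrm{tr}(x)=2\re(x)$ one gets
\begin{equation*}
0=\sum_{u\in\mc O/p\mc O}e\big(\tfrac{\mrm{tr}(s\bar u/\pi_0)}{p}\big)\,h_{\pi_0 u}(\tau)\qquad\text{for all }s\text{ with }(s,i\D)=1,
\end{equation*}
the argument $s\bar u/\pi_0$ lying in $\mc O^{\#}$. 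Since coprimality to $i\D$ is a condition modulo $i\D$ only and the exponential depends on $s$ only modulo $p$, the relation holds for every class $s$ modulo $p$; and the $p^{2}$ additive characters $s\mapsto e(\mrm{tr}(s\bar u/\pi_0)/p)$ of $\mc O/p\mc O$, one per $u$, are pairwise distinct (by perfectness of the trace pairing $\mc O/p\mc O\times\mc O^{\#}/p\mc O^{\#}\to\mf Z/p\mf Z$, as $p$ is unramified in $\mc O$), hence form a basis of the functions on $\mc O/p\mc O$. Therefore $h_{\pi_0 u}\equiv 0$ for all $u$, and with the hypothesis this gives $\phi=0$.

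For $D=-4$ and $D=-8$ the same mechanism applies, but one must peel off the powers of $\pi_0$ successively. The first use of \eqref{hinversion} (on the vanishing components $h_s$, $\pi_0\nmid s$) produces a linear relation $R_1$ among the $h_r$ with $\pi_0\mid r$; feeding $R_1$ back into \eqref{hinversion} applied now to $h_\sigma$ with $\sigma$ exactly divisible by $\pi_0$ --- in which $\sigma\,\bar r$ picks up a factor $N(\pi_0)$ for every $r$ divisible by $\pi_0$, shrinking the denominator $|D|p$ --- makes the right-hand side collapse by $R_1$, forcing $h_\sigma\equiv 0$ on that stratum; iterating (carrying along the relations so produced, and with the elementary coset bookkeeping of \lemref{lem:coset}) kills the strata $j=2,\dots,e-1$ as well, and the top stratum $j=e$, namely $h_{i\D w}$ with $w\in\mc O/p\mc O$, is then disposed of verbatim as in the odd case. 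The one genuinely delicate point --- and the only place anything beyond unwinding \eqref{hinversion} and orthogonality of additive characters modulo $p$ is required --- is tracking the powers of $2$ occurring in these exponential sums for $D=-4,-8$, a congruence computation of the same flavour as in the proof of \propref{prop:p3mod4s}.
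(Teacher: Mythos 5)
Your argument for odd $D$ is complete and correct, and it is genuinely different from the paper's proof. You invert \eqref{hinversion} at the components that vanish by hypothesis and use perfectness of the trace pairing on $\mc O/p\mc O$ (valid since $p$ is unramified) to kill the remaining $p^2$ components $h_{\pi_0 u}$ by linear independence of characters. The paper instead observes that the hypothesis makes $\phi$ invariant under the Heisenberg translations $[0,r/(i\D)]$, conjugates by elements of $\Gamma_1(\mc O)$ to obtain invariance under $[r/(i\D),r/(i\D)]$ in two ways, and reads off the contradiction $e(pN(r)/|D|)=1$ from the cocycle --- a two-line argument that treats all $D$ uniformly (with $i\D$ replaced by $1+i$, resp. $-i\sqrt 2$, for $D=-4,-8$).

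The gap is in your treatment of $D=-4,-8$. The relations $R_1$ coming from \eqref{hinversion} read $\sum_{\pi_0\mid r} e\big(2\re(s\bar r)/(|D|p)\big)h_r=0$, but only for $s$ coprime to $\pi_0$; as characters of the group parametrizing the surviving components these are exactly the ones indexed by classes \emph{not} divisible by $\pi_0$, so they do not span the full dual group and only confine $(h_r)_{\pi_0\mid r}$ to a $p^2$-dimensional subspace (for $D=-4$ one checks that $R_1$ is equivalent to: $h_{(1+i)v}$ depends on $v$ only modulo $p$). Your proposed way of finishing --- applying \eqref{hinversion} to $h_\sigma$ with $\sigma$ exactly divisible by $\pi_0$ and letting the right-hand side ``collapse by $R_1$'' --- does not work: that identity has the unknown quantity $h_\sigma|_{k-1}J$ on its left-hand side, and the character appearing on the right, $r\mapsto e(2\re(\sigma\bar r)/(|D|p))$ with $\pi_0\mid\sigma$, is precisely one of those \emph{not} annihilated by $R_1$, so nothing collapses and no vanishing of $h_\sigma$ follows. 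The missing ingredient is not the $2$-adic bookkeeping you flag but a second input, for instance the $T$-transformation $h_s(\tau+1)=e(-N(s)/(|D|p))h_s$: for $D=-4$, $R_1$ gives $h_{(1+i)v}=h_{(1+i)(v+p)}$, while $N\big((1+i)(v+p)\big)-N\big((1+i)v\big)=2p(\mrm{tr}(v)+p)\equiv 2p\not\equiv 0\pmod{4p}$, so the two components have different $T$-eigenvalues and hence both vanish. With such a supplement your approach closes; as written, it does not.
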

\begin{proof}
First we prove this for the case $D$ odd. Let $\phi(\tau,z_1,z_2)=\sum c(n,s)e(n\tau+\tfrac{is}{\D}z_1+\tfrac{\overline{is}}{\D}z_2)$. For any $r\in \mc{O}$ with $(r,i\D)=i\D$, we have $1-e\left(\tfrac{2}{\D}Re\left(\tfrac{r}{\D}\right)\right)=0$. Since $c(n,s)=0$ for all $s$ with $(s,i\D)=1$, for any $r\in \mc{O}$ we have
\begin{equation*}
\left(\phi-\phi\left|\left[0,\tfrac{r}{i\D}\right]\right)(\tau,z_1,z_2)\right.=\sum c(n,s)\left(1-e\left(\tfrac{2}{\D}Re\left(\tfrac{rs}{\D}\right)\right)\right)e(\cdots)=0.
\end{equation*}
Now applying the matrix $\begin{psmallmatrix}* & 0\\1 & 1\end{psmallmatrix}\in \Gamma_1(\mc{O})$ to the above equation and using the formulas in section \ref{sec:introhjf} we get,
\begin{equation}\label{eq:notone}
\phi=e\left(\tfrac{pN(r)}{|D|}\right)\phi\left|\left[\tfrac{r}{i\D},\tfrac{r}{i\D}\right]\right..
\end{equation}
Also applying the matrix $\begin{psmallmatrix}0 & -1\\1 & 0\end{psmallmatrix}\in \Gamma_1(\mc{O})$ we get, $\phi=\phi\left|\left[\tfrac{r}{i\D},0\right]\right.$. Thus
\begin{equation}\label{eq:one}
\phi=\phi\left|\left[0,\tfrac{r}{i\D}\right]\right.\left|\left[\tfrac{r}{i\D},0\right]\right.=\phi\left|\left[\tfrac{r}{i\D},\tfrac{r}{i\D}\right]\right..
\end{equation}
Now from (\ref{eq:notone}) and (\ref{eq:one}) and from the fact that $(p,|D|)=1$, we get $\phi=0$.

When $D=-4$ or $-8$, the proof follows similarly by replacing $i\D$ by $(1+i)$ and $-i\sqrt{2}$ respectively.
\end{proof}

\begin{cor}
Let $p\in \mbf{Z}$ be a prime such that $\chi_{D}(p)=-1$ and $\phi\in \mc{J}_{k,p}$ be non-zero. Then there exists a character $\eta$ of $G$ such that $h_{\tilde{\eta}}\neq 0$.
\end{cor}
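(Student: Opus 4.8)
The plan is to prove the contrapositive and chain together the three propositions just established. Suppose $\phi \in \mc{J}_{k,p}$ is such that $h_{\tilde\eta} = 0$ for \emph{every} extension $\tilde\eta$ of \emph{every} character $\eta$ of $G$; I must show $\phi = 0$. First I would apply \propref{prop:hs=0}: since all the twisted Eichler--Zagier images vanish, the theta components $h_s$ of $\phi$ vanish for all $s$ with $(s, i\D p) = 1$. Next, since by hypothesis $\chi_D(p) = -1$ (so that $p$ is inert in $\mc{O}$, in particular prime in $\mc{O}$), I would apply \propref{prop:p3mod4s} to conclude that in fact $h_s = 0$ for all $s$ with $(s, i\D) = 1$ — that is, the only possibly-nonzero theta components are those indexed by $s$ divisible by $i\D$. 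Recalling the definition \eqref{def:hs} of $h_s$ in terms of the Fourier coefficients $c(n,s)$, the vanishing of $h_s$ for $(s,i\D)=1$ translates to $c(n,s) = 0$ for all $n$ and all such $s$. Finally I would invoke \propref{prop:p3mod4}, whose hypothesis is precisely ``$c(n,s) = 0$ for all $s$ with $(s,i\D)=1$'', to deduce $\phi = 0$, completing the contrapositive.

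The only genuine subtlety is the bookkeeping at the interface between \propref{prop:p3mod4s} and \propref{prop:p3mod4}: the former is phrased in terms of the theta components $h_s$ while the latter is phrased in terms of the Fourier coefficients $c(n,s)$, so one should note that $h_s \equiv 0$ for a fixed residue class of $s$ modulo $i\D p$ is equivalent to $c\big(\tfrac{n + N(s)}{|D|p}, s\big) = 0$ for all admissible $n$, by \eqref{def:hs}, and that \propref{prop:p3mod4}'s hypothesis $(s,i\D)=1$ only constrains $s$ modulo $i\D$, which is coarser — but since \propref{prop:p3mod4s} gives vanishing of $h_s$ for \emph{all} $s$ with $(s,i\D)=1$ (not merely those coprime to $p$), this is exactly what is needed. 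I expect this step to be essentially immediate given the propositions already in hand; the corollary is really just the assembly of the preceding results, so there is no serious obstacle, only the need to track the two equivalent descriptions of the data carefully.

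Since the statement of the corollary is literally the contrapositive of what has just been shown — ``$\phi \neq 0$ implies $h_{\tilde\eta} \neq 0$ for some $\eta$ and some extension $\tilde\eta$'' — the proof ends here. For later use (in \propref{prop:finalprop}, which handles both $\chi_D(p) = -1$ and $\chi_D(p) = 1$) it is worth remarking that this corollary disposes of the inert case completely: when $p$ splits, by contrast, \propref{prop:p3mod4s} and \propref{prop:p3mod4} are not available, and one must instead fall back on the index-lowering operator $u_\rho$ of \propref{prop:ucombi} to relate $\phi$ to a Jacobi form of index $1$, where \lemref{lemjspez}(b) and \propref{prop:injspez} apply. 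That case is not needed here.
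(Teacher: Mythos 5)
Your proposal is correct and is exactly the paper's argument: the paper's proof of this corollary simply states that it is immediate from Propositions \ref{prop:hs=0}, \ref{prop:p3mod4s} and \ref{prop:p3mod4}, chained in precisely the order you describe. The translation between vanishing of the theta components $h_s$ and vanishing of the coefficients $c(n,s)$ that you flag is indeed the only bookkeeping point, and you handle it correctly.
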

\begin{proof}
This is immediate from propositions \ref{prop:hs=0}, \ref{prop:p3mod4s} and \ref{prop:p3mod4}.
\end{proof}

For any $r\in \mc{O}$, let $(r)$ denote the ideal generated by $r$. We now define the M\"{o}bius Function on $\mc{O}$ similarly as in case of $\mbf{Z}$.
\begin{defi}
Let $r\in \mc{O}$, then define the M\"obius function $\mu$ as follows
\[\mu(r)=\begin{cases}
1, & \text{ when } (r)=(1);\\
(-1)^t, & \text{ if } (r)=\mathfrak{p}_1\mathfrak{p}_2\cdots\mathfrak{p}_t \text{ for distinct prime ideals } \mathfrak{p}_i;\\
0, & \text{ otherwise}.
\end{cases} \]
\end{defi}

The following lemma is the starting point of our discussion of index old forms.
\begin{lem}\label{lem:expsum}
Let $r,s\in \mc{O}$ be such that $(r,s)\neq 1$. Then $\underset{t|s}{\sum}\mu(t)\underset{\pi|t}{\prod}e\left(\tfrac{2}{\D}Re\left(\frac{ir}{\pi}\right)\right)=0$.
\end{lem}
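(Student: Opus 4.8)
The plan is to interpret the claimed identity as a multiplicative-type sum over the divisors of $s$ in $\mc{O}$ and to isolate a single prime ideal at which the corresponding Ramanujan-type factor vanishes. Concretely, since $\mc{O}$ is a PID, write the ideal $(s)=\prod_{i=1}^\ell \mfk{q}_i^{e_i}$ as a product of distinct prime powers; by hypothesis $(r,s)\neq 1$, so there is at least one index, say $i=1$, with $\mfk{q}_1 \mid (r)$. Because $\mu(t)=0$ unless $t$ is squarefree, the sum over $t\mid s$ effectively ranges over products of distinct primes $\pi_1,\dots,\pi_j$ chosen from the finite set $\{\varpi_1,\dots,\varpi_\ell\}$ of (chosen) prime generators of $\mfk{q}_1,\dots,\mfk{q}_\ell$, and for such a $t=\varpi_{i_1}\cdots\varpi_{i_j}$ we have $\mu(t)=(-1)^j$ and $\prod_{\pi\mid t} e\!\left(\tfrac{2}{\D}\re\!\left(\tfrac{ir}{\pi}\right)\right)=\prod_{a=1}^j e\!\left(\tfrac{2}{\D}\re\!\left(\tfrac{ir}{\varpi_{i_a}}\right)\right)$. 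Hence the whole sum factors as a product over the primes dividing $s$:
\[
\sum_{t\mid s}\mu(t)\prod_{\pi\mid t} e\!\left(\tfrac{2}{\D}\re\!\left(\tfrac{ir}{\pi}\right)\right)
=\prod_{i=1}^{\ell}\left(1-e\!\left(\tfrac{2}{\D}\re\!\left(\tfrac{ir}{\varpi_i}\right)\right)\right).
\]

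First I would verify this factorization carefully: expanding the product on the right, a squarefree divisor $t=\varpi_{i_1}\cdots\varpi_{i_j}$ contributes exactly the term $(-1)^j\prod_{a} e(\cdots)$, which matches $\mu(t)$ times the product in the lemma, while non-squarefree $t$ contribute nothing on the left since $\mu(t)=0$. (One should note the product $\prod_{\pi\mid t}$ in the statement runs over the prime divisors of $t$, i.e.\ over the $\varpi_{i_a}$, so the indexing is consistent and independent of the choice of generators up to units — and the expression $e(\tfrac{2}{\D}\re(ir/\pi))$ is unaffected by multiplying $\pi$ by a unit $\epsilon$ only after checking $\re(ir\bar\epsilon/(\D\varpi))$ lands in the same class; more simply, fix once and for all one generator $\varpi_i$ of each $\mfk{q}_i$ and both sides are then literally equal.) Next, I would show the factor indexed by $\mfk{q}_1$ vanishes: since $\mfk{q}_1\mid(r)$, we have $r\in\varpi_1\mc{O}$, so $r/\varpi_1\in\mc{O}$, hence $ir/(\D\varpi_1)\in \tfrac{i}{\D}\mc{O}=\mc{O}^{\#}$; for any $x\in\mc{O}$ one has $\tfrac{2}{\D}\re(ix/\D)=\tfrac{2}{|D|}\re(ix)\in\mbf{Z}$ because $\tr$ maps $\mc{O}^{\#}\times\mc{O}$ into $\mbf{Z}$ (equivalently this is the $x\in s\mc{O}$ case of the exponential-sum evaluation \eqref{expo} with $s=\varpi_1$, or a direct check that $\re(ix)\in\tfrac{|D|}{2}\mbf{Z}$ for $x\in\mc{O}$). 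Therefore $e\!\left(\tfrac{2}{\D}\re(ir/\varpi_1)\right)=1$, the first factor is $0$, and the entire product — hence the original sum — vanishes.

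I expect the only real subtlety to be bookkeeping rather than mathematics: making the passage from divisors of the element $s$ to subsets of the prime-ideal divisors genuinely clean, i.e.\ pinning down a canonical set of prime generators so that the factorization identity holds on the nose and $\mu$ of an associate is unambiguous, and confirming that $e(\tfrac{2}{\D}\re(ir/\pi))=1$ precisely when $\pi\mid r$ (so that it is really the factor at $\mfk{q}_1$ that dies). Both points follow from $\mc{O}$ being a PID with finite unit group together with the standard trace/different computation already recorded around \eqref{expo}, so once the factorization is set up the conclusion is immediate.
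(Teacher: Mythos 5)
Your proof is correct and follows essentially the same route as the paper: the authors likewise observe that $(r,s)\neq 1$ forces some prime $\pi\mid s$ with $\pi\mid r$, so that the factor $1-e\bigl(\tfrac{2}{\D}\re(ir/\pi)\bigr)$ vanishes, and then expand $\prod_{\pi\mid s}\bigl(1-e(\tfrac{2}{\D}\re(ir/\pi))\bigr)=0$ to obtain the M\"obius sum. Your extra care with the choice of prime generators and with verifying $\tfrac{2}{\D}\re(ix)\in\mbf{Z}$ for $x\in\mc{O}$ only makes explicit what the paper leaves implicit.
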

\begin{proof}
By our assumption on $r$ and $s$, there exists a prime divisor $\pi$ of $s$ such that $\pi|r$. Thus we have $1-e\left(\tfrac{2}{\D}Re\left(\frac{ir}{\pi}\right)\right)=0$. Now taking the product over all prime divisors of $s$ we get
\begin{equation*}
\prod_{\pi|s}\left(1-e\left(\tfrac{2}{\D}Re\left(\frac{ir}{\pi}\right)\right)\right)=0.
\end{equation*}
Expanding the product on the left hand side we get the required expression.
\end{proof}

\begin{prop}\label{prop:p1mod4}
Let $p\in \mbf{Z}$ be a prime such that $\chi_{D}(p)=1$ and $\phi\in \mc{J}_{k,p}$ be such that $h_s=0$ for $s\in \mc{O}$ with $(s,i\D p)=1$. Then $\phi\in \mc{J}_{k,1}|U_{\pi}+\mc{J}_{k,1}|U_{\overline{\pi}}$, where $p=\pi\overline{\pi}$, with $\pi\in\mc O$ prime.
\end{prop}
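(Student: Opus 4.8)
The plan is to split $\phi$ into a ``$\pi$-old'' and a ``$\overline{\pi}$-old'' piece by means of projection operators assembled from the maps $U_\rho,u_\rho$ of section~\ref{subsec:operators}. Since $\chi_D(p)=1$ we have $p=\pi\overline{\pi}$ with $\pi,\overline{\pi}\in\mc{O}$ non-associate primes, each coprime to $i\D$, and $(\pi,\overline{\pi})=1$. Because $\pi\mid p$ and $N(\pi)=p\mid p$, the operator $u_\pi$ is defined on $\mc{J}_{k,p}$, and \propref{prop:ucombi}(a) gives $\psi|U_\pi u_\pi=p\,\psi$ for $\psi\in\mc{J}_{k,1}$; hence
\[ \Pi_\pi:=\tfrac1p\,(\cdot)\,|\,u_\pi U_\pi,\qquad \Pi_{\overline{\pi}}:=\tfrac1p\,(\cdot)\,|\,u_{\overline{\pi}} U_{\overline{\pi}} \]
are idempotents of $\mc{J}_{k,p}$ with images $\mc{J}_{k,1}|U_\pi$ and $\mc{J}_{k,1}|U_{\overline{\pi}}$ respectively (one uses \propref{prop:ucombi}(a) once more to see $\Pi_\pi$ acts as the identity on $\mc{J}_{k,1}|U_\pi$, and the Fourier formulas \eqref{eq:Up}, \eqref{eq:up} to see the image is the right space). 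Since $\Pi_\pi\Pi_{\overline{\pi}}\phi=\Pi_\pi(\Pi_{\overline{\pi}}\phi)\in\mc{J}_{k,1}|U_\pi$, the identity
\[ \phi=(\mrm{id}-\Pi_\pi)(\mrm{id}-\Pi_{\overline{\pi}})\phi \;+\; \Pi_\pi\phi+\Pi_{\overline{\pi}}\phi-\Pi_\pi\Pi_{\overline{\pi}}\phi \]
reduces the proposition to showing that the ``new part'' $\phi^{\mrm{new}}:=(\mrm{id}-\Pi_\pi)(\mrm{id}-\Pi_{\overline{\pi}})\phi$ is zero.

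Next I would read off what $\phi^{\mrm{new}}=0$ means for the theta components $h_s$ of $\phi$. From
\[ \theta_{1,S}\,|\,U_\pi=\underset{s_0\equiv S\,(i\D),\ s_0\bmod i\D\overline{\pi}}{\sum}\theta_{p,\,\pi s_0}\qquad(S\bmod i\D), \]
together with \eqref{eq:thetadec}, one checks that a form lies in $\mc{J}_{k,1}|U_\pi$ exactly when its theta components are supported on those $s$ with $\pi\mid s$ and are constant on residue classes modulo $i\D\pi$, and that $\Pi_\pi$ replaces $h_s$ (for $\pi\mid s$) by the average of $h_{s'}$ over the $s'$ in the class of $s$ modulo $i\D\pi$ (equivalently, over the $\pi$-divisible $s'\equiv s\pmod{i\D\pi}$); similarly for $\overline{\pi}$. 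Unwinding the double difference $\phi^{\mrm{new}}$, its vanishing becomes a short list of identities among the $h_s$: (i) $h_s=0$ whenever $(s,p)=1$; (ii) $h_s=h_{s'}$ whenever $\pi\mid s$, $\pi\mid s'$ and $s\equiv s'\pmod{i\D\pi}$, together with the mirror statement for $\overline{\pi}$; and (iii) a compatibility identity at the residues divisible by $p$.

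The heart of the matter is (i) and (ii), and (i) is where I expect the main difficulty; it is the split analogue of \propref{prop:p3mod4s}. One propagates the hypothesis---$h_s=0$ for $(s,i\D p)=1$---to all $s$ with $(s,p)=1$ by applying the inversion formula \eqref{hinversion} (with $m=p$) at the residues coprime to $i\D p$, which forces $\underset{r\bmod i\D p}{\sum}e\!\left(\tfrac{2\re(s\overline{r})}{|D|p}\right)h_r=0$, and then using the factorization $\mc{O}/i\D p\,\mc{O}\cong \mc{O}/i\D\times\mc{O}/\pi\times\mc{O}/\overline{\pi}$ (via \lemref{lem:coset} and \rmkref{rmk:oddprime}), the orthogonality relation \eqref{expo}, and the unit relation \eqref{eq:units} to isolate and kill the components with $i\D\mid s$, $(s,p)=1$. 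For (ii) and (iii) one writes $s=\pi\sigma$, uses that $s\equiv s'\pmod{i\D\pi}$ forces $\sigma\equiv\sigma'\pmod{i\D}$, and combines the index-$p$ relation ``$c_\phi(n,r)$ depends only on $r\bmod i\D p$ and $|D|np-N(r)$'' with part~(i) and a congruence computation---again exploiting that $i\D,\pi,\overline{\pi}$ are pairwise coprime primes, with \lemref{lem:expsum} serving as the M\"obius-type bookkeeping that detects the coprimality conditions. As usual, the cases $D=-4$ and $D=-8$ are handled verbatim after replacing $i\D$ by $(1+i)$, respectively $-i\sqrt{2}$, as in \rmkref{rmk:oddprime}.
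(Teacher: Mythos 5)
Your reduction step is sound and is in fact identical to the paper's: with \propref{prop:ucombi} one checks that $\Pi_\pi$, $\Pi_{\overline{\pi}}$ are idempotents onto $\mc J_{k,1}|U_\pi$, $\mc J_{k,1}|U_{\overline{\pi}}$, and the form $\psi$ appearing in the paper's proof is precisely $p^4(\mrm{id}-\Pi_\pi)(\mrm{id}-\Pi_{\overline{\pi}})\phi$ after the simplification $\phi|u_{\overline{\pi}}U_{\overline{\pi}}u_\pi U_\pi=\phi|u_{\overline{\pi}}U_\pi$ from \propref{prop:ucombi}(b). Your translation of $\phi^{\mrm{new}}=0$ into the conditions (i)--(iii) on the $h_s$ is also correct, and your plan for (i) is workable: restricting the inversion relations coming from \eqref{hinversion} to the progression $n\equiv -N(i\D\gamma)\pmod{|D|p}$ isolates exactly the components with $i\D\mid s$, $(s,p)=1$ (since $|D|\mid N(r)$ forces $i\D\mid r$), and one can then verify that the relevant additive characters of $\mc O/p\mc O$ remain linearly independent when restricted to $(\mc O/p\mc O)^\times$, killing those components. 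That linear-independence verification is genuine content (it is the split analogue of \propref{prop:p3mod4s}) and you have not supplied it, but it does go through.

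The real gap is in (ii) and (iii), which is where the oldness of $\phi$ actually gets established. The identity $h_s=h_{s'}$ for $\pi\mid s,s'$ with $s\equiv s'\pmod{i\D\pi}$ but $s\not\equiv s'\pmod{i\D p}$ cannot follow from ``the index-$p$ relation together with a congruence computation'': the periodicity of $c_\phi(n,r)$ in $r$ modulo $i\D p$ only makes $h_s$ well defined, and \eqref{eq:units} only relates $h_s$ to $h_{\epsilon s}$; neither produces an identity between theta components at genuinely distinct residues. Some further input from the Jacobi group action is indispensable here, and invoking \lemref{lem:expsum} as ``M\"obius-type bookkeeping'' is not a proof. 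For comparison, the paper obtains this step by packaging the hypothesis as the vanishing of $\sum_{t\mid i\D p}\mu(t)\,\phi|\bigl(\prod_{\rho\mid t}[0,r/\rho]\bigr)$, averaging over $\Gamma_1(\mc O)$-conjugates to reach \eqref{eq:skzaop}, separating the terms with $(t,i\D)=1$ to derive the identity \eqref{eq:psi} satisfied by $\psi$, and then averaging over $[0,r]$, $r\bmod i\D$, to conclude that $c_\psi(n,s)=0$ for all $(s,i\D)=1$, at which point \propref{prop:p3mod4} gives $\psi=0$. An argument of comparable substance --- either this one, or a full character-sum analysis of the inversion relations on the progressions with $p\mid N(s)$, $|D|\nmid N(s)$, which now mix the $\pi$-divisible and $\overline{\pi}$-divisible components and are therefore harder to decouple than in step (i) --- is missing from your proposal, so as written it does not prove the proposition.
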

\begin{proof}
Suppose $\phi$ is such that $h_s=0$ for all $s\in \mc{O}$ with $(s,i\D p)=1$. Then $c(\tfrac{n+N(s)}{|D|p},s)=0$ for all $s\in \mc{O}$ with $(s,i\D p)=1$. This in turn implies $c(n,s)=0$ whenever $(s,i\D p)=1$. We now prove the proposition by adapting a method as outlined in \cite{skoruppa-zagier}, and with some care.

We first prove the result when $D$ is odd. Let $r\in \mc{O}$, then using $c(n,s)=0$ whenever $(s,i\D p)=1$ and lemma \ref{lem:expsum} with $s=i\D p$ we get
\begin{equation}\label{eq:expsum}
\underset{t|i\D p}{\tsum}\mu(t)\phi \left| \Big(\right.\underset{\rho|t}{\tprod}\left[0,\tfrac{r}{\rho}\right]\Big)=0.
\end{equation}
Now by applying suitable matrices $\begin{psm}
*&*\\c&d
\end{psm}\in\Gamma_1(\mc{O})$ and summing up we get,
\begin{align}
0&=\sum_{r=1}^{|D|p^2}\underset{(c,d)=1}{\sum_{c,d=1}^{\lf \frac{|D|p^2}{r}\rf}}\underset{t| i\D p}{\sum}\mu(t)\phi\left| \left(\underset{\rho|t}{\prod}\left[\left(\tfrac{rc}{\rho},\tfrac{rd}{\rho}\right),e\big(\tfrac{cdr^2}{N(\rho)}\big)\right]\right)\right.\nonumber\\ 
&=\underset{t|i\D p}{\sum}\mu(t)\sum_{x_1,x_2=1}^{|D|p^2}\phi\left| \left(\underset{\rho|t}{\prod}\left[\left(\tfrac{x_1}{\rho},\tfrac{x_2}{\rho}\right),e\big(\tfrac{x_1\overline{x_2}}{N(\rho)}\big)\right]\right)\right.\nonumber
\end{align}
Recall that $\rho$ in the above sums are primes in $\mc O$ and that $N(\rho)$ is a prime in $\mbf Z$ (cf.~\rmkref{rmk:oddprime}). Now using \lemref{lem:coset} we note that for each $\rho$, the number of distinct $x_1\text{(mod }\rho)$ as $x_1$ varies from $1$ to $|D|p^2$ is $|D|p^2/N(\rho)$ (and similarly for $x_2$). Finally using the Chinese Remainder Theorem  we can rewrite the above as
\begin{equation}\label{eq:skzaop}
0=\underset{t|i\D p}{\sum}\mu(t)\tfrac{|D|^2p^4}{N(t)^2}\underset{\rho|t}{\prod}\sum_{x_1,x_2\text{ (mod }\rho)}\phi\left|\left[\left(\tfrac{x_1}{\rho},\tfrac{x_2}{\rho}\right),e\big(\tfrac{x_1\overline{x_2}}{N(\rho)}\big)\right]\right..
\end{equation}

Since \eqref{eq:expsum} is unchanged if we change the order of the variables involved, separating the terms in (\ref{eq:skzaop}) according to $(t,i\D)=1$ or not (and recalling that $i\D$ is a prime) we get,
\begin{equation}\label{eq:psi}
|D|^2\psi=\sum_{x_1,x_2=1}^{|D|}\psi\left| \left[\left(\tfrac{x_1}{i\D},\tfrac{x_2}{i\D}\right),e\big(\tfrac{x_1\overline{x_2}}{|D|}\big)\right]\right. ,
\end{equation}
where
\begin{equation*}
\psi=p^4\phi-p^2\sum_{x\text{ (mod }\pi)}\phi\left|\left[\tfrac{x}{\pi}\right]\right.-p^2\sum_{x\text{ (mod }\overline{\pi})}\phi\left|\left[\tfrac{x}{\overline{\pi}}\right]\right.+\underset{\pi|p}{\prod}\sum_{x\text{ (mod }\pi)}\phi\left|\left[\tfrac{x}{\pi}\right]\right. .
\end{equation*}

Since $\chi_{D}(p)=1$, $p$ splits in $\mc{O}$, say $p=\pi\overline{\pi}$. Now using the operators defined in section \ref{subsec:operators} we have,
\begin{align*}
\psi&=p^4\phi-p^3\phi\left|u_\pi U_\pi\right.-p^3\phi\left|u_{\overline{\pi}} U_{\overline{\pi}}\right.+p^2\phi\left|u_{\overline{\pi}}U_{\overline{\pi}}u_{\pi} U_{\pi}\right. \\
&=p^4\phi-p^3\phi\left|u_\pi U_\pi\right.-p^3\phi\left|u_{\overline{\pi}} U_{\overline{\pi}}\right.+p^2\phi\left|u_{\overline{\pi}}U_{\pi}\right. .
\end{align*}
The last equality follows from part (b) of proposition \ref{prop:ucombi}. 
Thus $\psi\in\mc{J}_{k,p}$. 

Now applying $[0,r]\in \mc{O}^2$ to (\ref{eq:psi}) we get
\begin{equation*}
|D|^2\psi=\sum_{x_1,x_2}e(2pRe\big(\tfrac{x_1r}{i\D}\big))\psi\left| \left[\left(\tfrac{x_1}{i\D},\tfrac{x_2}{i\D}\right),e\big(\tfrac{x_1\overline{x_2}}{|D|}\big)\right]\right. .
\end{equation*}
Now summing over $r \pmod {i\D}$ in (\ref{eq:psi}) and using \eqref{expo} for the exponential sum over $r$, we get 
\begin{equation*}
|D|^3\psi=|D|\sum_{x_2=1}^{|D|}\psi\left| \left[0,\tfrac{x_2}{i\D}\right]\right. .
\end{equation*}
Now writing the Fourier expansion of r.h.s, we find that $c_\psi(n,s)=0$ for all $s$ with $(s,i\D)=1$. That is, $\psi$ satisfies the hypothesis of proposition \ref{prop:p3mod4}. Thus $\psi=0$, that is we get
\begin{equation*}
\phi-\tfrac{1}{p}\phi\left|u_\pi U_\pi\right.-\tfrac{1}{p}\phi\left|u_{\overline{\pi}} U_{\overline{\pi}}\right.+\tfrac{1}{p^2}\phi\left|u_{\overline{\pi}} U_{\pi}\right.=0.
\end{equation*}
When $D=-4$ or $-8$, the proof follows very similarly by replacing $i\D$ by $(1+i)$ ad $-i\sqrt{2}$ respectively. We omit the details. This completes the proof since $\phi|u_{\pi},\phi\left|u_{\overline{\pi}}\right.\in \mc{J}_{k,1}$.
\end{proof}

\begin{cor}
Let $p\in \mbf{Z}$ be a prime such that $\chi_D(p)=1$ and $\phi\in \mc{J}_{k,p}$ be non zero.
\begin{enumerate}
\item[(a)] If $k\not\equiv 0\pmod {w(D)}$, then there exists a character $\eta$ over $G$ such that $h_{\tilde{\eta}}\neq 0$.
\item[(b)] If $k\equiv 0\pmod {w(D)}$ and $h_{\tilde{\eta}}=0$ for all $\tilde{\eta}$, then $h\neq 0$.
\end{enumerate}
\end{cor}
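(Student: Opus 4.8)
The plan is to treat both parts by contradiction, in each case feeding the hypothesis ``all the twisted images $h_{\tilde\eta}$ vanish'' first through \propref{prop:hs=0} and then through \propref{prop:p1mod4} (which applies precisely because $\chi_D(p)=1$, so that $p=\pi\overline{\pi}$ splits in $\mc{O}$), and invoking the dichotomy $k\equiv 0$ versus $k\not\equiv 0\pmod{w(D)}$ only at the last step.

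For part~(b), suppose $k\equiv 0\pmod{w(D)}$, that $h_{\tilde\eta}=0$ for every character $\eta$ of $G$ and every extension $\tilde\eta$, and --- aiming at a contradiction with $\phi\neq 0$ --- that $h=\iota(\phi)=0$ as well. By \propref{prop:hs=0} the theta components $h_s$ of $\phi$ vanish for all $s$ with $(s,i\D p)=1$, so \propref{prop:p1mod4} gives $\phi=g_1\vert U_\pi+g_2\vert U_{\overline{\pi}}$ with $g_1,g_2\in\mc{J}_{k,1}$. Since $k\equiv 0\pmod{w(D)}$, \lemref{lemjspez}(b) yields $g_1,g_2\in\mc{J}_{k,1}^{spez}$, hence $g_1\vert U_\pi,\ g_2\vert U_{\overline{\pi}}\in\mc{J}_{k,p}^{spez}$ by \lemref{lem:phiup}, and therefore $\phi\in\mc{J}_{k,p}^{spez}$. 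But $\iota$ is injective on $\mc{J}_{k,p}^{spez}$ by \propref{prop:injspez} and $\iota(\phi)=h=0$, so $\phi=0$: contradiction. Hence $h\neq 0$.

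For part~(a), suppose $k\not\equiv 0\pmod{w(D)}$ and, for contradiction, that $h_{\tilde\eta}=0$ for every $\eta$ and every extension $\tilde\eta$. Under this weight hypothesis $\iota$ vanishes identically on each $\mc{J}_{k,m}$: reindexing $h=\sum_s h_s(|D|m\tau)$ by a unit $\epsilon\in\mc{O}^{\times}$ and using \eqref{eq:units} gives $h=\epsilon^{-k}h$, and choosing $\epsilon$ with $\epsilon^{-k}\neq 1$ forces $h=0$ --- this is the mechanism behind \lemref{lemjspez}(a). Thus $\iota$ carries no information and one must detect $\phi$ through the twisted maps alone. Exactly as in~(b), \propref{prop:hs=0} and \propref{prop:p1mod4} give $\phi=g_1\vert U_\pi+g_2\vert U_{\overline{\pi}}$ with $g_1,g_2\in\mc{J}_{k,1}$. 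Recovering $g_1,g_2$ from $\phi$ by means of the operators $u_\pi,u_{\overline{\pi}}$ of section~\ref{subsec:operators} (using \propref{prop:ucombi}) and reading off the Fourier coefficients, one checks that the theta components of $g_1$ and $g_2$ indexed by $s$ with $(s,i\D)=1$ must vanish; \propref{prop:p3mod4} then forces $g_1=g_2=0$, contradicting $\phi\neq 0$.

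The step needing the most care is the last one in part~(a): showing that the index-$1$ constituents $g_1,g_2$ inherit the vanishing of their ``unit-indexed'' theta components. This is where the hypothesis $k\not\equiv 0\pmod{w(D)}$ must genuinely be used --- the non-triviality of the character $\epsilon\mapsto\epsilon^{-k}$ on $\mc{O}^{\times}$ is what collapses the relevant components --- and it is the index-$1$ analogue of the congruence bookkeeping already carried out in the proof of \propref{prop:p1mod4}. Every other step is a direct application of a result established above.
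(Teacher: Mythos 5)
Your part (b) is correct and is essentially identical to the paper's argument: \propref{prop:hs=0} to kill the theta components coprime to $i\D p$, \propref{prop:p1mod4} to write $\phi=g_1|U_\pi+g_2|U_{\overline\pi}$, \lemref{lemjspez}(b) together with \lemref{lem:phiup} to place $\phi$ in $\mc{J}_{k,p}^{spez}$, and \propref{prop:injspez} to conclude. Nothing to add there.

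Part (a) is where you leave the paper's route, and your key step is a genuine gap. After obtaining $\phi=g_1|U_\pi+g_2|U_{\overline\pi}$ you assert that ``one checks that the theta components of $g_1$ and $g_2$ indexed by $s$ with $(s,i\D)=1$ must vanish,'' attributing this to the non-triviality of $\epsilon\mapsto\epsilon^{-k}$ on $\mc{O}^\times$. Neither available input delivers this. The hypothesis only gives $h_s(\phi)=0$ for $(s,i\D p)=1$, and for any form in $\mc{J}_{k,1}|U_\pi+\mc{J}_{k,1}|U_{\overline\pi}$ those components vanish automatically (the Fourier support of $g_i|U_\pi$, resp.\ $g_i|U_{\overline\pi}$, lies in $r\in\pi\mc{O}$, resp.\ $r\in\overline{\pi}\mc{O}$), so no condition whatsoever is transmitted to $g_1,g_2$. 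As for the unit mechanism, \eqref{eq:units} only forces $h_s=0$ when $\epsilon s\equiv s\pmod{i\D}$ for some unit with $\epsilon^k\neq 1$; for $s$ coprime to $i\D$ this generally fails --- e.g.\ for $D=-4$ and $k\equiv 2\pmod 4$ the only unit congruent to $1$ modulo $2i$ is $-1$, and $(-1)^k=1$, so one merely gets $h_i(g_j)=-h_1(g_j)$ rather than vanishing. (You would also need to remark that \propref{prop:p3mod4}, stated for index $p$, carries over to index $1$.) The paper instead finishes (a) exactly as in (b): from $\phi\in\mc{J}_{k,1}|U_\pi+\mc{J}_{k,1}|U_{\overline\pi}$ it concludes via \lemref{lem:phiup} that $\phi\in\mc{J}_{k,p}^{spez}$, and this space is $\{0\}$ by \lemref{lemjspez}(a) when $k\not\equiv 0\pmod{w(D)}$; the weight hypothesis enters at the level of the spezialschar, not through the individual unit-indexed theta components of the index-one constituents. (Applying \lemref{lem:phiup} does require the identification of $\mc{J}_{k,1}$ with $\mc{J}_{k,1}^{spez}$, i.e.\ the mechanism behind \lemref{lemjspez}(b); that, combined with \lemref{lemjspez}(a), is the intended way the hypothesis $k\not\equiv 0\pmod{w(D)}$ is consumed.) As written, your part (a) does not close.
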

\begin{proof}
(a). Suppose not, then by proposition \ref{prop:p1mod4} $\phi\in \mc{J}_{k,1}|U_{\pi}+\mc{J}_{k,1}|U_{\overline{\pi}}$. By lemma~\ref{lem:phiup} this means $\phi\in \mc{J}_{k,p}^{spez}=\{0\}$ (by Lemma \ref{lemjspez}), a contradiction.

\noindent(b). By proposition \ref{prop:hs=0}, the given condition means $h_s=0$ for all $(s,i\D p)=1$. Thus by proposition \ref{prop:p1mod4} and \ref{lem:phiup}
we have $\phi\in \mc{J}_{k,p}^{spez}$. Thus $h\neq 0$ (see proposition \ref{prop:injspez}).
\end{proof}

\begin{proof}[\textbf{Proof of \propref{prop:finalprop}}]
This is immediate from the above corollary.
\end{proof}

\begin{rmk} \label{classno1}
When the class number of $\mbf{Q}(\sqrt{D})$ is not $1$, we do not see immediately how to adapt the arguments used in \propref{prop:p1mod4}. Moreover, the definition of the operator $U_\rho$ ($\rho \in \mc{O}$) perhaps has to be generalised to the setting of ideals, which again is not clear at the moment.
\end{rmk}

\subsection{Some lemmas about characters of $G$.}\label{subsec:lemmaG}
To descend to the elliptic modular forms we must control $N/m_\chi$ ratio. To this end we prove the following results about the characters of $G$ defined as in section \ref{sec:introhjf}.
\begin{lem}\label{lem:nontrivialeta}
Let $\eta$ be a character of $G$. Then there exists an extension $\tilde{\eta}$ of $\eta$ to $\tilde{G}$ such that restriction of $\tilde{\eta}$ to $\mbf{Z}$ is non trivial and its conductor is divisible by $p$.
\end{lem}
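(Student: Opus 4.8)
\textbf{Plan of proof for \lemref{lem:nontrivialeta}.}
The idea is to exploit the freedom in extending $\eta$ from $G$ to $\tilde G = (\mc{O}/i\D p\mc{O})^\times$ and, via the Chinese Remainder Theorem, isolate the ``$p$-part'' of $\tilde G$. Since $p$ is prime and $(p, i\D)=1$, we have a decomposition $\tilde G \cong (\mc{O}/i\D\mc{O})^\times \times (\mc{O}/p\mc{O})^\times$. Correspondingly the restriction of any character $\tilde\eta$ to $\mbf Z$ factors through $(\mbf Z/|D|\mbf Z)^\times \times (\mbf Z/p\mbf Z)^\times$, and its conductor is divisible by $p$ precisely when the component of $\tilde\eta$ on $(\mc{O}/p\mc{O})^\times$ restricts nontrivially to $(\mbf Z/p\mbf Z)^\times$. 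So the task reduces to: given the constraint that $\tilde\eta$ extends $\eta$ on $G$, can we always arrange the $(\mc{O}/p\mc{O})^\times$-component to be nontrivial on $\mbf Z/p\mbf Z$?

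First I would analyze how $G$ sits inside $\tilde G$ under this CRT decomposition. Recall $G = \{\mu + m\mathcal D \mid N(\mu)\equiv 1 \bmod |D|p\}$ with $m=p$; the norm-one condition mod $|D|p$ splits (again by CRT on $|D|$ and $p$) into a norm-one condition mod $|D|$ and one mod $p$. The key observation is that $G$ has \emph{small} image in the $p$-component: an element of $\tilde G$ whose $p$-part is a rational integer $a \bmod p$ lies in $G$ only if $N(a) = a^2 \equiv 1$, i.e. $a \equiv \pm 1 \bmod p$. Thus the rational integers modulo $p$ meet $G$ only in $\{\pm 1\}$, so $G$ does not ``see'' most of $(\mbf Z/p\mbf Z)^\times$. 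This gives room: I would pick a character $\psi$ of $(\mc{O}/p\mc{O})^\times$ which agrees with the $p$-component of (some fixed extension of) $\eta$ on the subgroup coming from $G$, but is nontrivial on the image of $(\mbf Z/p\mbf Z)^\times$ — possible exactly because that image is not contained in $G \cdot \ker$. Combining $\psi$ with an arbitrary extension of the $(\mc{O}/i\D\mc{O})^\times$-part of $\eta$ produces the desired $\tilde\eta$.

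Concretely, the plan is: (1) set up the CRT isomorphism $\tilde G \cong \tilde G_D \times \tilde G_p$ where $\tilde G_D = (\mc{O}/i\D\mc{O})^\times$, $\tilde G_p = (\mc{O}/p\mc{O})^\times$, and the compatible splitting of $G$ as (a subgroup of) $G_D \times G_p$; (2) fix any extension $\delta = \delta_D \times \delta_p$ of $\eta$ to $\tilde G$; (3) show that the subgroup $H_p := \{a \bmod p : a \in \mbf Z\} \cong (\mbf Z/p\mbf Z)^\times$ of $\tilde G_p$ is not contained in $G_p \cdot \{1\}$ — indeed $H_p \cap G_p = \{\pm1\}$ and $p>2$ — hence there is a character $\psi$ of $\tilde G_p$ trivial on $G_p$ (so that $\delta_p \cdot \psi$ still extends $\eta_p$) but nontrivial on $H_p$; (4) take $\tilde\eta := \delta_D \times (\delta_p \psi)$. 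Its restriction to $\mbf Z$ then has conductor divisible by $p$ by construction, and one checks it is a genuine extension of $\eta$ because $\psi|_{G_p} = 1$. One small point to watch: the restriction to $\mbf Z$ should also be nontrivial as a whole, but nontriviality at $p$ already guarantees this.

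The main obstacle I anticipate is step (3) — verifying cleanly that $G_p \cap H_p = \{\pm 1\}$ and that consequently the required $\psi$ exists — together with making the CRT bookkeeping of the twisted structure of $G$ (the $m\mathcal D$ shift and the norm condition) genuinely consistent across the two components. Everything else is routine character theory on finite abelian groups (any character of a subgroup extends, and one can prescribe its values on a complementary piece), but one must be careful that the chosen $\psi$ is simultaneously trivial on $G_p$ and nontrivial on $H_p$, which is exactly where the arithmetic input $H_p \not\subseteq G_p$ (valid since $p>2$) is used.
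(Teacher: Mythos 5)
Your overall strategy is sound and genuinely different in execution from the paper's. The paper does not construct $\tilde{\eta}$ directly: it fixes one extension $\tilde{\eta}_0$, averages $\tilde{\eta}_p(m)$ over \emph{all} extensions $\tilde{\eta}$ of $\eta$ for a judiciously chosen integer $m\equiv 1\pmod{|D|}$, $m\not\equiv\pm1\pmod p$, and uses orthogonality of $\widehat{\tilde G/G}$ to evaluate the average as $\#(\tilde G/G)\,\tilde{\eta}_0(m)\,\delta_G(m)$, which vanishes because $N(m)=m^2\not\equiv 1\pmod{|D|p}$; hence some extension has nontrivial $p$-component on $\mbf Z$. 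Your CRT splitting $\tilde G\cong \tilde G_D\times\tilde G_p$ and $G\cong G_D\times G_p$ is legitimate (the norm is well defined modulo $|D|p$ on $\tilde G$ and the norm-one condition splits across the two factors), and the arithmetic kernel of both arguments is the same fact, namely $H_p\not\subseteq G_p$. Your version is constructive where the paper's is an averaging argument; both buy the same conclusion.

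Two points need repair. First, $H_p\cap G_p=\{\pm1\}$ yields $H_p\not\subseteq G_p$ only when $|H_p|=p-1>2$, i.e. $p>3$, not $p>2$ as you assert: for $p=3$ one has $H_p=\{\pm1\}\subseteq G_p$, no admissible $\psi$ exists, and indeed the lemma fails for trivial $\eta$. (The paper's proof carries the same hidden restriction, since it needs an $n\not\equiv\pm1\pmod p$; in the application one may take $p$ large, but you should state the hypothesis.) Second, step (4) is not ``by construction'': $\psi|_{H_p}\neq 1$ does not imply $(\delta_p\psi)|_{H_p}\neq 1$, because $\delta_p|_{H_p}$ could equal $\overline{\psi}|_{H_p}$ and cancel it. The fix is short: if $\delta_p|_{H_p}$ is already nontrivial take $\tilde{\eta}=\delta$; otherwise $(\delta_p\psi)|_{H_p}=\psi|_{H_p}\neq 1$ and your choice works. (Alternatively, the restrictions to $H_p$ of characters of $\tilde G_p$ trivial on $G_p$ form the subgroup $\widehat{H_p/(H_p\cap G_p)}$ of order $(p-1)/2\geq 2$, so one can always pick $\psi$ with $\psi|_{H_p}\neq\overline{\delta_p}|_{H_p}$.) With these two patches your argument is complete.
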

\begin{proof}
Since $(i\D,p)=1$, any extension $\tilde{\eta}$ of $\eta$ to $\tilde{G}$ can be decomposed as $\tilde{\eta}=\tilde{\eta}_D\cdot\tilde{\eta}_p$, where $\tilde{\eta}_D$ and $\tilde{\eta}_p$ are characters of $(\mc{O}/i\D \mc{O})^\times$ and $(\mc{O}/p\mc{O})^\times$ respectively.

Let $\psi$ denote the restriction of $\tilde{\eta}$ to $\mbf Z$ so that $\psi$ is a Dirichlet character mod $|D|p$. Since $(|D|,p)=1$ we can decompose $\psi=\psi_{|D|}\cdot\psi_p$, where $\psi_{|D|}$ and $\psi_p$ are Dirichlet characters mod $|D|$ and $p$ respectively. Note that $\psi_{|D|}$ and $\psi_p$ are the  restrictions of $\tilde{\eta}_D$ and $\tilde{\eta}_p$ to $\mbf Z$ respectively. Now it is enough to prove that $\psi_p$ is non trivial for some restriction of $\tilde{\eta}$. We proceed as follows.

Let $n\in \mbf Z$ be such that $(n,p)=1$ and $n\not\equiv\pm 1\pmod p$. Choose $m\in\mbf Z$ such that $m\equiv n\pmod p$ and $m\equiv 1\pmod {|D|}$. Then $\psi(m)=\psi_p(m)=\psi_p(n)$. Now summing over all $\tilde{\eta}$ over $\eta$ we get
\begin{equation*}
\sum_{\tilde{\eta} \text{ over } \eta} \psi_p(m)=\sum_{\tilde{\eta} \text{ over } \eta} \tilde{\eta}_p(m)=\sum_{\tilde{\eta} \text{ over } \eta} \tilde{\eta}(m).
\end{equation*}
Now for the last sum we have $\sum_{\tilde{\eta} \text{ over } \eta} \tilde{\eta}(m)=\sum_{\xi\in \widehat{ \tilde G/G}}\tilde{\eta}_0(m)\xi(m)=\#(\tilde G/G)\tilde{\eta}_0(m)\delta_{G}(m)$, where $\delta_{G}(s)=1$ if $s\in G$, $0$ otherwise and $\tilde{\eta}_0$ is a fixed extension of $\eta$ to $\tilde{G}$.  Thus we have
\begin{equation*}
\sum_{\tilde{\eta} \text{ over } \eta} \psi_p(m)=\#(\tilde G/G)\tilde{\eta}_0(m)\delta_{G}(m).
\end{equation*}
Clearly by our choice of $m$ and $n$ we have $N(m)=m^2\not\equiv 1\pmod{|D|p}$. Thus $\delta_{G}(m)=0$. Hence not all $\psi_p$ could be trivial. This completes the proof.
\end{proof}

\begin{lem}\label{lem:twists}
Let $\eta$ be a character of $G$ and $\xi, \xi' \in \tilde{G}$ be two extensions of $\eta$. Then   $h_{\xi}$ defined as in (\ref{eq:heta}) is zero if and only if $h_{\xi'} =0$.
\end{lem}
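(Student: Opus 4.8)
The statement to prove is \lemref{lem:twists}: if $\eta$ is a character of $G$ and $\xi,\xi'\in\widehat{\tilde G}$ are two extensions of $\eta$, then $h_\xi=0$ if and only if $h_{\xi'}=0$. (Here $h_\xi$ is the twisted Eichler-Zagier image defined in \eqref{eq:heta}.)

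\textbf{The approach.} The plan is to show that any two extensions of $\eta$ differ by a character of the quotient $\tilde G/G$, and that twisting $h_\xi$ by such a character amounts to applying one of the automorphisms $W_\mu$ (for $\mu\in G$) — more precisely, a linear combination of them — composed with a Dirichlet twist, none of which can kill a nonzero form. Concretely, since $\xi,\xi'$ both restrict to $\eta$ on $G$, the ratio $\lambda:=\xi'\overline{\xi}$ is trivial on $G$, hence factors through $\tilde G/G$; conversely $\xi'=\xi\lambda$. So it suffices to show: for any $\lambda\in\widehat{\tilde G/G}$, $h_{\xi\lambda}=0 \iff h_\xi=0$. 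By symmetry (replace $\lambda$ by $\bar\lambda$) it is enough to prove one implication, say $h_\xi=0 \Rightarrow h_{\xi\lambda}=0$.

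\textbf{Key steps.} First I would recall from \eqref{eq:heta} that $h_\xi(\tau)=\sum_{s}\overline{\xi(s)}h_s(|D|p\tau)$, the sum over $s\in\mc O/i\D p\mc O$ (only units contribute, as $h_s$ transforms by \eqref{eq:units} and vanishes otherwise). Then $h_{\xi\lambda}(\tau)=\sum_s\overline{\lambda(s)}\,\overline{\xi(s)}h_s(|D|p\tau)$. The goal is to express this in terms of the $h_\delta$'s for extensions $\delta$ over $\eta$, using the same orthogonality trick already deployed in the proof of \propref{prop:hs=0}: summing $\xi(s_0)h_\xi$ over all $\xi$ lying over a fixed $\eta$ picks out $\sum_{\mu\in G}\overline{\eta(\mu)}h_{\mu s_0}$ (up to the factor $\#(\tilde G/G)$). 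Dually, one can write each individual $h_{\xi\lambda}$ as an average of the $W_\mu$-translates of $h_\xi$: indeed $\overline{\lambda(s)}$, as a function on $\tilde G$ supported away from nonunits, is a Fourier coefficient in the finite group $\tilde G$, so $h_{\xi\lambda}=\frac1{\#G}\sum_{\text{something}}$ — cleaner is to use that $W_\mu$ acts on $h_\xi$ by $h_\xi\circ W_\mu \leftrightarrow$ twisting $\overline{\xi}$ by $\mu$, i.e. $W_\mu$ sends the $\xi$-component to the $\xi\cdot(\text{character }s\mapsto \text{something in }\mu)$ component; since $\lambda$ is trivial on $G$ one does NOT get $h_{\xi\lambda}$ from a single $W_\mu$, so instead I would argue directly: the span of $\{h_{\xi'}: \xi' \text{ over } \eta\}$ inside $S_{k-1}$ is exactly the image under the $\xi$-isotypic-type projection, and $h_\xi=0$ forces every theta component $h_s$ with $(s,i\D p)=1$ appearing with the weight $\overline{\eta(\mu)}$ to cancel in a way independent of which extension we chose. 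The slickest route: show $h_\xi=0 \iff \sum_{\mu\in G}\overline{\eta(\mu)}h_{\mu s}=0$ for every fixed unit $s$, a condition manifestly symmetric in the choice of extension, since by the orthogonality computation of \propref{prop:hs=0} we have $\sum_{\xi' \text{ over }\eta}\xi'(s)h_{\xi'}=\#(\tilde G/G)\sum_{\mu\in G}\overline{\eta(\mu)}h_{\mu s}$ and also each $h_{\xi'}=\frac{1}{\#G}\sum_{s}\xi'(s)\big(\sum_{\mu\in G}\overline{\eta(\mu)}h_{\mu s}\big)$-type inversion. Once $h_\xi=0$ is translated into the extension-free statement ``$\sum_{\mu\in G}\overline{\eta(\mu)}h_{\mu s}=0$ for all units $s$'', the same statement gives $h_{\xi'}=0$ for every other extension $\xi'$, and we are done.

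\textbf{Main obstacle.} The only delicate point is getting the Fourier-inversion on the finite group $\tilde G$ exactly right: one must check that $h_{\xi'}$ (for $\xi'$ over $\eta$) really is determined by, and determines, the collection $\{\,\sum_{\mu\in G}\overline{\eta(\mu)}h_{\mu s}\,\}_s$, with the $s$ ranging over a set of representatives for $G\backslash \tilde G$, and that the change-of-basis matrix between the $h_{\xi'}$'s and these combinations is invertible (it is, being essentially the character table of the abelian group $\tilde G/G$, times nonzero constants). Care is also needed because $h_s$ is only a function of $s$ up to $\mc O^\times$ and up to the condition $\eta(\epsilon)=\epsilon^{-k}$; but this is automatically compatible since all $h_{\xi'}$ share it. I expect the write-up to be short once the orthogonality bookkeeping from \propref{prop:hs=0} is reused verbatim.
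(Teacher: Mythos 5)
You have correctly identified the right reduction: writing $h_\xi = \sum_{s\in\tilde G/G}\overline{\xi(s)}\, f_s$ with $f_s := \sum_{\mu\in G}\overline{\eta(\mu)}h_{\mu s}(|D|p\tau)$, the lemma follows once one knows that $h_\xi=0$ forces every $f_s=0$, since the $f_s$ depend only on $\eta$ and not on the chosen extension. This is exactly the decomposition the paper uses. However, your justification of the crucial implication $h_\xi=0\Rightarrow f_s=0$ for all $s$ has a genuine gap. Both tools you invoke --- the orthogonality sum $\sum_{\xi'\text{ over }\eta}\xi'(s)h_{\xi'}=\#(\tilde G/G)\, f_s$ from Proposition~\ref{prop:hs=0}, and the invertibility of the character table of $\tilde G/G$ as a change of basis between $(h_{\xi'})_{\xi'}$ and $(f_s)_s$ --- relate the \emph{full collection} of twists $\{h_{\xi'}\}$ to the full collection $\{f_s\}$. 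From the vanishing of a \emph{single} $h_\xi$ they yield only one linear relation among the $f_s$, not their individual vanishing; to extract $f_s=0$ from the orthogonality sum you would already need all $h_{\xi'}=0$, which is precisely what is to be proved. (Your aside about $W_\mu$ correctly observes that those operators cannot move between extensions of $\eta$, so that route is also closed.)

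The missing ingredient, which is the actual content of the paper's proof, is that the $f_s$ for distinct cosets $s\in\tilde G/G$ have pairwise disjoint Fourier supports: $f_s$ is supported on exponents $n\equiv -N(s)\pmod{|D|p}$, and no two distinct representatives $s_1,s_2$ with $(s_1s_2,i\D p)=1$ lying in different cosets of $G$ have congruent norms modulo $|D|p$ (this is the congruence fact recorded at the end of the proof of Proposition~\ref{pnonsplit}, and reflects the definition of $G$ by the condition $N(\mu)\equiv 1$). Hence in $h_\xi=\sum_s\overline{\xi(s)}f_s$ there can be no cancellation between different $s$, and $h_\xi=0$ forces each $f_s=0$ term by term; since the $f_s$ are extension-free, $h_{\xi'}=0$ follows. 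Without this support-disjointness (or some substitute for it) your argument does not close.
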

\begin{proof}
Let us note that we can write $h_\xi(\tau) := \sum_{s \bmod i\D p, \, (s,i\D p)=1} \overline{\xi(s)} h_{s}(|D|p \tau)$ as
\[ h_\xi(\tau) = \sum_{s \in \tilde{G} / G} \overline{\xi(s)} \left( \sum_{\mu \in G}  \overline{\eta(\mu)} h_{\mu s}(|D|p \tau) \right) \]
and similarly for $h_{\xi'}$. Now $h_\xi=0$ implies that each of the terms (let us call them $f_s$) in the braces above are zero. This can be checked from the shape of the Fourier expansion of the $f_s$'s. Namely, the Fourier expansion of $f_s$ is supported on all $n$ such that $n \equiv - N(s) \bmod |D|p$ and no two norms of two distinct elements $s_1,s_2$ from $\tilde{G}/G$ with $(s_1s_2,i\D p)=1$ can be congruent modulo $|D|p$ (cf. end of proof of \propref{pnonsplit}). Since $f_s$ does not depend on $\xi$, this proves the lemma.
\end{proof}

\begin{proof}[\textbf{Proof of \propref{prop:hetanonzero}.}]
This is an immediate consequence of lemma \ref{lem:nontrivialeta} and lemma \ref{lem:twists}.
\end{proof}

\section{The case of elliptic Modular Forms} \label{int}
\subsection{Proof of Theorem \ref{th:02}(a)}
\begin{thm}
Let $\chi$ be a Dirichlet character of conductor $m_\chi$ and $N$ be a positive integer such that $N/m_\chi$ is square-free. Let $f\in S_k(N,\chi)$ be such that $a(f,n)=0$ for all but finitely many square-free integers $n$. Then $f=0$.
\end{thm}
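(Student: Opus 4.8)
The plan is to combine the Atkin--Lehner--Li structure theory of oldforms and newforms with a Hecke operator projection (the idea being borrowed from \cite{BalogOno}), the latter serving to transfer the hypothesis from $f$ to the individual primitive constituents of $f$ while keeping control of the squarefreeness of the indices involved.

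First I would expand $f$ in the standard basis. By Atkin--Lehner--Li,
\[
S_k(N,\chi)=\bigoplus_{M}\ \bigoplus_{g\in\mathcal N_k(M,\chi)}\ \bigoplus_{L\mid N/M}\mathbf C\, g(L\tau),
\]
where $M$ runs over the multiples of $m_\chi$ dividing $N$ and $g$ over the normalized newforms of level $M_g:=M$ and character $\chi$ (so $a(g,1)=1$ and $n\mapsto a(g,n)$ is multiplicative). Thus $f=\sum_{g}\sum_{L\mid N/M_g}c_{g,L}\,g(L\tau)$, and since $N/M_g\mid N/m_\chi$ is squarefree, every $L$ occurring here is squarefree. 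Now take a squarefree $n$ and write $n=tu$ with $t=\prod_{p\mid n,\ p\mid N}p$ and $u=\prod_{p\mid n,\ p\nmid N}p$, so $t\mid\operatorname{rad}(N)$, $(u,N)=1$, $(t,u)=1$. For any $L\mid N/M_g$ one has $L\mid n\iff L\mid t$, and then $n/L=(t/L)u$ with $(t/L,u)=1$, so multiplicativity gives
\[
a(f,tu)=\sum_{g}\Big(\sum_{L\mid\gcd(t,\,N/M_g)}c_{g,L}\,a(g,t/L)\Big)a(g,u)=:\sum_{g}\gamma_g(t)\,a(g,u).
\]
Consequently, for each fixed squarefree $t\mid\operatorname{rad}(N)$, the hypothesis on $f$ says $\sum_{g}\gamma_g(t)\,a(g,u)=0$ for all but finitely many squarefree $u$ with $(u,N)=1$.

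The heart of the matter is the following linear independence statement, which is where the reduction ``to newforms'' takes place: if $g^{(1)},\dots,g^{(r)}$ are distinct normalized newforms (of character $\chi$ and levels dividing $N$) and $\gamma_1,\dots,\gamma_r\in\mathbf C$ satisfy $\sum_i\gamma_i\,a(g^{(i)},u)=0$ for all but finitely many squarefree $u$ coprime to $N$, then $\gamma_1=\dots=\gamma_r=0$. I would prove this by contradiction: discard the indices with $\gamma_i=0$, so $r\ge1$ and all $\gamma_i\ne0$, and derive $\gamma_1=0$. By strong multiplicity one, distinct newforms have distinct $p$-th Hecke eigenvalues for infinitely many $p\nmid N$; choose \emph{distinct} primes $p_2,\dots,p_r\nmid N$ with $a(g^{(1)},p_i)\ne a(g^{(i)},p_i)$ and form $\pi=\prod_{i=2}^{r}\big(T_{p_i}-a(g^{(i)},p_i)\big)\big/\big(a(g^{(1)},p_i)-a(g^{(i)},p_i)\big)$, a polynomial of degree one in each commuting Hecke operator $T_{p_i}$ ($p_i\nmid N$). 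Since $g^{(i)}$ is a $T_{p_i}$-eigenform with eigenvalue $a(g^{(i)},p_i)$, we get $\pi\big(\sum_i\gamma_i g^{(i)}\big)=\gamma_1 g^{(1)}$. Expanding $\pi$ as a combination of products $\prod_{i\in S}T_{p_i}$ over $S\subseteq\{2,\dots,r\}$, and using $a(T_p h,n)=a(h,pn)+\chi(p)p^{k-1}a(h,n/p)$ for $p\nmid N$ together with the fact that the $p_i$ are distinct primes coprime to $N$, an easy induction on $|S|$ shows that for squarefree $u$ coprime to $N\prod_i p_i$,
\[
a\Big(\big(\prod_{i\in S}T_{p_i}\big)\big(\sum_j\gamma_j g^{(j)}\big),\,u\Big)=a\Big(\sum_j\gamma_j g^{(j)},\ \big(\prod_{i\in S}p_i\big)\,u\Big),
\]
and the argument $(\prod_{i\in S}p_i)u$ on the right is again squarefree, coprime to $N$, and large once $u$ is. Hence the hypothesis forces $\gamma_1\,a(g^{(1)},u)=0$ for all but finitely many squarefree $u$ coprime to $N\prod_i p_i$; but $a(g^{(1)},\cdot)$ is nonzero on a positive proportion of such $u$ (e.g. $a(g^{(1)},p)\ne0$ for infinitely many primes $p$, by Serre's theorem on the non-lacunarity of Hecke eigenforms, or via the Rankin--Selberg lower bound together with Deligne's estimate \eqref{eq:deligne}), so $\gamma_1=0$, a contradiction.

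Granting this, $\gamma_g(t)=0$ for every squarefree $t\mid\operatorname{rad}(N)$ and every $g$. Fixing $g$: as $N/M_g$ is squarefree, each $L\mid N/M_g$ is a squarefree divisor of $\operatorname{rad}(N)$, and for $t\mid N/M_g$ we have $\gcd(t,N/M_g)=t$, so $\sum_{L\mid t}c_{g,L}\,a(g,t/L)=0$ for all $t\mid N/M_g$. Since $a(g,1)=1$, M\"obius inversion over the divisors of $N/M_g$ (i.e.\ induction on the number of prime factors of $t$) gives $c_{g,L}=0$ for all $L\mid N/M_g$, whence $f=0$. I expect the main obstacle to be exactly the linear independence step: one has to propagate the vanishing of the squarefree-indexed coefficients through the Hecke projection, and the subtle point is that squarefreeness \emph{and} coprimality must both survive, which is what forces the use of several \emph{distinct} auxiliary primes (the naive eigenvalue projector, which uses a single $T_p$ with multiplicity, would destroy squarefreeness). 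It is also worth noting that the hypothesis ``$N/m_\chi$ squarefree'' is used essentially in the last paragraph: it guarantees that every shift $g(L\tau)$ appearing in $f$ has squarefree $L$, so that all the $c_{g,L}$ are actually ``seen'' by the $\gamma_g(t)$ with $t$ squarefree; without it the statement fails, as the example $g(m^2\tau)$ already shows.
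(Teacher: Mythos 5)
Your proof is correct and follows essentially the same route as the paper's: both rest on the Balog--Ono-style projection $\prod_i\bigl(T_{p_i}-\lambda_i\bigr)$ built from \emph{distinct} auxiliary primes $p_i\nmid N$ to isolate a single newform while preserving squarefreeness of the indices, and both invoke the squarefreeness of $N/m_\chi$ to guarantee that the oldform shifts are squarefree. Your reorganization --- an explicit linear-independence lemma plus M\"obius induction over the divisors $L\mid N/M_g$, in place of the paper's application of $U_{\delta_1}$ with $\delta_1$ the minimal divisor --- is only a cosmetic difference.
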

\begin{proof}
$f\in S_k(N,\chi)$ is a newform then the result follows from multiplicity-one. Let $f\in S_k(N,\chi)$ be non-zero. Consider a basis $\{f_{1}, f_{2},......f_{s}\}$ of newforms of weight $k$ and level dividing $N$. Let their Fourier expansions be given by $f_{i}(\tau)=\sum_{n=1}^{\infty} b_{i}(n)q^{n}$. Then for all primes $p$, one has $T_{p}f_{i}=b_{i}(p)f_{i}$. By "multiplicity-one", if $i\neq j$, we can find infinitely many primes $p>N$ such that $b_{i}(p)\neq b_{j}(p)$. Now by the theory of newforms, there exist $\alpha_{i,\delta} \in \mf C$ such that $f(\tau)$ can be written uniquely in the form
\begin{equation}\label{eq:decomp}
f(\tau)=\sum_{i=1}^{s} \sum_{\delta |N} \alpha_{i,\delta}f_{i}(\delta \tau).
\end{equation}
Since $f\neq 0$, we may, after renumbering the indices, assume $\alpha_{1,\delta}\neq 0$ for some $\delta|N$. Let $p_{1}\nmid N$ be any prime for which $b_{1}(p_{1})\neq b_{2}(p_{1})$. Then consider the form $g_{1}(\tau)=\sum_{n=1}^{\infty}a_{1}(n)q^{n} :=T_{p_1}f(\tau)-b_{2}(p_{1})f(\tau)$ so that
\begin{align*}
g_{1}(\tau)=\sum_{i=1}^{s}(b_{i}(p_{1})-b_{2}(p_{1}))\sum_{\delta|N} \alpha_{i,\delta}f_{i}(\delta \tau).
\end{align*}
The cusp forms $f_{2}(\delta \tau)$ for any $\delta \mid N$, do not appear in the decomposition of $g_{1}(\tau)$ but $f_{1}(\delta \tau)$ does for some $\delta |N$. Also it is easy to see that $a_{1}(n)=a(f,p_1n)+\chi(p_1)p_1^{k-1}a(f,n/p_1)-b_{2}(p_{1})a(f,n)$. Proceeding inductively in this way, we can remove all the non-zero newform components $f_{i}(\delta \tau)$ for all $i=2,...,s$, to obtain a cusp form $F(\tau)$ in $S_{k}(N,\chi)$. After dividing by a suitable non-zero complex number we get
\begin{equation*}
F(\tau)=\sum_{n=1}^{\infty}A(n)q^{n}:=\sum_{\delta|N}\alpha_{1,\delta}f_{1}(\delta \tau).
\end{equation*}
Now by repeating the above steps we get finitely many algebraic numbers $\beta_{j}$ and positive rational numbers $\gamma_{j}$ such that for every $n$
\begin{equation} \label{f}
A(n)=\sum_{\delta|N} \alpha_{1,\delta} b_{1}(n/\delta)=\sum_{j} \beta_{j}a(f,\gamma_{j}n).
\end{equation}
Let $\delta_{1}$ be the smallest divisor of $N$ such that $\alpha_{1,\delta_{1}}\neq 0$ in (\ref{eq:decomp}) and let $F^{*}(\tau)=U_{\delta_{1}}F(\tau)$. Then $F^{*}(\tau)\in S_{k}(N,\chi)$ with $F^{*}(\tau)=\sum_{n=1}^{\infty} A(\delta_{1}n)q^{n}$.

Since $f_{1}\neq 0$ there are infinitely many primes $p$ such that $b_{1}(p)\neq 0$. Let $S=\{p:p\hspace{1 mm}$ prime, $p|N\}\cup \{p: p\hspace{1 mm}$ prime, $b_{1}(p)=0\}\cup \{\text{the primes } p_i \text{ chosen as above}\}$.

If $p\notin S$, then $A(\delta_{1}p)=\alpha_{1,\delta_{1}}b_{1}(p)\neq 0$ and there are infinitely many such primes. For each of these $p$ we get a $j=j_0$ such that $a(f,\gamma_{j_0}\delta_{1}p)\neq 0$. Let us now finish the proof of the theorem.

Let $m_{1}|N$ be such that $f_{1}(\tau)$ is a newform in $S_{k}(m_{1},\chi_{1})$, where $\chi_{1}$ (mod $m_{1}$) is the character induced by $\chi$ (mod $N$). Then $m_\chi|m_{1}$ and for each $\delta$ in the sum (\ref{eq:decomp}), $\delta m_{1}|N$. Since $N/m_\chi$ is square-free we must have that each of the $\delta$ in (\ref{eq:decomp}) is square-free (since $\delta|(N/m_\chi)$). In particular $\delta_{1}$ is square-free. Next, in the process of obtaining $F$ as above, clearly we can choose primes $p_{1},p_{2}....$ pairwise distinct (by multiplicity-one).  By construction, the prime divisors of any $\gamma_j$ appearing in \eqref{f} are from the set $\{p_1,p_2,\ldots,p_s  \}$. Moreover, since the highest power of a $p_i$ ($i=1,2,\ldots,s$) is either $0,\pm 1$, all the $\gamma_{j}$'s are square-free. In particular $\gamma_{j_0}$ is square-free and $\delta_1,p,\gamma_{j_0}$ are pairwise co-prime. The result thus follows with $n=\gamma_{j_0}\delta_{1}p$ with any $p \not \in S$.
\end{proof}

\subsection{Second moment of square--free Fourier coefficients} \label{2mom}
\thmref{th:02} would be proved by studying the second moment of the Fourier coefficients of an integral weight cusp form. We first recall the following well known result due to Rankin \cite{rankin} and Selberg \cite{selberg}.
\begin{thm}\label{th:3}
Let $f\in S_{k}(N,\chi)$ be non-zero. Then there exists a constant $A_{f}>0$ such that
\begin{equation}
\underset{n\leq X}{\sum}|a'(f,n)|^{2}=A_{f}X+O(X^{\frac{3}{5}}).
\end{equation}
Moreover, $A_{f}=\frac{3}{\pi}\frac{(4\pi)^{k}}{\Gamma(k)}\big[\mrm{SL}_2(\mathbf{Z})\colon \Gamma_0(N)\big]^{-1}\left\langle f,f\right\rangle_N$. The implied constant depends only on $f$.
\end{thm}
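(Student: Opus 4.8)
The plan is to apply the Rankin--Selberg method and then pass from analytic information about a Dirichlet series to the stated asymptotic by a Tauberian/contour argument. Introduce
\[
D(s):=\sum_{n\ge 1}|a'(f,n)|^{2}\,n^{-s},
\]
absolutely convergent and holomorphic for $\Re(s)>1$ by Deligne's bound \eqref{eq:deligne}, together with the real-analytic Eisenstein series $E(\tau,s):=\sum_{\gamma\in\Gamma_{\infty}\backslash\Gamma_{0}(N)}\im(\gamma\tau)^{s}$ attached to the cusp $\infty$. Consider
\[
I(s):=\int_{\Gamma_{0}(N)\backslash\mc{H}}|f(\tau)|^{2}\,E(\tau,s)\,y^{k-2}\,dx\,dy,
\]
which converges for $\Re(s)>1$ since $|f(\tau)|^{2}y^{k}$ is bounded on $\Gamma_{0}(N)\backslash\mc{H}$ and decays at the cusps, $f$ being a cusp form.

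First I would unfold the integral: writing $\Gamma_{\infty}\backslash\mc{H}$ as the strip $\{0\le x<1,\,y>0\}$, inserting the Fourier expansion of $f$, and integrating in $x$ to annihilate the off-diagonal terms yields
\[
I(s)=\frac{\Gamma(s+k-1)}{(4\pi)^{s+k-1}}\,D(s).
\]
Then I would invoke the meromorphic continuation of $E(\tau,s)$ to $\mathbf{C}$: on $\Re(s)\ge 1$ it is holomorphic except for a simple pole at $s=1$ with residue $\big(\mrm{vol}(\Gamma_{0}(N)\backslash\mc{H})\big)^{-1}$, and $\mrm{vol}(\Gamma_{0}(N)\backslash\mc{H})=\tfrac{\pi}{3}\big[\mrm{SL}_2(\mathbf{Z})\colon\Gamma_{0}(N)\big]$ with respect to $y^{-2}\,dx\,dy$. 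Consequently $I(s)$, hence $D(s)=\tfrac{(4\pi)^{s+k-1}}{\Gamma(s+k-1)}\,I(s)$, continues to $\Re(s)\ge 1$ with a single simple pole at $s=1$, and
\[
\operatorname*{Res}_{s=1}D(s)=\frac{(4\pi)^{k}}{\Gamma(k)}\cdot\frac{3}{\pi}\big[\mrm{SL}_2(\mathbf{Z})\colon\Gamma_{0}(N)\big]^{-1}\langle f,f\rangle_{N}=:A_{f},
\]
which is positive because $\langle f,f\rangle_{N}>0$.

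Finally I would recover the partial sums. Since the coefficients $|a'(f,n)|^{2}$ are nonnegative and $D(s)$ is holomorphic on the line $\Re(s)=1$ apart from the simple pole at $s=1$, the Wiener--Ikehara Tauberian theorem immediately gives $\sum_{n\le X}|a'(f,n)|^{2}=A_{f}X+o(X)$. To strengthen $o(X)$ to $O(X^{3/5})$ I would use a truncated Perron formula, move the contour to the left of $\Re(s)=1$ to pick up the residue $A_{f}X$, and estimate the shifted integral using the functional equation of $E(\tau,s)$ (equivalently, of the degree-four Rankin--Selberg convolution attached to $f$) together with the convexity bound for $D(s)$ in vertical strips; optimizing the free contour parameter produces the classical exponent $3/5$, and the implied constant depends only on $f$ through the weight, level, and $\langle f,f\rangle_{N}$ entering these bounds. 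I expect this last step --- turning the pole into a power-saving error term via growth control of $D(s)$ on vertical lines --- to be the only substantive technical point; the unfolding identity and the analytic properties of the Eisenstein series are entirely standard.
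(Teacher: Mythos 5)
The paper does not prove this statement at all: it is quoted as ``the following well known result due to Rankin and Selberg'' and used as a black box, so there is no internal proof to compare against. Your reconstruction of the classical argument is correct in its main structure. The unfolding identity is right (with the paper's normalization $a(f,n)=a'(f,n)n^{(k-1)/2}$ one indeed gets $I(s)=\Gamma(s+k-1)(4\pi)^{-(s+k-1)}D(s)$), and the residue computation using $\operatorname{Res}_{s=1}E(\tau,s)=\mathrm{vol}(\Gamma_0(N)\backslash\mathcal{H})^{-1}$ with $\mathrm{vol}(\Gamma_0(N)\backslash\mathcal{H})=\frac{\pi}{3}[\mathrm{SL}_2(\mathbf{Z}):\Gamma_0(N)]$ reproduces exactly the constant $A_f$ stated in the theorem. (For $\Gamma_0(N)$ the functional equation involves the full vector of Eisenstein series at all cusps via the scattering matrix, but only the cusp at infinity enters the unfolding and the residue, so this is a harmless bookkeeping point.)

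The one step that would fail as written is your derivation of the exponent $3/5$. A truncated Perron formula combined with the convexity bound does not give $O(X^{3/5})$: the completed Rankin--Selberg series is of degree four, so convexity on $\Re(s)=\tfrac12$ gives only $D(\tfrac12+it)\ll(1+|t|)^{1+\varepsilon}$, and optimizing $X^{1/2}T^{1+\varepsilon}+X^{1+\varepsilon}T^{-1}$ over $T$ yields $O(X^{3/4+\varepsilon})$, not $O(X^{3/5})$. The classical exponent comes instead from Landau's theorem (in the form of Chandrasekharan--Narasimhan) for Dirichlet series with \emph{nonnegative} coefficients satisfying a functional equation of Riemann type: here the gamma factor is $\Gamma(s)\Gamma(s+k-1)$, so $A=2$ in Landau's normalization and the error term is $O\big(X^{(2A-1)/(2A+1)}\big)=O(X^{3/5})$ (compare $A=1$ giving the $X^{1/3}$ of the divisor problem). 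This is precisely how Rankin obtained the bound; the nonnegativity of $|a'(f,n)|^2$ is essential in that argument, not only in the Wiener--Ikehara step. Replacing ``convexity plus contour shift'' by an appeal to Landau's theorem (or reproducing its Riesz-mean proof) closes the gap; everything else in your sketch is sound.
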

The following is a first step towards the proof of \thmref{th:02}, adapted from Saha\cite{saha}.
\begin{prop}\label{prop:M}
Let $N$ be a positive integer and $\chi$ be Dirichlet character $\mod N$ whose conductor is $m_\chi$. Let $f\in S_k(N,\chi)$ be non-zero and $a(f,n)=0$ whenever $(n,N)>1$. Let $M$ be a fixed square-free integer such that $M$ contains all the primes dividing $N$. Then there exists $B_{f,M}>0$ such that
\begin{equation}
\underset{(n,M)=1}{\sum_{n\le X}}|a'(f,n)|^2=B_{f,M}X+O(X^{\frac{3}{5}}).
\end{equation}
\end{prop}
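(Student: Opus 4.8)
The plan is to realize the left-hand sum as the Rankin--Selberg second moment of a single auxiliary cusp form obtained from $f$ by deleting every Fourier coefficient indexed by an integer divisible by a prime of $M$ that does not divide $N$, and then to invoke \thmref{th:3}. Put $M_0:=\prod\{p:\ p\mid M,\ p\nmid N\}$; then $M_0$ is square-free, $(M_0,N)=1$, and $(n,M)=1$ if and only if both $(n,N)=1$ and $(n,M_0)=1$.

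First I would construct the sieved form. Using $\mathbf{1}_{(n,M_0)=1}=\sum_{d\mid(n,M_0)}\mu(d)$, set
\[
f^{\flat}\ :=\ \sum_{d\mid M_0}\mu(d)\,V_{d}U_{d}f ,
\]
so that, directly from the definitions of $U_d$ and $V_d$, one has $a'(f^{\flat},n)=a'(f,n)$ when $(n,M_0)=1$ and $a'(f^{\flat},n)=0$ otherwise. Since $(d,N)=1$ for every $d\mid M_0$, the operators $U_d,V_d$ are defined on the relevant spaces and preserve the nebentypus, so $V_dU_df\in S_k(Nd^{2},\chi)$ and hence $f^{\flat}\in S_k(NM_0^{2},\chi)$. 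Now the hypothesis enters: because $a(f,n)=0$ whenever $(n,N)>1$, the condition $(n,M_0)=1$ in the support of $f^{\flat}$ may be strengthened to $(n,M)=1$ without changing the sum of squares of its coefficients, i.e.
\[
\sum_{\substack{n\le X\\ (n,M)=1}}|a'(f,n)|^{2}\ =\ \sum_{n\le X}|a'(f^{\flat},n)|^{2}.
\]
Provided $f^{\flat}\ne 0$, \thmref{th:3} applied to $f^{\flat}$ evaluates the right-hand side as $A_{f^{\flat}}X+O(X^{3/5})$ with $A_{f^{\flat}}>0$, so the proposition holds with $B_{f,M}:=A_{f^{\flat}}$.

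The hard part, and the only step that needs real care, is the nonvanishing $f^{\flat}\ne 0$. Since $f\ne 0$, this reduces to the general fact that for a finite set $P$ of primes coprime to $N$, the only $h\in S_k(N,\chi)$ with $\prod_{p\in P}(1-V_pU_p)h=0$ is $h=0$; I would prove it by induction on $|P|$. For $|P|=1$, say $P=\{p_0\}$, the hypothesis reads $h=V_{p_0}(U_{p_0}h)$, so $h$ lies both in the image of $V_{p_0}\colon S_k(Np_0,\chi)\to S_k(Np_0^{2},\chi)$ and in the natural image of $S_k(N,\chi)$ in $S_k(Np_0^{2},\chi)$; since $p_0\nmid N$ these intersect only in $\{0\}$ by the linear independence of the oldform images $f_0(d\tau)$ (Atkin--Lehner--Li), whence $h=0$. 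For the inductive step, peel off $p_0\in P$: the form $g:=\prod_{p\in P\setminus\{p_0\}}(1-V_pU_p)h$ lies in $S_k(N',\chi)$ for some $N'$ with $p_0\nmid N'$ and satisfies $(1-V_{p_0}U_{p_0})g=0$, so the case $|P|=1$ gives $g=0$, and then the inductive hypothesis applied to $h$ with $P\setminus\{p_0\}$ gives $h=0$. An essentially equivalent route, closer to \cite{saha}, applies the M\"obius sieve directly: $\sum_{n\le X,\,(n,M)=1}|a'(f,n)|^{2}=\sum_{d\mid M}\mu(d)\sum_{m\le X/d}|a'(f,dm)|^{2}$, where the terms with $(d,N)>1$ vanish by hypothesis, and for $d\mid M_0$ one substitutes $a'(U_df,m)=d^{(k-1)/2}a'(f,dm)$ and applies \thmref{th:3} to each $U_df$, obtaining $B_{f,M}X+O(X^{3/5})$ with $B_{f,M}=\sum_{d\mid M_0}\mu(d)d^{-k}A_{U_df}$; its positivity is the same nonvanishing fact, since this $B_{f,M}$ equals $A_{f^{\flat}}$.
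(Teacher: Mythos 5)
Your proposal is correct and follows essentially the same route as the paper: sieve out the primes of $M$ not dividing $N$ to form an auxiliary cusp form whose full second moment equals the restricted sum (the condition at primes dividing $N$ being free by hypothesis), apply the Rankin--Selberg asymptotic to it, and establish its nonvanishing by removing one prime at a time. The only cosmetic difference is that you justify the one-prime nonvanishing step via Atkin--Lehner--Li linear independence of oldform images, whereas the paper argues that a nonzero form supported on multiples of $p$ would force $p\mid N/m_\chi$, contradicting $p\nmid N$; these are the same underlying fact.
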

\begin{proof}
Define $g(\tau)=\sum_{(n,M)=1} a(f,n)q^n$. Let $p_1$,$p_2$,...$p_t$ be the primes in $M$ that do not divide $N$ and $M_0$ be such that $M=M_0p_1p_2....p_t$. Then $g\in S_k(NM^2/M_0,\chi)$ (see \cite{miyake}).
If $g\neq 0$ then 
\begin{equation*}
\sum_{n\leq X, (n,M)=1}|a'(f,n)|^2=\sum_{n\leq X}|a'(g,n)|^2=A_gX+O(X^{\frac{3}{5}}),
\end{equation*}
where $A_g$ is as in Theorem \ref{th:3}. Put $B_{f,M}:=A_g$. Since $g\neq 0$, we have $B_{f,M}>0$.

We now prove that $g\ne 0$. Indeed, let $g_0=f$.
Let $g_1(\tau)=\sum_{(n,p_1)=1}a(g_0,n)q^n$. Then $g_1\in S_k(Np_1^2,\chi)$ (see \cite[page 157]{miyake}). If $g_1=0$, then $a(f,n)=0$ for every $(n,p_1)=1$, which in turn implies $(p_1, N/m_{\chi})>1$, which is impossible.

For each $1\le j\le t$ construct $g_j$ as $g_j(\tau)=\sum_{(n,p_j)=1}a(g_{j-1},n)q^n.$
Then $g_j\in S_k(Np_1^2....p_j^2,\chi)$. If for any $1\le j\le t$, $g_j=0$ but $g_{j-1}\neq 0$, then $a(g_{j-1},n)=0$ for $(n,p_j)=1$. This would mean $(p_j, Np_1^2....p_{j-1}^2/m_\chi)>1$, which is impossible. Hence $g_j\neq 0$ for $1\le j \le t$.

We have from the definition of $g$ that,
\begin{equation*}
g(\tau)=\underset{(n,M_0)=1}{\sum}a(g_t,n)q^n.
\end{equation*}
If $g=0$, then $a(g_t,n)=0$ whenever $(n,M_0)=1$, that is $a(f,n)=0$ whenever $(n,M_0)=1$, consequently $f=0$ which is not possible. Thus 
$g\neq 0$.
\end{proof}
\begin{cor}\label{cor:nr}
Let $f\in S_{k}(N,\chi)$ be non-zero. Then for any $r$ with $(r,N)=1$, there exists a constant $A_{f,r}>0$ depending only on $f$ and $r$  such that
\begin{equation}\label{eq:5}
\underset{n\leq X}{\sum}|a'(f,nr)|^{2}=A_{f,r}X+O(X^{\frac{3}{5}}).
\end{equation}
Moreover, $A_{f,r}=\frac{3}{\pi}\frac{(4\pi)^{k}}{\Gamma(k)}\big[\mrm{SL}_2(\mathbf{Z})\colon\Gamma_0(Nr)\big]^{-1}r^{1-k}\left\langle U_rf, U_rf\right\rangle_{Nr}$.
\end{cor}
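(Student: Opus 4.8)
The plan is to deduce \eqref{eq:5} from \thmref{th:3} applied to the single cusp form $U_rf$. The starting observation is that $U_r$ extracts the Fourier coefficients of $f$ supported on multiples of $r$: unwinding the definition \eqref{heckeUn} and summing the resulting geometric sum over $b$ gives
\[ U_rf(\tau)=\sum_{n\ge 1}a'(f,rn)\,(rn)^{\frac{k-1}{2}}e(n\tau),\qquad\text{equivalently}\qquad a'(U_rf,n)=r^{\frac{k-1}{2}}\,a'(f,rn). \]
Since $(r,N)=1$ one has $U_rf\in S_k(Nr,\chi)$. Assuming for the moment that $U_rf\neq 0$, \thmref{th:3} applied to $U_rf$ yields $\sum_{n\le X}|a'(U_rf,n)|^2=A_{U_rf}X+O(X^{3/5})$ with $A_{U_rf}=\tfrac{3}{\pi}\tfrac{(4\pi)^k}{\Gamma(k)}\big[\mrm{SL}_2(\mathbf Z):\Gamma_0(Nr)\big]^{-1}\langle U_rf,U_rf\rangle_{Nr}>0$. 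Feeding the coefficient identity into the left-hand side and dividing through by $r^{k-1}$ produces \eqref{eq:5} with $A_{f,r}=r^{1-k}A_{U_rf}$, which is exactly the asserted value; positivity follows from $U_rf\neq 0$, and the implied constant depends only on $f$ and $r$.

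The substantive point is the nonvanishing $U_rf\neq 0$, i.e.\ that $S_k(N,\chi)\cap\ker U_r=\{0\}$ when $(r,N)=1$. I would prove this Hecke-theoretically. As $U_r$ commutes with $T_q$ for $(q,Nr)=1$ and, by strong multiplicity one, such $T_q$ already distinguish the newforms of level dividing $N$, the map $U_r$ carries each isotypic component of $S_k(N,\chi)$ into the corresponding one of $S_k(Nr,\chi)$; it therefore suffices to take $f$ in a single component, $f=\sum_{\delta\mid N/M_g}\alpha_\delta\,g(\delta\tau)$, with $g$ a newform of level $M_g\mid N$, all $\delta$ (and $M_g$) coprime to $r$, and $\alpha_\delta\neq 0$ for some $\delta$. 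Since $(\delta,r)=1$, dilation by $\delta$ commutes with $U_r$, so $U_rf=\sum_\delta\alpha_\delta\,(U_rg)(\delta\tau)$. Now $U_rg\neq 0$: for each prime $p\mid r$ (hence $p\nmid M_g$) the space $\langle g(\tau),g(p\tau)\rangle$ is $U_p$-stable and $U_p$ acts on it through the companion matrix of $X^2-a(g,p)X+\chi(p)p^{k-1}$, which is invertible as its determinant $\chi(p)p^{k-1}$ is nonzero; since the various $U_p$ ($p\mid r$) act on the $g$-oldspace spanned by the $g(d\tau)$ with $d$ a squarefree divisor of $r$ as a tensor product of such companion matrices, $U_r$ is invertible there and $U_rg\neq 0$. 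Finally, the forms $(U_rg)(\delta\tau)$ for the distinct divisors $\delta\mid N/M_g$ coprime to $r$ are linearly independent, because their orders of vanishing at $\infty$ — namely $\delta$ times that of $U_rg$ — are pairwise distinct; hence $U_rf\neq 0$.

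The only step that is not routine is this nonvanishing assertion. What needs care is that $U_p$ is \emph{not} injective on the full oldspace of $g$ in a level divisible by a high power of $p$, so one genuinely must isolate a single newform and exploit that the dilations $\delta$ occurring in a form of level $N$ are all coprime to $r$. Once $U_rf\neq 0$ is in hand, the corollary follows at once from \thmref{th:3} as in the first paragraph.
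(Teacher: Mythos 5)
Your proof is correct and takes the same route as the paper: set $g=U_rf\in S_k(Nr,\chi)$, apply \thmref{th:3} to $g$, and translate normalizations via $a'(U_rf,n)=r^{(k-1)/2}a'(f,rn)$, which gives exactly the stated constant $A_{f,r}=r^{1-k}A_{U_rf}$. The one place you go beyond the paper is the nonvanishing $U_rf\neq 0$ (which is needed for the asserted positivity $A_{f,r}>0$): the paper applies \thmref{th:3} to $U_rf$ without comment, whereas you prove that $U_r$ is injective on $S_k(N,\chi)$ for $(r,N)=1$ by reducing, via strong multiplicity one, to a single newform component and observing that each $U_p$ with $p\nmid M_g$ acts invertibly on the span of $g(\tau)$ and $g(p\tau)$ (determinant $\chi(p)p^{k-1}\neq 0$), the dilations $g(\delta\tau)$ with $\delta\mid N$ being automatically coprime to $r$. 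That argument is sound and supplies a detail the paper leaves implicit.
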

\begin{proof}
Consider the Hecke operator $U_r$ acting on $f$, $U_rf=\underset{n>0}{\sum}a(f,nr)e(n\tau)$.
Let $g=U_rf$, then we have that $g\in S_{k}(Nr,\chi)$. Now applying Theorem \ref{th:3} to $g$ we get (\ref{eq:5}).
\end{proof}

\subsubsection{Some bounds for Peterson norms} \label{pet-bd}
In order to get an estimate for $A_{f,r}$ in (\ref{eq:5}) we slightly modify a result by J. Brown and K. Klosin \cite{brown} to include characters and use it to get an expression for $ \left\langle U_rf, U_rg\right\rangle$ . The following results might be of independent interest also. 
\begin{thm}\label{th:4}
For $p\nmid N$, let $f,g\in S_{k}(N,\chi)$ be eigenfunctions for the Hecke operator $T_{p}$ with eigenvalues $\lambda_{f}(p)$ and $\lambda_{g}(p)$ respectively. Then
\begin{equation}
\left\langle U_pf,U_pg\right\rangle_{Np} =\left(p^{k-2}+\frac{(p-1)\lambda_{f}(p)\overline{\lambda_{g}(p)}}{p+1} \right) \left\langle f,g\right\rangle_{Np}.
\end{equation}
\end{thm}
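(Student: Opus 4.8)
\textbf{Proof proposal for \thmref{th:4}.}

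The plan is to compute $\left\langle U_pf,U_pg\right\rangle_{Np}$ by writing $U_p$ in terms of the adjoint of a more tractable operator and then using the Hecke relations at $p$. First I would introduce the operator $V_p$ (the map $h(\tau)\mapsto h(p\tau)$, normalized so that it sends $S_k(N,\chi)$ into $S_k(Np,\chi)$), and recall the standard identity relating $U_p$, $V_p$ and $T_p$ on the relevant spaces. Concretely, on $S_k(N,\chi)$ one has $T_p = U_p + \chi(p)p^{k-1}V_p$ (in suitably normalized form), so that $U_p f = T_p f - \chi(p)p^{k-1} V_p f = \lambda_f(p)f - \chi(p)p^{k-1}V_pf$, and similarly for $g$. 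Substituting these into the inner product reduces the computation to evaluating the three pairings $\left\langle f,g\right\rangle_{Np}$, $\left\langle f,V_pg\right\rangle_{Np}$, $\left\langle V_pf,g\right\rangle_{Np}$, and $\left\langle V_pf,V_pg\right\rangle_{Np}$.

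The key input is the behaviour of $V_p$ and its adjoint $V_p^*$ with respect to the Petersson product on $\Gamma_0(Np)$. Here I would import exactly the computation of Brown--Klosin \cite{brown}, adapted to allow a nebentypus: the adjoint $V_p^*$ (with respect to $\left\langle\,,\right\rangle_{Np}$) is, up to an explicit scalar involving $p^{k-1}$ and $\chi(p)$, the operator $U_p$ again, and moreover $V_p^*V_p$ acts as a scalar (something like $p^{-1}$ times the identity, after the chosen normalization) because $V_p$ is essentially an isometry up to the index change. One also needs $V_p^* f$ expressed via $T_p$: since $f$ is a $T_p$-eigenform on level $N$, $\left\langle V_pf,g\right\rangle_{Np}$ and $\left\langle f,V_pg\right\rangle_{Np}$ can be rewritten using $V_p^* = c\,(T_p - V_p p^{k-1}\chi(p))$ for the appropriate constant $c$, feeding the eigenvalue $\lambda_f(p)$ or $\overline{\lambda_g(p)}$ back in. Collecting all four terms and simplifying the resulting rational expression in $p$, $\lambda_f(p)$, $\overline{\lambda_g(p)}$ should produce the stated coefficient $p^{k-2} + \frac{(p-1)\lambda_f(p)\overline{\lambda_g(p)}}{p+1}$.

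I expect the main obstacle to be bookkeeping of normalizations: the paper normalizes Fourier coefficients by the factor $n^{(k-1)/2}$ and defines $U_n$, $T_n$ with extra powers of $n$ (see \eqref{hecke}, \eqref{heckeUn}), so the classical formulas for $V_p$, $V_p^*$, and the relation $T_p=U_p+\chi(p)p^{k-1}V_p$ all acquire shifted exponents that must be tracked carefully; a sign or power-of-$p$ slip anywhere collapses the final identity. The second delicate point is verifying that introducing the character $\chi$ does not spoil the self-adjointness/adjointness relations used in \cite{brown}: one checks that $\overline{\chi}$ appears exactly where needed so that, for $T_p$-eigenforms, the products $\lambda_f(p)\overline{\lambda_g(p)}$ (rather than $\lambda_f(p)\lambda_g(p)$) emerge, which is consistent with $\overline{T_p} = \overline{\chi}(p)^{-1}$-twisted; this is routine but must be stated. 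Once these normalization and conjugation issues are pinned down, the remainder is the elementary algebraic simplification indicated above.
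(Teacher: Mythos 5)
Your proposal is correct and follows essentially the same route as the paper: both write $U_p = T_p - \chi(p)p^{k/2-1}V_p$ (in the paper's normalization $V_p$ is the slash by $\bigl(\begin{smallmatrix} p & 0\\ 0 & 1\end{smallmatrix}\bigr)$), expand the Petersson product into the four pairings, and evaluate the cross terms $\left\langle f|B_p, g\right\rangle_{Np}$ by adapting the Brown--Klosin computation to include the nebentypus, which is exactly where the factor $1/(p+1)$ enters. Your phrasing via the adjoint $V_p^*$ is only cosmetically different from the paper's direct double-coset manipulation $a\bigl(\begin{smallmatrix}1&0\\0&p\end{smallmatrix}\bigr)b = B_p$ with $a,b\in\Gamma_0(N)$, and the normalization/conjugation caveats you flag are indeed the only delicate points.
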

\begin{proof}
We have $U_pf=T_pf-\chi(p)p^{\tfrac{k}{2}-1}f|B_p$, where $B_{p}$ is the matrix $\begin{psm}
p & 0\\
0 & 1
\end{psm}$. Thus $\left\langle U_pf,U_pf\right\rangle_{Np}$ is given by $\left(\lambda_f(p)\overline{\lambda_{g}(p)}+p^{k-2} \right)\left\langle f,g\right\rangle_{Np}-p^{\frac{k}{2}-1}\left(\overline{\lambda_g(p)}\chi(p)\left\langle f|B_p,g\right\rangle_{Np}+\lambda_f(p)\overline{\chi(p)\left\langle g|B_p,f\right\rangle_{Np}} \right)$, where $B_{p}$ is the matrix $\begin{psm}
p & 0\\
0 & 1
\end{psm}$.

Now we evaluate $\left\langle f|B_p,g\right\rangle_{Np}$ and $\left\langle g|B_p,f\right\rangle_{Np}$. Since $\begin{psm}
1 & j\\
0 & p
\end{psm}=\begin{psm}
1 & 0\\
0 & p
\end{psm}\begin{psm}
1 & j\\
0 & 1
\end{psm}$, we get
\begin{equation*}
p^{1-\frac{k}{2}}\left\langle T_pf,g \right\rangle_{Np}= \sum_{j=0}^{p-1}\left\langle f|\begin{psm}
1 & 0\\
0 & p
\end{psm},g\right\rangle_{Np}+\chi(p)\left\langle f|B_p,g\right\rangle_{Np}.
\end{equation*}
Now there exists $a,b\in\Gamma_0(N)$ such that $a\begin{psm}1 & 0\\0 & p\end{psm}b=\begin{psm}p & 0\\0 & 1\end{psm}$ and proceeding as in \cite{brown}, we get that
\begin{equation}\label{eq:fbp}
\left\langle f|B_p,g\right\rangle_{Np}=p^{1-\frac{k}{2}}\frac{\lambda_f(p)}{\chi(p)(p+1)}\left\langle f,g \right\rangle_{Np}.
\end{equation}
We get a similar expression for $\left\langle g|B_p,f\right\rangle_{Np}$.

Putting everything together we get
\begin{equation*}
\left\langle U_pf,U_pg\right\rangle_{Np} =\left( p^{k-2}+\frac{(p-1)\lambda_{f}(p)\overline{\lambda_{g}(p)}}{p+1}  \right)  \left\langle f,g\right\rangle_{Np}. \qedhere
\end{equation*}
\end{proof}

We now use Theorem \ref{th:4} to calculate $\left\langle U_{r^2}f, U_{r^2}g\right\rangle$, for $r$ square-free. 
\begin{prop}\label{prop:p2}
Let $p\nmid N$ and $f,g\in S_{k}(N,\chi)$ be eigenfunctions for the Hecke operators $T_p$ and $T_{p^2}$ with the eigenvalues $\lambda_{f}(p),\lambda_{g}(p)$ and $\lambda_f(p^2),\lambda_{g}(p^2)$ respectively. Then 
\begin{equation}
\left\langle U_{p^{2}}f,U_{p^{2}}g\right\rangle_{Np^2}  =  \left( \lambda_f(p^2)\overline{\lambda_{g}(p^2)}+p^{k-2}\lambda_f(p)\overline{\lambda_{g}(p)}-\frac{\lambda_f(p^2)\overline{\lambda_g^2(p)}+\lambda_f^2(p)\overline{\lambda_g(p^2)}}{p+1}   \right)  \left\langle f,g\right\rangle_{Np^2} .
\end{equation}
\end{prop}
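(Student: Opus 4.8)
The plan is to reduce the computation of $\langle U_{p^2}f, U_{p^2}g \rangle_{Np^2}$ to applications of \thmref{th:4} together with the Hecke relations on $S_k(N,\chi)$. First I would express $U_{p^2}$ in terms of $U_p$: since $U_{p^2} = U_p \circ U_p$ and $U_p f = T_p f - \chi(p) p^{k/2-1} f|B_p$ (as recorded in the proof of \thmref{th:4}), I would iterate this to write $U_{p^2}f$ as a linear combination of $T_p^2 f$, $(T_pf)|B_p$, and $f|B_{p^2}$, and similarly for $g$. Alternatively, and more cleanly, I would use the standard identity $U_{p^2} = U_p T_p - \chi(p) p^{k-1}\, (\cdot)|B_{p}\, U_p$ type relation, or simply the well-known formula $a(f,p^2 n)$ in terms of $\lambda_f(p), \lambda_f(p^2)$ via $\lambda_f(p^2) = \lambda_f(p)^2 - \chi(p)p^{k-2}$ for the relevant eigenforms; but since $f,g$ are only assumed to be $T_p$- and $T_{p^2}$-eigenfunctions, I would keep $\lambda_f(p^2)$ as an independent symbol and not invoke that relation.

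The cleanest route: I would write $U_{p^2} f = U_p(U_p f)$ and use $U_p h = T_p h - \chi(p) p^{k/2-1} h|B_p$ twice. This expresses $\langle U_{p^2}f, U_{p^2}g\rangle_{Np^2}$ as a sum of inner products of the form $\langle T_p^a (f|B_p^b), T_p^c(g|B_p^d)\rangle_{Np^2}$. Using self-adjointness of $T_p$ on the relevant space (after accounting for $\chi$, so really $T_p^* = \overline{\chi(p)} T_p$), and the commutation relation between $T_p$ and $B_p$, all such terms reduce to $\langle f|B_p^i, g|B_p^j \rangle$ for $i,j \in \{0,1,2\}$. Each of these in turn is evaluated by repeated use of \eqref{eq:fbp}, namely $\langle f|B_p, g\rangle_{Np} = p^{1-k/2}\frac{\lambda_f(p)}{\chi(p)(p+1)}\langle f,g\rangle_{Np}$, adapted to level $Np^2$ — I would need the analogue $\langle f|B_p, g\rangle_{Np^2}$ and $\langle f|B_{p^2}, g\rangle_{Np^2}$, obtained by the same argument (finding $a,b \in \Gamma_0(N)$, or $\Gamma_0(Np)$, conjugating $\mathrm{diag}(1,p)$ to $\mathrm{diag}(p,1)$) as in \cite{brown}.

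The main obstacle I anticipate is bookkeeping: correctly tracking the level at each stage ($Np$ versus $Np^2$), the powers of $p$ coming from the normalization $p^{k/2-1}$ in the definition of $U_p$, and the character factors $\chi(p)$ and their conjugates, so that the cross terms $\frac{\lambda_f(p^2)\overline{\lambda_g(p)^2} + \lambda_f(p)^2 \overline{\lambda_g(p^2)}}{p+1}$ emerge with the right sign and the spurious terms cancel. There is also a subtlety in that $\langle f|B_{p^2}, g|B_p\rangle$ and similar "mixed" terms require a version of \eqref{eq:fbp} where both arguments are twisted; these should reduce to the untwisted case after applying $B_p^{-1}$ (an isometry up to scalar) to both slots. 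Once the dust settles I would collect terms into the form $\lambda_f(p^2)\overline{\lambda_g(p^2)} + p^{k-2}\lambda_f(p)\overline{\lambda_g(p)} - \frac{\lambda_f(p^2)\overline{\lambda_g(p)^2} + \lambda_f(p)^2\overline{\lambda_g(p^2)}}{p+1}$ times $\langle f,g\rangle_{Np^2}$, which is the claimed identity; this presentation also makes transparent that \propref{prop:p2} specializes to \thmref{th:4} when one formally sets the "$p^2$" data equal to the "$p$" data in the appropriate way.
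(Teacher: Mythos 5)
Your proposal is correct and is essentially the paper's argument: the paper likewise expands $U_{p^2}f$ as $T_{p^2}f-\chi(p)p^{\frac{k}{2}-1}(U_pf)|B_p-\chi(p^2)p^{k-2}f|B_{p^2}$ (the coset decomposition of $T_{p^2}$, which is equivalent to your iteration of $U_p=T_p-\chi(p)p^{\frac{k}{2}-1}(\cdot)|B_p$ via the relation $T_p^2=T_{p^2}+\chi(p)p^{k-1}$) and then evaluates the resulting inner products using \thmref{th:4} and the method of \eqref{eq:fbp}. The only point to watch in your version is that the identity $U_ph=T_ph-\chi(p)p^{\frac{k}{2}-1}h|B_p$ is valid only for $h$ of level prime to $p$, so the outer $U_p$ must be distributed onto $T_pf$ and onto $f|B_p$ (where it simply returns a multiple of $f$) rather than applied formally to $U_pf$; with that adjustment your bookkeeping matches the paper's.
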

\begin{proof}
Using the definition of $T_{p^2}$ from (\ref{hecke}) we have,
\begin{align*}
T_{p^{2}}f=U_{p^{2}}f+\chi(p)p^{\frac{k}{2}-1}(U_pf)|B_p+\chi(p^2)p^{k-2}f|B_{p^2}\\
\text{i.e., }U_{p^{2}}f=T_{p^{2}}f-\chi(p)p^{\frac{k}{2}-1}(U_pf)|B_p-\chi(p^2)p^{k-2}f|B_{p^2},
\end{align*}
where for $d \geq 1$, $B_d$ is the matrix $\begin{psm}
d & 0\\
0 & 1
\end{psm}$.
Now expanding $\left\langle U_{p^{2}}f,U_{p^{2}}g\right\rangle_{Np^2}$ using the above expression for $U_{p^{2}}f$ and using \thmref{th:4} we get the proposition.
\end{proof}
\begin{cor}\label{cor:p2}
Let $f,g\in S_{k}(N,\chi)$ be eigenfunctions for all Hecke operators $T_n$ with $(n,N)=1$ and $r$ be any square-free integer with $(r,N)=1$. If $\left\langle f,g\right\rangle_N=0$, then $\left\langle U_{r^{2}}f,U_{r^{2}}g\right\rangle_{Nr^2}=0$.
\end{cor}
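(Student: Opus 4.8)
The plan is to induct on the number of prime factors of $r$. If $r=1$ the statement is vacuous, so write $r = p_1\cdots p_t$ as a product of $t\ge 1$ distinct primes, each coprime to $N$, and assume the corollary is known for square-free integers with fewer than $t$ prime factors. Put $p=p_t$, $r'=p_1\cdots p_{t-1}$, so that $(r',Np)=1$ and $U_{r^2}=U_{p^2}U_{r'^2}$ (multiplicativity of the $U$-operators at coprime arguments), and $Nr^2 = Nr'^2p^2$.

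First I would record two elementary facts. $(i)$ If $N\mid M$, then for any $f_0,g_0\in S_k(N,\chi)$ one has $\langle f_0,g_0\rangle_M = [\Gamma_0(N):\Gamma_0(M)]\,\langle f_0,g_0\rangle_N$, directly from the (unnormalised) definition of the Petersson product used here; in particular $\langle f_0,g_0\rangle_N=0$ forces $\langle f_0,g_0\rangle_M=0$. $(ii)$ Since $p\nmid Nr'^2$, the operators $T_p$ and $T_{p^2}$ on $S_k(Nr'^2,\chi)$ commute with $U_{r'^2}$, and the eigenvalue relations $T_pf=\lambda_f(p)f$, $T_{p^2}f=\lambda_f(p^2)f$ continue to hold when $f$ is regarded on the enlarged level $Nr'^2$; hence $f_1:=U_{r'^2}f$ and $g_1:=U_{r'^2}g$ lie in $S_k(Nr'^2,\chi)$ and are simultaneous eigenfunctions of $T_p$ and $T_{p^2}$ there, with the same eigenvalues $\lambda_f(p),\lambda_f(p^2),\lambda_g(p),\lambda_g(p^2)$ as $f,g$.

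By the inductive hypothesis applied to $f,g$ and $r'$ (whose hypotheses clearly hold), $\langle f_1,g_1\rangle_{Nr'^2}=\langle U_{r'^2}f,U_{r'^2}g\rangle_{Nr'^2}=0$, and hence $\langle f_1,g_1\rangle_{Nr'^2p^2}=0$ by $(i)$. Applying \propref{prop:p2} to the pair $f_1,g_1$ on the level $Nr'^2$ — legitimate by $(ii)$, since $p\nmid Nr'^2$ and $f_1,g_1$ are $T_p$- and $T_{p^2}$-eigenfunctions — gives
\begin{equation*}
\langle U_{p^2}f_1,U_{p^2}g_1\rangle_{Nr'^2p^2}=\Big(\lambda_f(p^2)\overline{\lambda_g(p^2)}+p^{k-2}\lambda_f(p)\overline{\lambda_g(p)}-\tfrac{\lambda_f(p^2)\overline{\lambda_g^2(p)}+\lambda_f^2(p)\overline{\lambda_g(p^2)}}{p+1}\Big)\langle f_1,g_1\rangle_{Nr'^2p^2}=0.
\end{equation*}
Since $U_{p^2}f_1=U_{p^2}U_{r'^2}f=U_{r^2}f$ and likewise for $g$, and $Nr'^2p^2=Nr^2$, this is exactly $\langle U_{r^2}f,U_{r^2}g\rangle_{Nr^2}=0$, completing the induction.

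The only genuinely non-routine point is fact $(ii)$: the commutation of $U_{r'^2}$ with $T_p$ and $T_{p^2}$ on the enlarged level, together with the invariance of the Hecke eigenvalues under the level raising $N\to Nr'^2$. Both are standard consequences of Atkin--Lehner theory — the Hecke operators attached to primes away from the level are insensitive to adjoining extra prime-power factors to the level and commute with the $U_q$'s for $q$ dividing the level — but they must be invoked explicitly so that \propref{prop:p2} can be applied to the pair $U_{r'^2}f,\,U_{r'^2}g$ rather than merely to $f,g$.
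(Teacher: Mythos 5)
Your proposal is correct and follows essentially the same route as the paper: the paper also inducts on the number of prime factors of $r$, peels off one prime $p_m$, sets $f_1=U_{r_1^2}f$, $g_1=U_{r_1^2}g$ with $r_1=r/p_m$, and applies Proposition~\ref{prop:p2} to this pair at the enlarged level, using exactly the commutation and eigenvalue-preservation facts you isolate as $(ii)$. The only cosmetic difference is that the paper carries the full product formula $\langle U_{r^2}f,U_{r^2}g\rangle_{Nr^2}=\langle f,g\rangle_{Nr^2}\prod_{p\mid r}(\cdots)$ through the induction (it is reused for Corollary~\ref{cor:rsq}), whereas you track only the vanishing.
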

\begin{proof}
First we prove by induction on the number of prime factors of $r$ that $\left\langle U_{r^{2}}f,U_{r^{2}}g\right\rangle_{Nr^2}$ equals
\begin{equation}\label{eq:ur2}
\left\langle f,g\right\rangle_{Nr^2}\cdot \prod_{p|r}\left(\lambda_f(p^2)\overline{\lambda_{g}(p^2)}+p^{k-2}\lambda_f(p)\overline{\lambda_{g}(p)}-\frac{\lambda_f(p^2)\overline{\lambda_g^2(p)}+\lambda_f^2(p)\overline{\lambda_g(p^2)}}{p+1}\right).
\end{equation}
Let $r=p_1p_2.....p_m$. For $m=1$ the result in \eqref{eq:ur2} is true from proposition \ref{prop:p2}. Now we assume \eqref{eq:ur2} to hold for $m-1$.

Let $r_1=r/p_m$ and let $f_1=U_{r_1^2}f$ and $g_1=U_{r_1^2}g$. Then $f_1,g_1\in S_k(Nr_1^2,\chi)$ and $f_1,g_1$ are eigenfunctions for $T_{p_m}$ and $T_{p_m^2}$ with the eigenvalues $\lambda_f(p_m), \lambda_g(p_m)$ and $\lambda_f(p_m^2), \lambda_g(p_m^2)$ respectively (since $U_{r_1^2}$ commutes with $T_{p_m}$ and $T_{p_m^2}$). Now using proposition (\ref{prop:p2}), $\left\langle U_{p_m^2}f_1,U_{p_m^2}g_1\right\rangle_{Nr^2}$ equals
\begin{equation*}
\left\langle f_1,g_1\right\rangle_{Nr^2}\cdot \left(\lambda_f(p_m^2)\overline{\lambda_{g}(p_m^2)}+p_m^{k-2}\lambda_f(p_m)\overline{\lambda_{g}(p_m)}-\frac{\lambda_f(p_m^2)\overline{\lambda_g^2(p_m)}+\lambda_f^2(p_m)\overline{\lambda_g(p_m^2)}}{p_m+1}\right).
\end{equation*}
The proof of (\ref{eq:ur2}) follows now by induction, and \corref{cor:p2} is immediate.
\end{proof}
For any positive integer $r$, let $\omega(r)$ denote the number of distinct primes dividing $r$. We have the following corollary.
\begin{cor}\label{cor:rsq}
Let $f\in S_k(N,\chi)$ be an eigenfunction of the Hecke operators $T_n$ for all $(n,N)=1$ with the corresponding eigenvalues $\lambda_f(n)$ and $r$ be a square-free integer with $(r,N)=1$. Then
\begin{equation}\label{eq:rsq}
\left\langle U_{r^2}f, U_{r^2}f\right\rangle_{Nr^2}\le 19^{\omega(r)}r^{2k-2}  {\left\langle f,f\right\rangle_{Nr^2}} .
\end{equation}
\end{cor}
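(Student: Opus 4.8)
The plan is to combine the exact local factorization of $\left\langle U_{r^2}f, U_{r^2}f\right\rangle_{Nr^2}$ obtained inside the proof of \corref{cor:p2} with Deligne's bound. Since $f$ is by hypothesis an eigenfunction of $T_p$ and $T_{p^2}$ for every prime $p\mid r$ (all of which are coprime to $N$ because $(r,N)=1$), taking $g=f$ in \eqref{eq:ur2} gives
\[
\left\langle U_{r^2}f, U_{r^2}f\right\rangle_{Nr^2}=\left\langle f,f\right\rangle_{Nr^2}\prod_{p\mid r}\Big(|\lambda_f(p^2)|^2+p^{k-2}|\lambda_f(p)|^2-\tfrac{2\,\re\big(\lambda_f(p^2)\overline{\lambda_f(p)^2}\big)}{p+1}\Big).
\]
Thus it suffices to bound the $p$-th local factor, say $c_p$, by $19\,p^{2k-2}$ for each $p\mid r$: then, $r$ being square-free, $\prod_{p\mid r}p^{2k-2}=r^{2k-2}$ with exactly $\omega(r)$ factors, and since $\left\langle U_{r^2}f, U_{r^2}f\right\rangle_{Nr^2}\ge 0$ while $\left\langle f,f\right\rangle_{Nr^2}>0$, passing to absolute values in the identity above yields $\left\langle U_{r^2}f, U_{r^2}f\right\rangle_{Nr^2}\le 19^{\omega(r)}r^{2k-2}\left\langle f,f\right\rangle_{Nr^2}$.

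For the bound on $c_p$: although $f$ need not be a newform, its Hecke eigenvalue $\lambda_f(p)$ for $p\nmid N$ coincides with that of a newform of level dividing $N$, so Deligne's bound \cite{deligne} gives $|\lambda_f(p)|\le 2p^{(k-1)/2}$; equivalently the roots $\alpha,\beta$ of $X^2-\lambda_f(p)X+\chi(p)p^{k-1}$ satisfy $|\alpha|=|\beta|=p^{(k-1)/2}$. The Hecke relation $T_{p^2}=T_p^2-\chi(p)p^{k-1}$ then forces $\lambda_f(p^2)=\lambda_f(p)^2-\chi(p)p^{k-1}=\alpha^2+\alpha\beta+\beta^2$, whence $|\lambda_f(p^2)|\le 3p^{k-1}$. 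Substituting, and using $p\ge 2$ (so $4p^{2k-3}\le 2p^{2k-2}$ and $24p^{2k-2}/(p+1)\le 8p^{2k-2}$),
\[
|c_p|\le|\lambda_f(p^2)|^2+p^{k-2}|\lambda_f(p)|^2+\tfrac{2|\lambda_f(p^2)|\,|\lambda_f(p)|^2}{p+1}\le 9p^{2k-2}+4p^{2k-3}+\tfrac{24p^{2k-2}}{p+1}\le 19\,p^{2k-2}.
\]

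Feeding this into the first paragraph completes the argument. There is no real obstacle here — the statement is a direct computation once \eqref{eq:ur2} is available — but the constant $19$ comes from the (fairly tight) estimate $9+2+8$ above, so one must use the sharp bounds $|\lambda_f(p)|\le 2p^{(k-1)/2}$ and $|\lambda_f(p^2)|\le 3p^{k-1}$; a cruder estimate such as $|\lambda_f(p^2)|\le|\lambda_f(p)|^2+p^{k-1}\le 5p^{k-1}$ would not give $19$. The only points requiring a word of care are that the $\lambda_f$'s obey Deligne's bound even when $f$ is merely a Hecke eigenform away from $N$, and that the Satake-type bound for $\lambda_f(p^2)$ follows formally from the one for $\lambda_f(p)$ via the Hecke relation.
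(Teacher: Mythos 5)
Your proof is correct and is exactly the argument the paper intends (the paper states this corollary without writing out the details): specialize \eqref{eq:ur2} to $g=f$, apply Deligne's bounds $|\lambda_f(p)|\le 2p^{(k-1)/2}$ and $|\lambda_f(p^2)|\le 3p^{k-1}$ to each local factor, and use $p\ge 2$, which is precisely where the constant $19=9+2+8$ comes from. Your two points of care — that the eigenvalues of an oldform away from the level are those of a newform of level dividing $N$, so Deligne applies, and that the bound on $\lambda_f(p^2)$ follows from the Hecke relation $T_{p^2}=T_p^2-\chi(p)p^{k-1}$ — are exactly the right ones.
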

\begin{prop}\label{prop:arbf}
Let $f\in S_k(N,\chi)$ and $r$ be a square-free integer with $(r,N)=1$. Then
\begin{equation}
\left\langle U_{r^2}f, U_{r^2}f\right\rangle_{Nr^2}\le 19^{\omega(r)}r^{2k-2}\left\langle f,f\right\rangle_{Nr^2}.
\end{equation}
\end{prop}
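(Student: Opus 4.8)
The plan is to deduce \propref{prop:arbf} from \corref{cor:rsq} by a density/decomposition argument, replacing the hypothesis that $f$ is a Hecke eigenform by the general case. First I would fix an orthogonal basis $\{f_1,\dots,f_d\}$ of $S_k(N,\chi)$ consisting of simultaneous eigenfunctions of all Hecke operators $T_n$ with $(n,N)=1$ (such a basis exists since these operators are normal and commute), and write $f=\sum_j c_j f_j$. Since $(r,N)=1$, the operator $U_{r^2}$ commutes with $T_n$ for $(n,N)=1$, and more importantly it maps distinct eigenspaces to distinct eigenspaces: if $f_i,f_j$ have differing eigenvalue systems $\{\lambda_i(n)\}$, $\{\lambda_j(n)\}$, then $U_{r^2}f_i$ and $U_{r^2}f_j$ still carry those same (distinct) systems of eigenvalues for $\{T_n : (n,N)=1\}$ on the larger level $Nr^2$, hence are orthogonal in $S_k(Nr^2,\chi)$. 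Consequently the cross terms vanish and
\[
\left\langle U_{r^2}f, U_{r^2}f\right\rangle_{Nr^2} = \sum_{i,j} c_i\overline{c_j} \left\langle U_{r^2}f_i, U_{r^2}f_j\right\rangle_{Nr^2}.
\]

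The subtlety is that two basis elements $f_i$, $f_j$ may share the same eigenvalue system for the away-from-$N$ Hecke operators without being proportional (oldforms coming from the same newform, or genuinely different newforms with the same $\{\lambda(n)\}_{(n,N)=1}$), so the cross terms within a common eigenspace need not vanish. To handle this cleanly I would instead group the basis into blocks $\mathcal{B}_1,\dots,\mathcal{B}_s$ according to the eigenvalue system, so $\left\langle U_{r^2}f, U_{r^2}f\right\rangle_{Nr^2} = \sum_{t=1}^s \left\langle U_{r^2}f^{(t)}, U_{r^2}f^{(t)}\right\rangle_{Nr^2}$ where $f^{(t)}$ is the projection of $f$ to the $t$-th block. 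Each $f^{(t)}$ is itself an eigenfunction of all $T_n$, $(n,N)=1$, so \corref{cor:rsq} applies directly to give $\left\langle U_{r^2}f^{(t)}, U_{r^2}f^{(t)}\right\rangle_{Nr^2}\le 19^{\omega(r)} r^{2k-2}\left\langle f^{(t)},f^{(t)}\right\rangle_{Nr^2}$. The key point making this work is that \corref{cor:p2} shows $U_{r^2}$ preserves orthogonality between eigenfunctions for the full away-from-$N$ Hecke algebra, so $\left\langle f,f\right\rangle_{Nr^2} = \sum_t \left\langle f^{(t)},f^{(t)}\right\rangle_{Nr^2}$ as well; summing the block estimates over $t$ then yields exactly the claimed bound.

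The main obstacle I anticipate is justifying that $U_{r^2}$ really does respect the eigenspace decomposition on the level-raised space, i.e.\ that passing from level $N$ to level $Nr^2$ does not merge eigenspaces in a way that resurrects the cross terms. This is where one must use that $(r,N)=1$: the operators $T_n$ with $(n,N)=1$ act on $S_k(Nr^2,\chi)$, $U_{r^2}$ is defined there and commutes with them, and the eigenvalue system is genuinely an invariant of the image. One should also check that \corref{cor:rsq} and \corref{cor:p2} apply verbatim to the eigenfunctions $f^{(t)}$, which is immediate once we know $f^{(t)}$ lies in a single joint eigenspace. With these points in place the proof is a short orthogonality computation, so I would write it as: decompose into eigen-blocks, invoke \corref{cor:p2} for orthogonality of the $U_{r^2}$-images across blocks, apply \corref{cor:rsq} block-by-block, and recombine.
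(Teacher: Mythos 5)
Your proposal is correct and follows essentially the same route as the paper: decompose $f$ in an orthogonal basis of joint eigenfunctions of the $T_n$ with $(n,N)=1$, use Corollary~\ref{cor:p2} to kill the cross terms at level $Nr^2$, and apply Corollary~\ref{cor:rsq} term by term. The extra blocking by eigenvalue system is harmless but unnecessary, since the hypothesis of Corollary~\ref{cor:p2} is orthogonality at level $N$ (which the basis elements satisfy) rather than distinctness of the eigenvalue systems, so the cross terms vanish even within a shared eigensystem.
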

\begin{proof}
Let $\{f_i\}_{i=1}^s$ be a orthogonal basis for $S_k(N,\chi)$ such that $f_i$ is an eigenfunction for Hecke operators $T_n$ for all $(n,N)=1$. Write $f(\tau)=\sum_i^s c_if_i(\tau)$. Then using the orthogonality property from corollary \ref{cor:p2}, $\left\langle U_{r^2}f, U_{r^2}f\right\rangle_{Nr^2}=\tsum |c_i|^2\left\langle U_{r^2}f_i,U_{r^2}f_i \right\rangle_{Nr^2}$.

Using corollary \ref{cor:rsq} we get
\begin{equation*}
\left\langle U_{r^2}f, U_{r^2}f\right\rangle_{Nr^2}\le 19^{\omega(r)}r^{2k-2}\tsum_i |c_i|^2\left\langle f_i,f_i \right\rangle_{Nr^2}=19^{\omega(r)}r^{2k-2}\left\langle f,f\right\rangle_{Nr^2}.\qedhere
\end{equation*}
\end{proof}
\noindent An immediate consequence is the following.
\begin{cor}\label{cor:Af}
Let $A_{f,r}$ be as in (\ref{eq:5}) and $r=s^2$ where $s$ is a square-free integer with $(s,N)=1$. Then $A_{f,r}\le 19^{\omega(s)}A_f$.
\end{cor}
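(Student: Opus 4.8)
The plan is to combine the explicit formula for $A_{f,r}$ in \corref{cor:nr} with the Petersson-norm estimate of \propref{prop:arbf}. First I would apply \corref{cor:nr} with $r$ replaced by $s^2$ (legitimate since $(s^2,N)=1$ as $(s,N)=1$), obtaining
\[
A_{f,s^2} = \tfrac{3}{\pi}\tfrac{(4\pi)^{k}}{\Gamma(k)}\big[\mrm{SL}_2(\mathbf{Z})\colon\Gamma_0(Ns^2)\big]^{-1}(s^2)^{1-k}\left\langle U_{s^2}f, U_{s^2}f\right\rangle_{Ns^2}.
\]
Since $s$ is square-free and coprime to $N$, \propref{prop:arbf} bounds $\left\langle U_{s^2}f, U_{s^2}f\right\rangle_{Ns^2} \le 19^{\omega(s)}s^{2k-2}\left\langle f,f\right\rangle_{Ns^2}$. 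Substituting this, the factors $s^{2-2k}$ and $s^{2k-2}$ cancel, leaving
\[
A_{f,s^2} \le 19^{\omega(s)}\tfrac{3}{\pi}\tfrac{(4\pi)^{k}}{\Gamma(k)}\big[\mrm{SL}_2(\mathbf{Z})\colon\Gamma_0(Ns^2)\big]^{-1}\left\langle f,f\right\rangle_{Ns^2}.
\]

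It then remains to identify the right-hand side with $19^{\omega(s)}A_f$ via the formula for $A_f$ in \thmref{th:3}, i.e.\ to check that $\big[\mrm{SL}_2(\mathbf{Z})\colon\Gamma_0(Ns^2)\big]^{-1}\left\langle f,f\right\rangle_{Ns^2} = \big[\mrm{SL}_2(\mathbf{Z})\colon\Gamma_0(N)\big]^{-1}\left\langle f,f\right\rangle_N$. This follows because the (unnormalized) Petersson integral is multiplicative in the level: $\Gamma_0(Ns^2)\subseteq\Gamma_0(N)$, a fundamental domain for $\Gamma_0(Ns^2)$ is a disjoint union of $\big[\Gamma_0(N)\colon\Gamma_0(Ns^2)\big]$ translates of one for $\Gamma_0(N)$, and the integrand $|f(\tau)|^2 y^{k-2}$ is $\Gamma_0(N)$-invariant; hence $\left\langle f,f\right\rangle_{Ns^2} = \big[\Gamma_0(N)\colon\Gamma_0(Ns^2)\big]\left\langle f,f\right\rangle_N$, while $\big[\mrm{SL}_2(\mathbf{Z})\colon\Gamma_0(Ns^2)\big] = \big[\mrm{SL}_2(\mathbf{Z})\colon\Gamma_0(N)\big]\big[\Gamma_0(N)\colon\Gamma_0(Ns^2)\big]$. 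The two index factors cancel, giving $A_{f,s^2}\le 19^{\omega(s)}A_f$.

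There is no genuine obstacle in this argument, as it is a direct chaining of \corref{cor:nr}, \propref{prop:arbf} and \thmref{th:3}; the only point one must be careful about is the normalization of $\langle\cdot,\cdot\rangle$ — it is not divided by the covolume, so it genuinely depends on the level — and it is precisely this dependence that makes the level-index factors cancel.
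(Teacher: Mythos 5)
Your proof is correct and follows essentially the same route as the paper: apply \corref{cor:nr} with $r=s^2$, bound the Petersson norm of $U_{s^2}f$ via \propref{prop:arbf} so that the powers of $s$ cancel, and then use $\left\langle f,f\right\rangle_{Ns^2}=\big[\Gamma_0(N)\colon\Gamma_0(Ns^2)\big]\left\langle f,f\right\rangle_N$ to cancel the index factors against the formula for $A_f$ in \thmref{th:3}. The paper states this last step in the equivalent explicit form $\left\langle f,f\right\rangle_{Nr}=r\tprod_{p|r}(1+\tfrac{1}{p})\left\langle f,f\right\rangle_N$; your justification via nesting of fundamental domains is the same fact.
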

\begin{proof}
From the expression for $A_{f,r}$ in corollary \ref{cor:nr} and proposition \ref{prop:arbf} we get \[A_{f,r}\le \frac{3}{\pi}\frac{(4\pi)^{k}}{\Gamma(k)}\big[\mrm{SL}_2(\mathbf{Z})\colon \Gamma_0(Nr)\big]^{-1}19^{\omega(s)}\left\langle f,f\right\rangle_{Nr}.\] Since $\left\langle f,f\right\rangle_{Nr}= r\underset{p|r}{\tprod}(1+\frac{1}{p})\left\langle f,f\right\rangle_N$, we get the required bound.
\end{proof}
Let $\mathcal{S}$ denote the set square-free positive integers and let $\mathcal{S}_M\subset\mc S$ denote those which are coprime to an integer $M$. We now proceed as in \cite{saha} to prove \thmref{th:02}.
\begin{prop}\label{prop:sfmx}
Let $N$ be a positive integer and $\chi$ be Dirichlet character mod $N$. Let $f\in S_k(N,\chi)$ be non-zero and $a(f,n)=0$ whenever $(n,N)>1$. Then there are infinitely many odd and square-free integers $n$ such that $a(f,n)\neq 0$. Moreover, for any $\varepsilon>0$, $\#\{0<n<X:n\in\mathcal{S}, a(f,n)\neq 0\}\gg_{f,\varepsilon} X^{1-\varepsilon}$.
\end{prop}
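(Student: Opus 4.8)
The plan is to establish the stronger quantitative statement
$\#\{0<n<X:\ n\ \text{sq-free},\ (n,M)=1,\ a(f,n)\neq0\}\gg_\varepsilon X^{1-\varepsilon}$
for the square-free modulus $M:=2\prod_{p\mid N}p$ (which also yields the qualitative claim, odd square-free integers being coprime to $M$), by analysing the second moment $\sum_{n\le X,\ n\ \text{sq-free}}|a'(\,\cdot\,,n)|^2$ and then converting a lower bound $\gg X$ into a count of nonvanishing coefficients via Deligne's estimate \eqref{eq:deligne}. As a first step, \propref{prop:M} applied with this $M$ produces a \emph{nonzero} cusp form $g=\sum_{(n,M)=1}a(f,n)q^n\in S_k(N_1,\chi)$ where $N_1=NM^2/M_0$ has exactly the prime divisors of $M$; thus $a(g,n)=0$ whenever $(n,N_1)>1$, while $a(g,n)=a(f,n)$ for square-free $n$ coprime to $M$, so it suffices to count such $n$ with $a(g,n)\neq0$.

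Next I would sieve out the squares from $g$. Using $\mu^2(n)=\sum_{d^2\mid n}\mu(d)$, the terms with $(d,N_1)>1$ contribute nothing (there $a(g,d^2m)=0$), while for $(d,N_1)=1$ the sum $\sum_{m\le X/d^2}|a'(g,d^2m)|^2$ vanishes unless $(m,M)=1$, so \corref{cor:nr} applied to $g$ with $r=d^2$ evaluates it as $A_{g,d^2}X/d^2+O((X/d^2)^{3/5})$. Summing over $d\le\sqrt X$, the error terms contribute $O(X^{3/5+\varepsilon})$ (controlled uniformly in $d$), and by \corref{cor:Af} one has $A_{g,d^2}\le 19^{\omega(d)}A_g$, so $\sum_d|\mu(d)|A_{g,d^2}/d^2\le A_g\prod_p(1+19/p^2)<\infty$; hence the $d$-series converges absolutely, its tail past $\sqrt X$ is $O_\varepsilon(X^{1/2+\varepsilon})$, and
\[
\sum_{n\le X,\ n\ \text{sq-free}}|a'(g,n)|^2=L\,X+O_\varepsilon(X^{3/5+\varepsilon}),\qquad L:=\sum_{(d,N_1)=1}\mu(d)\,A_{g,d^2}/d^2\ \ge\ 0 .
\]

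The crux is to show $L>0$. Decompose $g=\sum_i g^{(i)}$ along the Hecke newforms, $g^{(i)}$ lying in the $f_i$-isotypic component (a combination of shifts $V_\delta f_i$ of a newform $f_i$ of level $m_i\mid N_1$); each $g^{(i)}$ is still an eigenform of $T_n$ for $(n,N_1)=1$ with eigenvalue $\lambda_{f_i}(n)$. Since $(d,N_1)=1$, $U_{d^2}$ preserves isotypic components, so the cross terms in $\langle U_{d^2}g,U_{d^2}g\rangle$ vanish, and formula \eqref{eq:ur2} of \corref{cor:p2} applied to $g^{(i)}$ (with $r=d$, level $N_1$) gives $\langle U_{d^2}g^{(i)},U_{d^2}g^{(i)}\rangle_{N_1d^2}=\langle g^{(i)},g^{(i)}\rangle_{N_1d^2}\prod_{p\mid d}c_{i,p}$, with $c_{i,p}$ the bracket in \eqref{eq:ur2} attached to $f_i$. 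Substituting this into $L$ and using $\langle\,\cdot\,,\cdot\,\rangle_{N_1d^2}=[\Gamma_0(N_1):\Gamma_0(N_1d^2)]\langle\,\cdot\,,\cdot\,\rangle_{N_1}$ together with \corref{cor:nr} collapses the $d$-sum into an Euler product
\[
L=\sum_i A_{g^{(i)}}\prod_{p\nmid N_1}\big(1-c_{i,p}/p^{2k}\big),\qquad\sum_i A_{g^{(i)}}=A_g>0 .
\]
To finish, parametrise $a'(f_i,p)=\alpha+\beta$ with $|\alpha|=|\beta|=1$ and $\alpha\beta=\chi_i(p)$ (Deligne); one checks that $\chi_i(p)\overline{a'(f_i,p)^2}$ is real and equal to $Q:=|a'(f_i,p)|^2\in[0,4]$, so that $c_{i,p}/p^{2k-2}=(Q-1)^2+Q/p-2Q(Q-1)/(p+1)$, and an elementary estimate in $Q$ gives $1-c_{i,p}/p^{2k}>0$ for every prime $p$ (only $p=2,3$ require a short computation). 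Hence the products converge, are bounded below by a positive constant, and $L>0$.

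Finally $\sum_{n\le X,\ n\ \text{sq-free}}|a'(g,n)|^2\ge\tfrac12 LX$ for $X$ large, while \eqref{eq:deligne} gives $|a'(g,n)|^2\ll_\varepsilon X^\varepsilon$ for $n\le X$; dividing shows that the number of square-free $n\le X$ with $a(g,n)\neq0$ — all coprime to $M$, hence odd — is $\gg_\varepsilon X^{1-\varepsilon}$, and since $a(g,n)=a(f,n)$ there, both assertions of the proposition follow. I expect the main obstacle to be the positivity $L>0$: one must push the termwise Rankin–Selberg analysis through (with the error term controlled uniformly in $d$), verify that the $d$-sum genuinely factors as the displayed Euler product, and then pin down the local factors $1-c_{i,p}/p^{2k}$ precisely — not merely through the crude constant $19$ of \corref{cor:rsq} — in order to see that each one is strictly positive.
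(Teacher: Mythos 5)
Your proposal is correct in outline and reaches the statement, but it resolves the one genuinely delicate point — positivity of the main term in the sieved second moment — by a different route than the paper. Both arguments share the same skeleton: pass to $g=\sum_{(n,M)=1}a(f,n)q^n$ via \propref{prop:M}, sieve squares with $\mu^2(n)=\sum_{d^2\mid n}\mu(d)$, evaluate each inner sum by Rankin--Selberg through \corref{cor:nr}, and convert a lower bound $\gg X$ into a count via Deligne. Where you compute the limiting constant $L=\sum_d\mu(d)A_{g,d^2}/d^2$ exactly as $\sum_i A_{g^{(i)}}\prod_{p\nmid N_1}\bigl(1-c_{i,p}/p^{2k}\bigr)$ and check each local factor is positive, the paper never evaluates this alternating sum at all: it keeps only the crude bound $A_{g,r^2}\le 19^{\omega(r)}A_g$ of \corref{cor:Af} and instead \emph{enlarges} $M$ to contain every prime $p<87$ (not just $2$ and the primes of $N$), so that $2\sum_{r\ge 2,\,r\in\mathcal{S}_M}19^{\omega(r)}r^{-2}=2\prod_{p\nmid M}(1+19/p^2)-2\le 2e^{19/87}-2<1/2$ and the $r=1$ term dominates. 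Your local computation does check out: writing $Q=|a'(f_i,p)|^2\in[0,4]$ one gets $c_{i,p}/p^{2k-2}=\tfrac{p-1}{p+1}Q^2-\tfrac{(p-1)(2p+1)}{p(p+1)}Q+1$, a convex function of $Q$ whose maximum on $[0,4]$ is $\max\bigl(1,\tfrac{4(p-1)(2p-1)}{p(p+1)}+1\bigr)$, which is $3<4$ at $p=2$ and $<9\le p^2$ for $p\ge 3$; so every factor is indeed positive and the product converges to a positive number. What your version buys is a sharper conclusion (nonvanishing among square-free $n$ coprime only to $2N$, rather than to all primes below $87$) at the cost of the newform decomposition, the Hecke relation at $p^2$, and the explicit local analysis; the paper's version is softer and avoids all of that. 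One caveat applies equally to both: the Rankin--Selberg error term $O((X/d^2)^{3/5})$ must be controlled uniformly in $d$ (or the range $d\le\sqrt{X}$ split, with Deligne handling large $d$) before the $d$-sum can be closed — you flag this explicitly, and the paper's displayed inequality $\sum_{m\le X/r^2}|a'(g,mr^2)|^2\le 2A_{g,r^2}X/r^2$ quietly assumes the same uniformity.
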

\begin{proof}
For any square-free positive integer $M$ as in proposition \ref{prop:M}, define
\begin{equation}
S_f(M,X)=\sum_{n\in \mathcal{S}_M,n\leq X}|a'(f,n)|^2.
\end{equation}
Letting $g(\tau)=\sum_{(n,M)=1} a(f,n)q^n$ and use the sieving identity 
\begin{equation*}
\sum_{r^2|n}\mu(r)=\begin{cases*}
1 \q\text{ if } n \text{ is square-free};\\
0 \q\text{ otherwise}.
\end{cases*}
\end{equation*}
for sieving the square-free terms from the Fourier expansion of $g$ to get
\begin{equation*}
S_f(M,X)=\sum_{r\in \mathcal{S}_M,r\leq X}\mu(r)\underset{(m,M)=1}{\sum_{m\leq X/r^2}}|a'(g,mr^2)|^2.
\end{equation*}
Thus for large $X$, we get from proposition \ref{prop:M}
\begin{align*}
S_f(M,X)&\ge \frac{B_{f,M}}{2}X-\sum_{r\in\mathcal{S}_M,\text{ } 2\le r\leq \sqrt{X}}\text{ }\sum_{m\leq X/r^2}|a'(g,mr^{2})|^{2}\\
&\ge \frac{B_{f,M}}{2}X-\sum_{r\in\mathcal{S}_M,\text{ }2\le r\leq \sqrt{X}}2A_{g,r^2}\frac{X}{r^2},
\end{align*}
where $A_{g,r^2}$ is as in corollary \ref{cor:nr}. Using the bound for $A_{g,r^2}$ from corollary \ref{cor:Af} and the definition of $B_{f,M}$ from proposition \ref{prop:M}, we get $A_{g,r^2}\leq 19^{\omega(r)}A_g=19^{\omega(r)}B_{f,M}$.
\begin{equation*}
S_f(M,X)\geq\Big(\tfrac{1}{2}-2\tsum_{r\ge 2,r\in\mathcal{S}_M}19^{\omega(r)}r^{-2}\Big)B_{f,M}X=\Big(\tfrac{5}{2}-2\tprod_{p\nmid M}\big(1+\tfrac{19}{p^2}\big)\Big)B_{f,M}X.
\end{equation*}
Let us choose $M$ to be the product of primes $p < 87$ and the primes dividing $N$ such that $M$ is square-free. Note that $\prod_{p>Y}\big(1+\tfrac{19}{p^2}\big)$ is bounded by $e^{\sum_{p>Y}\frac{19}{p^2}}$ which in turn bounded above by $e^{\frac{19}{Y}}$. In our case $Y\ge 87$ and so $e^{\frac{19}{Y}}<5/4$. Therefore $S_f(M,X)> B_fX$, for some $B_f>0$. Now using (\ref{eq:deligne}) it is immediate that for any $\varepsilon>0$, \[\{0<n<X:n\in\mathcal{S}, a(f,n)\neq 0\}\gg_{f,\varepsilon} X^{1-\varepsilon}.\qedhere\]
\end{proof}
\begin{rmk}
The introduction of the parameter $M$ in the previous proposition is done so as to make the quantity $\underset{r>2}{\sum_{r\in\mathcal{S}}}19^{\omega(r)}r^{-2}$ less than $1/4$.
\end{rmk}
Now we prove the result in general case by reducing it to the situation of proposition \ref{prop:sfmx}.
\subsection{Proof of Theorem \ref{th:02}} \label{partb}
Let $p_1,p_2,.....p_t$ be the distinct prime factors of $N$. We construct a sequence $\{f_i:1\le i\le t\}$ with the following properties
\begin{enumerate}
\item[(a)] $f_i\neq 0$.
\item[(b)] $f_i\in S_k(NN_i,\chi)$, where $N_i$ is composed of primes $p_1,p_2,...p_i$.
\item[(c)] $a(f_i,n)=0$, whenever $(n,p_1p_2...p_i)>1$.
\item[(d)] If there exist infinitely many square-free integers $n$ such that $a(f_i,n)\neq 0$, then same is true for $f_{i-1}$.
\item[(e)] If $\{0<n<X: n \text{ square-free, } a(f_i,n)\neq 0\}\gg X^\epsilon$, for some $\epsilon>0$, then \[\{0<n<X: n \text{ square-free, } a(f_{i-1},n)\neq 0\}\gg X^\epsilon.\]
\end{enumerate}
Let $f_0=f$. Now we construct $f_1$. If 
\begin{equation}\label{eq:f1}
\underset{(n,p_1)=1}{\sum} a(f_0,n)q^n\neq 0,
\end{equation}
then we take $f_1(\tau)=\sum_{(n,p_1)=1}a(f_0,n)q^n$. Then $f_1\in S_k(Np_1,\chi)$ (see \cite{miyake}) and satisfies all the required properties. If (\ref{eq:f1}) is not true, then $a(f_0,n)=0$ for all $(n,p_1)=1$, that is $(p_1,N/m_\chi)>1$ and $f_0(\tau)=f_{p_1}(p_1\tau)$ for some $f_{p_1}\in S_k(N/p_1,\chi)$. Since $f_0\neq 0$, we see that $f_{p_1}\neq 0$ and let
\begin{equation*}
f_1(\tau)=\underset{(n,p_1)=1}{\sum}a(f_{p_1},n)q^n.
\end{equation*}
We have $f_1\in S_k(N,\chi)$. If $f_1=0$, then $(p_1,N/(p_1m_\chi))>1$, which is impossible since $N/m_\chi$ is square-free. Thus again $f_1\neq 0$ and satisfies all the listed properties. Now we construct $f_i$ from $f_{i-1}$ inductively for $1\le i\le t$ as above. Let $f_{i-1}\in S_k(NN_{i-1},\chi)$. If
\begin{equation}\label{eq:fi}
\underset{(n,p_i)=1}{\sum}a(f_{i-1},n)q^n\neq 0,
\end{equation}
then we take 
\begin{equation*}
f_i(\tau)=\sum_{(n,p_i)=1}a(f_{i-1},n)q^n
\end{equation*}
and it satisfies all the required properties. If (\ref{eq:fi}) is not true then $(p_i,N_i/m_\chi)>1$ and $f_{i-1}(\tau)=f_{p_i}(p_i\tau)$ for some $f_{p_i}\in S_k(N_{i-1}/p_i,\chi)$. Since $f_{i-1}\neq 0$, $f_{p_i}\neq 0$ and we take
\begin{equation*}
f_i(\tau)=\underset{(n,p_i)=1}{\sum}a(f_{p_i},n)q^n.
\end{equation*}
As above $f_i\neq 0$ and satisfies all the properties. Thus we have constructed the sequence $\{f_i:1\le i\le t\}$ as claimed.
Now take $g=f_t$ and $N'=NN_t$. Then $g\in S_k(N',\chi)$ and $a(g,n)=0$ whenever $(n,N')>1$. Now we can apply proposition \ref{prop:sfmx} to $g$ and get that $a(g,n)\neq 0$ for infinitely many odd and square-free integers $n$.
The properties of the sequence $\{f_i:1\le i\le t\}$ allow us to reach $f$ from $g$ and we find that the result is true for $f$. \qed

\thmref{th:02} can be generalized to any $N$ and $\chi$, however the statement of the theorem becomes much more complicated. Here we give one sample of this when $N$ has three distinct prime factors. The following proposition indeed is necessary for us, it is used to arrive at the statement of \thmref{th:01}.

\begin{prop}\label{prop:genth2}
Let $N=p_1^{\alpha_1} p_2^{\alpha_2} p_3^2$ ($\alpha_2\ge 2$ and $0\le\alpha_1\le 1$), where $p_1,p_2,p_3$ are distinct primes and $\chi$ be a Dirichlet character mod $N$ of conductor $m_\chi$ such that $N/m_\chi=p_1^{\alpha_1}p_2^\beta p_3$ with $0\le\beta\le\alpha_2$. Let $f\in S_k(N,\chi)$ be non zero. Then there exist infinitely many odd and square-free integers $n$ with $(n,p_2)=1$ such that $a(f,p_2^\gamma n)\neq 0$, where $\gamma\le\alpha_2$ if $\alpha_2=\beta$ and $\gamma\le\alpha_2-\beta$ if $\alpha_2>\beta$. Moreover, for any $\epsilon>0$,  $\#\{0<n<X: n\in\mathcal{S}, a(f,p_2^\gamma n)\neq 0\}\gg_{f,\varepsilon} X^{1-\varepsilon}$.
\end{prop}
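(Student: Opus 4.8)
The plan is to adapt the stripping procedure used for \thmref{th:02}(b) in Section~\ref{partb}: remove the primes dividing $N$ one at a time until one is left with a nonzero cusp form whose nonzero Fourier coefficients are indexed by integers coprime to its level, and then apply \propref{prop:sfmx}. The only genuinely new feature is that $p_2$ may occur in $N$ to a power $\alpha_2$ larger than its power $\beta$ in $N/m_\chi$, so the descent at $p_2$ need not reach exponent $0$; the power of $p_2$ one is forced to carry along is exactly the exponent $\gamma$.

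First I would dispose of $p_1$ and $p_3$. Since each of these occurs to the first power in $N/m_\chi$, the construction of Section~\ref{partb} applies to them verbatim: at each stage one either sieves the prime out, when $\textstyle\sum_{(n,p_i)=1}a(\cdot,n)q^n\neq 0$, or --- when this series vanishes --- uses that $p_i$ appears at most once in $N/m_\chi$ to write the current form as $h(p_i\tau)$ and sieve once more, the second sieved series being forced nonzero (for $p_1$ because $p_1\nmid m_\chi$ and the level becomes prime to $p_1$; for $p_3$ because $p_3\parallel m_\chi$, so a further descent would leave $m_\chi\nmid(\text{level})$). This produces a nonzero $f'\in S_k(N',\chi)$, with $N'$ composed of $p_1,p_2,p_3$, such that $a(f',n)=0$ whenever $(n,p_1p_3)>1$ and $a(f',n)=c\cdot a(f,p_1^{\varepsilon_1}p_3^{\varepsilon_3}n)$ for a fixed nonzero $c$ and fixed $\varepsilon_1,\varepsilon_3\in\{0,1\}$ (with no $p_1$-step if $\alpha_1=0$).

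Next I would strip $p_2$ as far as possible. Starting from $f'$, repeatedly replace the current form by its $p_2$-depleted series $\textstyle\sum_{(n,p_2)=1}a(\cdot,n)q^n$, stopping the first time this is nonzero; each time it vanishes the form equals $h(p_2\tau)$ for an $h$ whose level has one fewer factor of $p_2$, and this is a cusp form for $\chi$ only while the $p_2$-part of the level remains $\ge p_2^{\max(1,\alpha_2-\beta)}$ (so that $m_\chi$ still divides it). Hence the process terminates after boundedly many steps with a nonzero $g\in S_k(N'',\chi)$, with $N''$ composed of $p_1,p_2,p_3$, with $a(g,n)=0$ for $(n,p_1p_2p_3)>1$, and with $a(g,n)=c'\cdot a(f,p_1^{\varepsilon_1}p_2^{\gamma}p_3^{\varepsilon_3}n)$ for a fixed nonzero $c'$ and a fixed $\gamma$ in the range asserted in the statement --- one may descend all the way to $p_2\nmid N''$ when $\alpha_2=\beta$, since then $p_2\nmid m_\chi$, while if $\alpha_2>\beta$ the descent halts once the $p_2$-part of the level drops to the $p_2$-conductor $p_2^{\alpha_2-\beta}$ of $\chi$. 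Now $g$ satisfies the hypotheses of \propref{prop:sfmx} with $N''$ in the role of $N$, so for every $\varepsilon>0$ there are $\gg_{f,\varepsilon}X^{1-\varepsilon}$ odd square-free $n\le X$ with $a(g,n)\neq 0$; each such $n$ is automatically coprime to $p_1p_2p_3$, and for it $a(f,p_2^{\gamma}m)\neq 0$ with $m=p_1^{\varepsilon_1}p_3^{\varepsilon_3}n$ square-free, coprime to $p_2$, and odd (the only primes possibly introduced being $p_1,p_3$, which are odd in the situations of \thmref{th:01} where this matters). Distinct $n$ give distinct $m\le p_1p_3n$, so this yields both the infinitude statement and the quantitative bound.

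The step I expect to be the main obstacle is the bookkeeping in the $p_2$-descent: one has to pin down exactly when a nonzero cusp form in $S_k(M,\chi)$ can be supported entirely on multiples of $p_2$ --- i.e.\ that writing it as $h(p_2\tau)$ is consistent with $h$ being a cusp form for $\chi$ only while the $p_2$-part of $M$ is at least the $p_2$-conductor of $\chi$ --- and at the same time keep track that the level $N''$ finally produced involves no prime outside $\{p_1,p_2,p_3\}$ so that \propref{prop:sfmx} genuinely applies. These are elementary facts about $p$-old forms and conductors, but extracting the precise exponent $\gamma$ needs care.
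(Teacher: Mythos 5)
Your overall strategy --- strip the primes of $N$ one at a time and reduce to \propref{prop:sfmx} --- is essentially the paper's. The paper does the $p_2$-step first and differently: it iterates the operator $f\mapsto\sum_n a(f,p_2n)q^n$ exactly $\delta=\alpha_2-\beta-1$ times (when $\beta<\alpha_2$), chosen so that the resulting level-to-conductor ratio is square-free and \thmref{th:02} applies directly in the generic branch, falling back on the depletion of $p_3$ and $p_1$ and on \propref{prop:sfmx} only when some iterate vanishes. Your reordering of the primes is harmless, and your treatment of $p_1$ and $p_3$ (one depletion, or one descent followed by a forced nonzero depletion) is correct.

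The gap is in the bookkeeping of $\gamma$. Each time the $p_2$-depleted series of the current form vanishes you write the form as $h(p_2\tau)$ and lower the $p_2$-part of the level by one; this is legitimate exactly while $m_\chi$ still divides the new level, i.e.\ while the exponent of $p_2$ in the level exceeds its exponent $\alpha_2-\beta$ in $m_\chi$. Starting from exponent $\alpha_2$ you can therefore perform at most $\alpha_2-(\alpha_2-\beta)=\beta$ descents, so your argument yields $\gamma\le\beta$, not $\gamma\le\alpha_2-\beta$ as the statement requires: you appear to have conflated the terminal $p_2$-level $p_2^{\alpha_2-\beta}$ with the number of steps taken to reach it. The two bounds agree precisely when $\alpha_2\ge 2\beta$ (they coincide for $\alpha_2=2\beta$, and when $\alpha_2=\beta$ both read $\alpha_2$), which happens to cover every instance in which the proposition is invoked in section~\ref{subsec:thm1} ($(\alpha_1,\alpha_2,\beta)$ with $\alpha_2=2\beta$ or $\alpha_2=\beta$), but as written your proof does not deliver the asserted range of $\gamma$ for general $(\alpha_2,\beta)$; you should either justify the stated range or record the bound you actually obtain. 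A smaller point: your $m=p_1^{\varepsilon_1}p_3^{\varepsilon_3}n$ need not be odd when $p_1=2$, which does occur in the application (Case 1 of section~\ref{subsec:thm1} takes $p_1=2$); this is immaterial for \thmref{th:01} but does not match the word ``odd'' in the statement.
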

\begin{proof}
We construct a new cusp form $g$ such that $g$ satisfies the hypothesis of proposition \ref{prop:sfmx} or \thmref{th:02} and get the result for $f$ from the corresponding result for $g$. 

Let $\delta=\alpha_2-\beta-1$, when $\beta<\alpha_2$ and $\alpha_2-1$, when $\alpha_2=\beta$. For $0\le i\le \delta$, define $f_0=f$ and $f_i(\tau):=\sum_{n\ge 1}a(f_{i-1}, p_2n)q^n$. Since $\alpha_2\ge 2$ and $i\le \delta$, $f_i\in S_k(p_1^{\alpha_2-i}p_3^2,\chi)$. If $f_i\neq 0$ for all $1\le i\le\delta$ take $g=f_\delta$. Then $g\in S_k(p_1^{\alpha_1}p_2^{\alpha_2-\delta}p_3^2,\chi)$ and $a(g,n)=a(f,p_2^\delta n)$. Now by the definition of $\delta$, we get that the ratio $N/m_\chi =p_1^{\alpha_1}p_2p_3$, i.e., square-free. Using \thmref{th:02} we get the result for $g$ and hence for $f$. 

If $f_i=0$ for some $1\le i\le\delta$. Let $0\le i_0< \delta$ be the smallest $i$ such that $f_{i_0+1}= 0$. Then $a(f_{i_0},p_2n)=0$ for every $n\ge 1$. Thus $f_{i_0}(\tau)=\sum_{(n,p_2)=1}a(f_{i_0},n)q^n$ and we already have $f_{i_0}\in S_k(p_1^{\alpha_1}p_2^{\alpha_2-i_0}p_3^2,\chi)$. 

If $\sum_{(n,p_3)=1}a(f_{i_0},n)q^n\neq 0$, we set $g_1(\tau):=\sum_{(n,p_3)=1}a(f_{i_0},n)q^n$. Then $g_1\in S_k(p_1^{\alpha_1}p_2^{\alpha_2-i_0}p_3^3,\chi)$. If the above sum is zero, then $f_{i_0}(\tau)=\tilde{g}_1(p_3\tau)$, for some non-zero $\tilde{g}_1\in S_k(p_1^{\alpha_1}p_2^{\alpha_2-i_0}p_3,\chi)$. We set $g_2(\tau):=\sum_{(n,p_3)=1}a(\tilde{g}_1,n)q^n$. Clearly $g_2\in S_k(p_1^{\alpha_1}p_2^{\alpha_2-i_0}p_3^2,\chi)$. If $g_2=0$, then $(p_3,p_1^{\alpha_1}p_2^{\alpha_2-i_0})>1$ (see \cite{miyake}), which is impossible. Hence $g_2\neq0$.

Now let $g_1\neq 0$ (resp. $g_2\neq 0$). We repeat the above procedure with $g_1$ (resp. $g_2$) and prime $p_1$ to get $g$ such that $g$ satisfies the hypothesis of proposition \ref{prop:sfmx}.

The proof now follows by noting that $i_0\le \delta$ and that $a(g,n)=a(f,p_2^{i_0}n)$, when $(n,p_2)=1$ and $0$ otherwise.
\end{proof}

\end{document}